\def\R {\mathbb{R}}
\def\CC {\mathbb{C}}
\def\N {\mathbb{N}}
\def\eps{\varepsilon}
\def\dist{{\rm dist}}
\newcommand{\wc}{\rightharpoonup}
\newcommand{\pa}{\partial}
\newcommand{\mf}[1]{\mathbf{#1}}
\DeclareMathOperator{\rad}{rad}
\newtheorem{proposition}{Proposition}[section]
\newtheorem{theorem}[proposition]{Theorem}
\newtheorem*{theorem*}{Theorem}
\newtheorem{lemma}[proposition]{Lemma}
\theoremstyle{definition}
\newtheorem{remark}[proposition]{Remark}
\numberwithin{equation}{section}
\title[Normalized solutions]{Normalized solutions for a system of coupled cubic Schr\"odinger equations on $\R^3$}
\author{Thomas Bartsch, Louis Jeanjean and Nicola Soave}
\address{
\hbox{\parbox{5.7in}{\medskip\noindent
Thomas Bartsch\\
Mathematisches Institut, Justus-Liebig-Universit\"at Giessen, \\
Arndtstrasse 2, 35392 Giessen (Germany),\\[2pt]
{\em{E-mail address: }}{\tt Thomas.Bartsch@math.uni-giessen.de.} \\ [5pt]
Louis Jeanjean\\
Laboratoire de Math\'ematiques (UMR 6623), Universit\'e de Franche-Comt\'e, \\
16, Route de Gray, 25030 Besan\c{c}on Cedex (France),\\
{\em{E-mail address: }}{\tt louis.jeanjean@univ-fcomte.fr.}\\[5pt]
Nicola Soave\\
Mathematisches Institut, Justus-Liebig-Universit\"at Giessen, \\
Arndtstrasse 2, 35392 Giessen (Germany),\\[2pt]
{\em{E-mail address: }}{\tt nicola.soave@gmail.com, Nicola.Soave@math.uni-giessen.de.}}}}
\thanks{{\em Acknowledgements}. Nicola Soave is partially supported through the project ERC Advanced Grant 2013 n. 339958 ``Complex Patterns for Strongly Interacting Dynamical Systems - COMPAT''. This work has been carried out in the framework of the project NONLOCAL (ANR-14-CE25-0013), funded by the French National Research Agency (ANR)}
\subjclass[2010]{35J50 (primary) and 35J15, 35J60 (secondary)	}
\begin{document}

\begin{abstract}
We consider the system of coupled elliptic equations
\[
\begin{cases}
-\Delta u - \lambda_1 u = \mu_1 u^3+ \beta u v^2 \\
-\Delta v- \lambda_2 v = \mu_2 v^3 +\beta u^2 v
\end{cases} \text{in $\R^3$},
\]
and study the existence of positive solutions satisfying the additional condition
\[
\int_{\R^3} u^2 = a_1^2 \quad \text{and} \quad \int_{\R^3} v^2 = a_2^2.
\]
Assuming that $a_1,a_2,\mu_1,\mu_2$ are positive fixed quantities, we prove existence results for different ranges of the coupling parameter $\beta>0$. The extension to systems with an arbitrary number of components is discussed, as well as the orbital stability of the corresponding standing waves for the related Schr\"odinger systems.
\end{abstract}

\maketitle

\section{Introduction}

This paper concerns the existence of solutions $(\lambda_1,\lambda_2,u,v) \in \R^2 \times H^1(\R^3,\R^2)$ to the system of elliptic equations
\begin{equation}\label{system}
\begin{cases}
-\Delta u - \lambda_1 u = \mu_1 u^3+ \beta u v^2 \\
-\Delta v- \lambda_2 v = \mu_2 v^3 +\beta u^2 v
\end{cases} \text{in $\R^3$},
\end{equation}
satisfying the additional condition
\begin{equation}\label{normalization}
\int_{\R^3} u^2 = a_1^2 \quad \text{and} \quad \int_{\R^3} v^2 = a_2^2.
\end{equation}
One refers to this type of solutions as to \emph{normalized solutions}, since \eqref{normalization} imposes a normalization on the $L^2$-masses of $u$ and $v$. This fact implies that $\lambda_1$ and $\lambda_2$ cannot be determined a priori, but are part of the unknown.

The problem under investigation comes from the research of solitary waves for the system of coupled Schr\"odinger equations
\begin{equation}\label{syst schrod}
\begin{cases}
- \iota \pa_t \Phi_1 = \Delta \Phi_1 + \mu_1 |\Phi_1|^2 \Phi_1+ \beta |\Phi_2|^2 \Phi_1 \\
- \iota \pa_t \Phi_2 = \Delta \Phi_2 + \mu_2 |\Phi_2|^2 \Phi_2+ \beta |\Phi_1|^2 \Phi_2
\end{cases} \text{in $\R \times \R^3$},
\end{equation}
having applications in nonlinear optics and in the Hartree-Fock approximation for Bose-Einstein condensates with multiple states; see \cite{esry-etal:1997,malomed:2008}.

It is well known that three quantities are conserved in time along trajectories of
\eqref{syst schrod}: the \emph{energy}
\begin{multline*}
J_{\CC}(\Phi_1,\Phi_2)= \int_{\R^3} \left(\frac{1}{2}|\nabla \Phi_1|^2 -\frac{\mu_1}{4} |\Phi_1|^4\right) \\
+ \int_{\R^3} \left(\frac{1}{2}|\nabla \Phi_2|^2 -\frac{\mu_2}{4} |\Phi_2|^4\right) - \frac{\beta}{2} \int_{\R^3} |\Phi_1|^2 |\Phi_2|^2,
\end{multline*}
and the \emph{masses}
\[
\int_{\R^3} |\Phi_1|^2 \quad \text{and} \quad \int_{\R^3} |\Phi_2|^2.
\]
A solitary wave of \eqref{syst schrod} is a solution having the form
\[
\Phi_1(t,x) = e^{-i\lambda_1 t} u(x) \quad \text{and} \quad \Phi_2(t,x) = e^{-i\lambda_2 t} v(x)
\]
for some $\lambda_1,\lambda_2 \in \R$, where $(u,v)$ solves \eqref{system}. Two different approaches are possible: one can either regard the frequencies $\lambda_1,\lambda_2$ as fixed, or include them in the unknown and prescribe the masses. In this latter case, which seems to be particularly interesting from the physical point of view, $\lambda_1$ and $\lambda_2$ appear as Lagrange multipliers with respect to the mass constraint.

The problem with fixed $\lambda_i$ has been widely investigated in the last ten years, and, at least for systems with $2$ components and existence of positive solutions (i.~e.\ $u,v>0$ in $\R^3$), the situation is quite well understood. A complete review of the available results in this context goes beyond the aim of this paper; we refer the interested reader to \cite{AmbrosettiColorado, Bartsch, BaDaWa, BaWa, BaWaWei, ChZo, LinWei, LinWei2, LiuWang, MaiaMontefuscoPellacci, Mand, SaWa, Sirakov, Soave, SoaveTavares, TerVer, WeiWeth} and to the references therein.

In striking contrast, very few papers deal with the existence of normalized solutions. Up to our knowledge, the only known results are the ones in \cite{BarJea,NoTaTeVeJEMS,NoTaVe2,TaTe}. In \cite{NoTaVe2}, the authors consider \eqref{system} in bounded domains of $\R^N$, or the problem with trapping potentials in the whole space $\R^N$ (the presence of a trapping potential makes the two problems essentially equivalent), with $N \le 3$. In both cases, they proved existence of positive solutions with small masses $a_1$ and $a_2$, and the orbital stability of the associated solitary waves, see Theorem~1.3 therein. It is remarkable that they can work essentially without assumptions on $\mu_1,\mu_2$ and $\beta$. The requirement that the masses have to be small gives their result a bifurcation flavor. In \cite{NoTaTeVeJEMS,TaTe} the authors consider the defocusing setting $\mu_1,\mu_2 <0$ in regime of competition $\beta<0$ in bounded domains. In the defocusing competitive case $\mu_1,\mu_2,\beta<0$ existence is an easy consequence of standard Lusternik-Schnirelmann theory because the functional is bounded from below. Supposing that all the components have the same mass, they prove existence of infinitely many solutions and occurrence of phase-separation as $\beta \to -\infty$. Concerning \cite{BarJea}, we postpone a discussion of the results therein in the following paragraphs.

In the present paper we address a situation which is substantially different compared to those considered in the papers \cite{NoTaTeVeJEMS,NoTaVe2,TaTe}. We study system \eqref{system} in $\R^3$ in the focusing setting $\mu_1,\mu_2>0$, so that the functional is unbounded from below on the constraint. We prove the existence of positive normalized solutions for different ranges of the coupling parameter $\beta>0$, without any assumption on the masses $a_1,a_2$. Our approach is variational: we find solutions of \eqref{system}-\eqref{normalization} as critical points of the energy functional
\begin{equation}\label{def energy}
J(u,v) = \int_{\R^3} \left(\frac12|\nabla u|^2 -\frac{\mu_1}{4} u^4\right)
          + \int_{\R^3} \left(\frac12|\nabla v|^2 -\frac{\mu_2}{4} v^4\right)
          - \frac{\beta}{2} \int_{\R^3} u^2 v^2,
\end{equation}
on the constraint $T_{a_1} \times T_{a_2}$, where for $a \in \R$ we define
\begin{equation}\label{def sphere}
T_a:=\left\{ u \in H^1(\R^3): \int_{\R^3} u^2 = a^2 \right\}.
\end{equation}
The main results are the following:

\begin{theorem}\label{thm: existence for beta small}
Let $a_1,a_2,\mu_1,\mu_2>0$ be fixed. There exists $\beta_1>0$ depending on $a_i$ and $\mu_i$ such that if $0<\beta < \beta_1$, then \eqref{system}-\eqref{normalization} has a solution $(\tilde \lambda_1,\tilde \lambda_2,\tilde u,\tilde v)$ such that $\tilde \lambda_1, \tilde \lambda_2<0$, and $\tilde u$ and $\tilde v$ are both positive and radial.
\end{theorem}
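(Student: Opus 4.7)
The plan is to obtain $(\tilde u,\tilde v)$ as a minimizer of $J$ on a Pohozaev-type submanifold inside $T_{a_1}\times T_{a_2}$, treating small $\beta>0$ as a perturbation of the decoupled problem at $\beta=0$. Throughout, I work in the radial class $H^1_{\rm rad}(\R^3,\R^2)$ so as to exploit the compact embedding $H^1_{\rm rad}\hookrightarrow L^p$ for $2<p<6$.

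\emph{Pohozaev geometry and minimax value.} For $(u,v)\in T_{a_1}\times T_{a_2}$, consider the $L^2$-preserving dilation $s\star(u,v):=(e^{3s/2}u(e^s\cdot),e^{3s/2}v(e^s\cdot))$; a direct computation gives
\[
J(s\star(u,v))=\tfrac{e^{2s}}{2}\bigl(\|\nabla u\|_2^2+\|\nabla v\|_2^2\bigr)-e^{3s}G(u,v),\quad G(u,v):=\tfrac{\mu_1}{4}\|u\|_4^4+\tfrac{\mu_2}{4}\|v\|_4^4+\tfrac{\beta}{2}\|uv\|_2^2.
\]
Since the cubic is $L^2$-supercritical in $\R^3$, this map has a unique maximum in $s$ and decays to $-\infty$. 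Define $\mathcal{P}_\beta:=\{(u,v)\in T_{a_1}\times T_{a_2}:\partial_s J(s\star(u,v))|_{s=0}=0\}$, which contains every critical point of $J|_{T_{a_1}\times T_{a_2}}$, and set $c_\beta:=\inf_{\mathcal{P}_\beta\cap H^1_{\rm rad}}J$. On $\mathcal{P}_\beta$ one has $J=\tfrac16(\|\nabla u\|_2^2+\|\nabla v\|_2^2)$, and a Gagliardo–Nirenberg lower bound yields $c_\beta>0$. At $\beta=0$ the system decouples and $c_0=m(a_1,\mu_1)+m(a_2,\mu_2)$, each $m(a_i,\mu_i)$ being the scalar mountain-pass level attained by the unique positive radial ground state $\bar u_i$ (Cazenave–Lions). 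Rescaling $(\bar u_1,\bar u_2)$ back onto $\mathcal{P}_\beta$ gives the test-pair bound $c_\beta\le c_0-\tfrac{\beta}{2}\|\bar u_1\bar u_2\|_2^2+O(\beta^2)$, so $c_\beta<c_0$ for small $\beta$.

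\emph{Compactness and passage to the limit.} Ekeland's principle on $\mathcal{P}_\beta\cap H^1_{\rm rad}$, combined with a Jeanjean-type deformation argument to preserve the Pohozaev constraint, produces a Palais–Smale sequence $(u_n,v_n)$ for $J|_{T_{a_1}\times T_{a_2}}$ at level $c_\beta$, automatically bounded in $H^1$ via $\|\nabla u_n\|_2^2+\|\nabla v_n\|_2^2=6c_\beta+o(1)$. Replacing $(u_n,v_n)$ by $(|u_n|^\ast,|v_n|^\ast)$ and rescaling back onto $\mathcal{P}_\beta$ does not increase the energy, by Pólya–Szegő on the kinetic part and Hardy–Littlewood on $\int u^2v^2$. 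Strauss compactness gives strong $L^4$-convergence to a weak limit $(\tilde u,\tilde v)\in H^1_{\rm rad}$; the Lagrange multipliers $\lambda_{i,n}$ extracted from the equations converge to some $\tilde\lambda_i$, and $(\tilde u,\tilde v,\tilde\lambda_1,\tilde\lambda_2)$ weakly solves \eqref{system} with $\tilde u,\tilde v\ge 0$. A standard argument (integrating $-\Delta\tilde u_i\ge0$ over balls in $\R^3$ and using $\tilde u_i\in H^1_{\rm rad}$) excludes $\tilde\lambda_i\ge0$, and the strong maximum principle upgrades nonnegativity to $\tilde u,\tilde v>0$.

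\emph{Preservation of mass and main obstacle.} The principal difficulty is compactness in the $L^2$-norm: Strauss yields strong $L^4$-convergence but not $L^2$-convergence, so the weak limit could shed $L^2$-mass. Once one knows $\tilde\lambda_i<0$, testing the equation against $u_n-\tilde u$ and using weak lower semicontinuity forces $\|\tilde u\|_2^2=a_1^2$ (and similarly for $v$), so no partial mass loss can occur. The remaining danger is a complete vanishing of one component in the weak limit, for which a Brezis–Lieb splitting at level $c_\beta$ would contradict the strict inequality $c_\beta<c_0$ provided $\beta$ is small enough in terms of $m(a_1,\mu_1),m(a_2,\mu_2)$ and $\|\bar u_1\bar u_2\|_2^2$; this quantitative comparison is exactly what defines the threshold $\beta_1$.
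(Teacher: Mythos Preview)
Your approach --- minimizing $J$ on the joint Pohozaev set $\mathcal{P}_\beta$ --- is precisely the one the paper uses for Theorem~\ref{thm: existence for beta large} (large $\beta$), and it does not go through for small $\beta$. The concrete error is the claim $c_0=m(a_1,\mu_1)+m(a_2,\mu_2)$: the set $\mathcal{P}_0$ is defined by a \emph{single} scalar constraint, not by the product of the two scalar Pohozaev constraints, so by taking $u=w_{a_1,\mu_1}$ and spreading $v$ out (keeping $\|v\|_2=a_2$ while $\|\nabla v\|_2\to0$) one sees that $\inf_{\mathcal{P}_\beta}J\le\min\{\ell(a_1,\mu_1),\ell(a_2,\mu_2)\}$ for every $\beta\ge0$. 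More fatally, your compactness argument misidentifies the obstruction level: if $\tilde v\equiv0$, radial $L^4$-compactness gives $\|v_n\|_4\to0$, and then the asymptotic Pohozaev identity forces $\|\nabla v_n\|_2\to0$ as well, so the limit level is $c_\beta=\ell(a_1,\mu_1)$ alone. No Brezis--Lieb splitting recovers an additional $\ell(a_2,\mu_2)$, because the vanishing component carries neither kinetic nor quartic energy in the limit. Hence the strict inequality one actually needs is $c_\beta<\min\{\ell(a_1,\mu_1),\ell(a_2,\mu_2)\}$, not $c_\beta<\ell(a_1,\mu_1)+\ell(a_2,\mu_2)$; the former is exactly the large-$\beta$ hypothesis.

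For small $\beta$ the paper proceeds in the opposite direction, via a two-dimensional linking. It builds a map $\gamma_0(t_1,t_2)=(t_1\star w_{a_1,\mu_1+\beta},\,t_2\star w_{a_2,\mu_2+\beta})$ on a rectangle $Q$ and shows, by a winding-number argument, that every admissible $\gamma$ must intersect the \emph{product} $\mathcal{P}(a_1,\mu_1+\beta)\times\mathcal{P}(a_2,\mu_2+\beta)$ of scalar Pohozaev sets. The threshold $\beta_1$ is defined so that $\ell(a_1,\mu_1+\beta)+\ell(a_2,\mu_2+\beta)>\max\{\ell(a_1,\mu_1),\ell(a_2,\mu_2)\}$, which forces the minimax value to satisfy $c>\max\{\ell(a_1,\mu_1),\ell(a_2,\mu_2)\}$; the vanishing scenario then yields $c=\ell(a_i,\mu_i)$, contradicting this \emph{lower} bound. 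The resulting critical point has Morse index~$2$ on the constraint and lies strictly above both scalar levels, whereas the ground state produced for large $\beta$ lies strictly below them.
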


For our next result we introduce a Pohozaev-type constraint as follows:
\begin{equation}\label{def constainA}
V:= \left\{ (u,v) \in T_{a_1} \times T_{a_2} : G(u,v) = 0 \right\},
\end{equation}
where
\[
G(u,v) = \int_{\R^3} \left( |\nabla u|^2 + |\nabla v|^2\right) - \frac{3}{4} \int_{\R^3} \left( \mu_1 u^4 + 2\beta u^2 v^2 +\mu_2 v^4 \right).
\]
We shall see that $V$ contains all solutions of \eqref{system}-\eqref{normalization}.

\begin{theorem}\label{thm: existence for beta large}
Let $a_1,a_2,\mu_1,\mu_2>0$ be fixed. There exists $\beta_2>0$ depending on $a_i$ and $\mu_i$ such that, if $\beta > \beta_2$, then \eqref{system}-\eqref{normalization} has a solution $(\bar \lambda_1,\bar \lambda_2,\bar u,\bar v)$ such that $\bar \lambda_1, \bar \lambda_2<0$, and $\bar u$ and $\bar v$ are both positive and radial. Moreover, $(\bar \lambda_1,\bar \lambda_2,\bar u,\bar v)$ is a ground state solution in the sense that
\[
\begin{aligned}
J(\bar u,\bar v)
 &= \inf\{J(u,v): (u,v)\in V\}\\
 &= \inf\left\{J(u,v):\text{$(u,v)$ is a solution of \eqref{system}-\eqref{normalization} for
     some $\lambda_1,\lambda_2$} \right\}
\end{aligned}
\]
holds.
\end{theorem}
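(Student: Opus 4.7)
The plan is to realize $(\bar u, \bar v)$ as a minimizer of $J$ restricted to radially symmetric pairs on the Pohozaev-type set $V$, and then to invoke a natural-constraint argument to conclude that such a minimizer is a critical point of $J$ on $T_{a_1}\times T_{a_2}$. The basic tool is the $L^2$-preserving dilation $(u_t,v_t)(x):=t^{3/2}(u(tx),v(tx))$. Setting $A:=\int_{\R^3}(|\nabla u|^2+|\nabla v|^2)$ and $B:=\tfrac14\int_{\R^3}(\mu_1u^4+2\beta u^2v^2+\mu_2v^4)$, one computes
\[
J(u_t,v_t)=\tfrac{t^2}{2}A-t^3B, \qquad G(u_t,v_t)=t^2A-3t^3B,
\]
so every dilation fiber meets $V$ at the unique point $t^*=A/(3B)$, which is also the unique strict maximum of $t\mapsto J(u_t,v_t)$. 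Hence $V$ is a smooth submanifold and the relation $G(u,v)=0$ yields the key identity $J(u,v)=\tfrac16(\|\nabla u\|_2^2+\|\nabla v\|_2^2)>0$. Consequently $c_V:=\inf\{J(u,v):(u,v)\in V,\ u,v\in H^1_{\mathrm{rad}}(\R^3)\}$ is nonnegative and every minimizing sequence for $c_V$ is $H^1$-bounded.

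Next I would establish that $V$ is a natural constraint. Ekeland applied to the $C^1$ manifold $V\cap T_{a_1}\times T_{a_2}$ (in the radial setting) produces a Palais--Smale sequence $(u_n,v_n)$ for $J|_{T_{a_1}\times T_{a_2}}$ at level $c_V$ with bounded mass-constraint multipliers $\lambda_n^{(1)},\lambda_n^{(2)}$: one writes the full Lagrange identity with an extra multiplier $\nu_n$ for $G$, tests it against the infinitesimal dilation direction $\bigl(\tfrac32 u_n+x\cdot\nabla u_n,\ \tfrac32 v_n+x\cdot\nabla v_n\bigr)$, and uses $G(u_n,v_n)=0$ to force $\nu_n\to0$. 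The compact embedding $H^1_{\mathrm{rad}}(\R^3)\hookrightarrow L^p(\R^3)$ for $p\in(2,6)$ then gives a subsequence with $(u_n,v_n)\wc(\bar u,\bar v)$ weakly in $H^1$ and strongly in $L^4$, and the limit satisfies \eqref{system} for some $\bar\lambda_1,\bar\lambda_2\in\R$.

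The main obstacle is to exclude semi-trivial weak limits and $L^2$-mass loss; this is exactly where the assumption $\beta>\beta_2$ enters. From the fiber formula, $c_V=\inf_{T_{a_1}\times T_{a_2}}A^3/(54B^2)$, and evaluating on any fixed pair with $\int u^2v^2>0$ gives $c_V\le C\beta^{-2}\to 0$ as $\beta\to\infty$. Conversely, if a weak limit were semi-trivial, say $\bar v\equiv 0$, then $L^4$-compactness forces $\int u_n^2v_n^2\to0$ and $\int v_n^4\to 0$, so the Pohozaev identity $G=0$ drives the $u$-component into the scalar normalized Pohozaev minimization associated with $-\Delta u-\lambda u=\mu_1 u^3$, $\|u\|_2=a_1$, whose infimum $c^*_{a_1,\mu_1}>0$ depends only on $a_1,\mu_1$; the analogous lower bound holds when $\bar u\equiv 0$. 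Choosing $\beta_2$ large enough that $c_V<\min\{c^*_{a_1,\mu_1},c^*_{a_2,\mu_2}\}$ rules out both scenarios, giving $\bar u,\bar v\not\equiv 0$. A Brezis--Lieb type argument combined with the limit equation then upgrades the weak convergence to strong $H^1$ convergence and yields $\|\bar u\|_2=a_1$, $\|\bar v\|_2=a_2$, $(\bar u,\bar v)\in V$ and $J(\bar u,\bar v)=c_V$. Positivity of $\bar u,\bar v$ follows by replacing $(u,v)$ with $(|u|,|v|)$ throughout and applying the strong maximum principle (using $\beta>0$), while testing the limit equations against $\bar u$ and $\bar v$ and combining with $G(\bar u,\bar v)=0$ forces $\bar\lambda_i<0$. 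Finally, any solution of \eqref{system}-\eqref{normalization} satisfies the Pohozaev identity via the usual scaling argument and hence lies in $V$, so $c_V$ lower bounds the energy of every solution; since $(\bar u,\bar v)$ attains it, the ground-state identities follow.
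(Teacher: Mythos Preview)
Your overall architecture is close to the paper's: the paper also identifies the critical level with $\inf_{V_{\rad}} J$ and rules out semi-trivial limits by the energy comparison $c_V<\min\{\ell(a_1,\mu_1),\ell(a_2,\mu_2)\}$ for $\beta$ large. Whether one reaches this level via a mountain pass (as in the paper) or via direct minimization on the Pohozaev set plus Ekeland (as you do) is largely a matter of taste.

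However, there is a genuine gap in your compactness step. After excluding $\bar u\equiv 0$ and $\bar v\equiv 0$, you write that ``a Brezis--Lieb type argument combined with the limit equation'' gives strong $H^1$ convergence and $\|\bar u\|_2=a_1$, $\|\bar v\|_2=a_2$, and that ``testing the limit equations against $\bar u$ and $\bar v$ and combining with $G(\bar u,\bar v)=0$ forces $\bar\lambda_i<0$''. Neither of these is justified. The Brezis--Lieb decomposition together with $G(u_n,v_n)=0$ and $G(\bar u,\bar v)=0$ (the latter from the Pohozaev identity for the limit system) indeed yields $\|\nabla(u_n-\bar u)\|_2^2+\|\nabla(v_n-\bar v)\|_2^2\to 0$, i.e.\ strong $\dot H^1$ convergence; but this does \emph{not} imply $L^2$ convergence, since spreading sequences of the form $n^{-3/2}\phi(\cdot/n)$ tend to $0$ in $\dot H^1\cap L^4$ while keeping fixed $L^2$ mass. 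And testing the two equations and summing, using $G=0$, only gives $\bar\lambda_1\|\bar u\|_2^2+\bar\lambda_2\|\bar v\|_2^2=-\tfrac13(\|\nabla\bar u\|_2^2+\|\nabla\bar v\|_2^2)<0$, hence only that \emph{at least one} $\bar\lambda_i$ is negative; there is no Pohozaev identity for each component separately in a coupled system, so you cannot conclude that both are negative from these relations alone.

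The paper closes exactly this gap with an extra ingredient you are missing: a Liouville-type lemma (Lemma~A.2 in \cite{Ikoma}) stating that a nonnegative $H^1$ function $w$ on $\R^3$ with $-\Delta w\ge 0$ must vanish identically. Once one knows, say, $\bar\lambda_1<0$ (hence $u_n\to\bar u$ strongly in $H^1$ by the equation), one argues by contradiction: if $\bar\lambda_2\ge 0$ then $-\Delta\bar v=\bar\lambda_2\bar v+\mu_2\bar v^3+\beta\bar u^2\bar v\ge 0$, so Ikoma's lemma forces $\bar v\equiv 0$, contradicting the level comparison. Thus $\bar\lambda_2<0$ as well, and strong $H^1$ convergence (in particular full $L^2$ mass) follows for both components. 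You should insert this Liouville step; without it the recovery of the normalization \eqref{normalization} is not established.

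A smaller point: your $c_V$ is defined over radial pairs, so the final ground-state identity $J(\bar u,\bar v)=\inf_V J$ over the \emph{full} $V$ requires the additional fact $\inf_V J=\inf_{V_{\rad}} J$. The paper proves this by Schwarz symmetrization (their Lemma~4.8); you should add the corresponding argument.
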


\begin{remark} a) The values of $\beta_1$ and $\beta_2$ can be explicitly estimated; see \eqref{def_beta_1} and Remark \ref{rem: estimate beta_2} below. In particular, we point out that they are not obtained by means of any limit process, so that one should not think that $\beta_1$ is very small and $\beta_2$ is very large. For instance, if $\mu_1=\mu_2$ and $a_1\le a_2$, then the proof of Theorem \ref{thm: existence for beta small} works for
\[
\beta_1=\mu_1\left(\sqrt{1+\frac{a_1^2}{a_2^2}}-1\right).
\]
Nevertheless, it remains an open problem to obtain sharp bounds.

b) The variational characterizations of the solutions obtained in Theorems 1.1 and 1.2 are different. The solution from Theorem~\ref{thm: existence for beta small} has Morse index 2 as critical point of $J$ constrained to $T_{a_1}\times T_{a_2}$. On the other hand, the solution from Theorem~\ref{thm: existence for beta large} is a mountain pass solution of $J$ on the constraint. It can also be obtained as a minimizer of $J$ on the Pohozaev-type submanifold of the constraint.

c) Our results can be extended with minor changes to systems with general exponents of type
\begin{equation}\label{power-type sys}
\begin{cases}
-\Delta u_1 -\lambda_1 u_1 = \mu_1 |u_1|^{2p_1-2} u_1 +  \beta |u_1|^{r-2} |u_2|^{r} u_1 \\
-\Delta u_2 -\lambda_2 u_2 = \mu_2 |u_2|^{2p_2-2} u_2 + \beta |u_1|^{r} |u_2|^{r-2} u_2
\end{cases} \qquad \text{in $\R^N$}
\end{equation}
(or the $k$ components analogue) with $N \le 4$, provided we restrict ourselves to a \emph{$L^2$-supercritical} and \emph{Sobolev subcritical} setting:
\[
2+\frac{4}{N}< 2p_i,2r < \frac{2N}{N-2}.
\]
Moreover, the proofs do not use the evenness of the functional. Thus one may replace the terms $u^4$, $v^4$ in \eqref{def energy} by general nonlinearities $f(u),g(v)$ which are not odd. Similarly the coupling term $u^2v^2$ in the functional may be replaced by a nonsymmetric one. We decided not to include this kind of generality since it would make the statement of our results and the proofs very technical.

d) Also in the case of fixed frequencies for system \eqref{system} there exist values $0<\beta_1'<\beta_2'$ such that the problem has a positive solution whenever $\beta<\beta_1'$ or $\beta>\beta_2'$ \cite{AmbrosettiColorado,  Sirakov}, see also \cite{MaiaMontefuscoPellacci}. In this setting, it is known that if $\lambda_1 \ge \lambda_2$, $\mu_1 \ge \mu_2$, and one of the inequalities is strict, then $\beta_1'<\beta_2'$, and for $\beta \in [\beta_1',\beta_2']$ the problem has no positive solution \cite{BaWa, Sirakov}. On the other hand, the non-existence range (in terms of $\beta$) can disappear. This is the case, for instance, if $\lambda_1=\lambda_2=\mu_1=\mu_2=1$. Then \eqref{system} has positive solutions for all $\beta>0$. Since in the context of normalized solutions the values $\lambda_i$ are not prescribed, it is an interesting open problem whether there are conditions on $a_1,a_2,\mu_1,\mu_2$ such that positive solutions of \eqref{system}-\eqref{normalization} exist for all $\beta>0$.

e) Despite the similarity between our results and those in \cite{AmbrosettiColorado, Sirakov}, the proofs differ substantially. First, while in  \cite{AmbrosettiColorado, Sirakov} the approach is based on the research of critical points constrained on Nehari-type sets associated to the problem, here no Nehari manifold is available, since $\lambda_1$ and $\lambda_2$ are part of the unknown; as a consequence, we shall directly investigate the geometry of the functional on the product of the $L^2$-spheres $T_{a_1} \times T_{a_2}$ in order to apply a suitable minimax theorem. We also point out that in \cite{AmbrosettiColorado, Sirakov}, as well as in all the contributions related to the problem with fixed frequencies, one of the main difficulties is represented by the fact that one search for solutions having both $u \not \equiv 0$ and $v \not \equiv 0$. Here this problem is still present, and actually it assumes a more subtle form, in the following sense: let us suppose that we can find a Palais-Smale sequence for $J$ on $T_{a_1} \times T_{a_2}$, and suppose that this sequence is weakly convergent in $H^1$ to a limit $(u,v)$. Due to the lack of compactness of the embedding $H^1(\R^3) \hookrightarrow L^2(\R^3)$, a delicate step consists in showing that $(u,v) \in T_{a_1} \times T_{a_2}$, so that it satisfies \eqref{normalization}. Notice that the lack of compactness persists also if we restrict ourselves to a radial setting. As a consequence, we emphasize that it is not sufficient to rule out the possibility that in the weak limit $u \equiv 0$ or $v \equiv 0$. We have also to prevent the loss of part of the mass of one of the components in the passage to the limit.  
\end{remark}

Both theorems rest upon a suitable minimax argument, where an important role is played by the \emph{ground state levels} $\ell(a_1,\mu_1)$ and $\ell(a_2,\mu_2)$ associated to the scalar problems
\[
\begin{cases}
-\Delta w - \lambda w= \mu w^3 & \text{in $\R^3$}  \\
\int_{\R^3} w^2 = a^2
\end{cases}
\]
with $a=a_1$ and $\mu=\mu_1$, or with $a=a_2$ and $\mu=\mu_2$, respectively.
We refer to Section \ref{sec: preliminaries} for the precise definition of $\ell(a,\mu)$. In this perspective, it is interesting to emphasize the different relations between the critical values of Theorems \ref{thm: existence for beta small} and \ref{thm: existence for beta large} with $\ell(a_1,\mu_1)$ and $\ell(a_2,\mu_2)$.

\begin{proposition}
With the notation of Theorems \ref{thm: existence for beta small} and \ref{thm: existence for beta large}, we have
\[
J(\bar u, \bar v) < \min\{\ell(a_1,\mu_1), \ell(a_2,\mu_2)\} \le \max \{\ell(a_1,\mu_1), \ell(a_2,\mu_2)\} < J(\tilde u,\tilde v).
\]
\end{proposition}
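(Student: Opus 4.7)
Both bounds rest on the fact that every solution of \eqref{system}-\eqref{normalization} satisfies $J(u,v) = \tfrac{1}{6}(\|\nabla u\|_2^2 + \|\nabla v\|_2^2)$, a consequence of the Pohozaev identity $G(u,v)=0$ combined with the Nehari identities obtained by testing the equations against $u$ and $v$. Applied to the scalar ground state $w_i \in T_{a_i}$, which satisfies $\|\nabla w_i\|_2^2 = \tfrac{3\mu_i}{4}\|w_i\|_4^4$, this gives $\ell(a_i,\mu_i) = \tfrac{1}{6}\|\nabla w_i\|_2^2$.

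For the upper bound I use $J(\bar u,\bar v) = \inf_V J$ and supply a test element of $V$. Apply the mass-preserving joint scaling $u^s(x) := s^{3/2}u(sx)$ to the pair $(w_1,w_2)$, and pick $s = s^*$ so that $((w_1)^{s^*},(w_2)^{s^*}) \in V$. A direct computation gives $J((w_1)^{s^*}, (w_2)^{s^*}) = \tfrac{8A^3}{27B^2}$, where $A = \|\nabla w_1\|_2^2 + \|\nabla w_2\|_2^2$ and, using the scalar Pohozaev identity, $B = \tfrac{4A}{3} + 2\beta\int w_1^2 w_2^2$. This quantity is strictly decreasing in $\beta$, equals $\ell(a_1,\mu_1)+\ell(a_2,\mu_2)$ at $\beta=0$, and drops below $\min\{\ell(a_1,\mu_1),\ell(a_2,\mu_2)\}$ once $\beta$ exceeds an explicit threshold depending only on $a_i,\mu_i$. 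Enlarging $\beta_2$ in Theorem~\ref{thm: existence for beta large} if necessary (permitted, since $\beta_2$ depends only on $a_i,\mu_i$), the hypothesis $\beta > \beta_2$ guarantees the inequality.

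For the lower bound, from the minimax construction of $(\tilde u, \tilde v)$ in Theorem~\ref{thm: existence for beta small} (the two-parameter Derrick-scaling structure that realises its Morse index $2$) one has
\[
J(\tilde u,\tilde v) = \sup_{s_1,s_2>0} J(\tilde u^{s_1}, \tilde v^{s_2}),
\]
with the supremum attained at $(s_1,s_2)=(1,1)$. As $s_2 \to 0^+$, both $J_2(\tilde v^{s_2})$ and the coupling term $\tfrac{\beta}{2}\int (\tilde u^{s_1})^2 (\tilde v^{s_2})^2$ tend to $0$, so $J(\tilde u^{s_1}, \tilde v^{s_2}) \to J_1(\tilde u^{s_1})$, where $J_1$ is the scalar functional with parameters $a_1,\mu_1$. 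Hence
\[
J(\tilde u,\tilde v) \;\ge\; \sup_{s_1>0} J_1(\tilde u^{s_1}) \;\ge\; \ell(a_1,\mu_1),
\]
the last step being the scalar variational characterization of $\ell$. Equality in the last step would force $\tilde u$ to be the scalar ground state $w_1$; plugging $\tilde u = w_1$ into the first equation of \eqref{system} gives $(\lambda_1 - \lambda_1^{\mathrm{sc}})w_1 = \beta w_1 \tilde v^2$, and since $w_1 > 0$ on $\R^3$ this would force $\tilde v^2$ to be a positive constant on $\R^3$, incompatible with $\tilde v \in T_{a_2}$. Therefore $\tilde u \neq w_1$, the inequality is strict, and $J(\tilde u,\tilde v) > \ell(a_1,\mu_1)$; symmetrically $> \ell(a_2,\mu_2)$.

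The main obstacle is the identification $J(\tilde u,\tilde v) = \sup_{s_1,s_2>0} J(\tilde u^{s_1},\tilde v^{s_2})$ with supremum attained at $(1,1)$, equivalently the assertion that the Morse index $2$ of $(\tilde u,\tilde v)$ is realised precisely by the two independent component scalings. This is the substantive content of the minimax construction in the proof of Theorem~\ref{thm: existence for beta small}; once it is in hand, the proposition follows from the short algebraic chain above.
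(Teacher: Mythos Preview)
Your upper bound argument is essentially the paper's own: the test element $s^*\star(w_{a_1,\mu_1},w_{a_2,\mu_2})$ lying on $V$ is exactly the maximum point of $s\mapsto J(s\star(w_{a_1,\mu_1},w_{a_2,\mu_2}))$ used in Lemma~\ref{lem: crucial estimate beta_2}, and your explicit value $8A^3/(27B^2)$ is the same quantity. The remark about ``enlarging $\beta_2$ if necessary'' is misplaced: in the paper $\beta_2$ is \emph{defined} (via Lemma~\ref{lem: crucial estimate beta_2}) precisely so that this estimate holds, so no enlargement is needed.

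Your lower bound argument, however, has a genuine gap. The assertion
\[
J(\tilde u,\tilde v)=\sup_{s_1,s_2>0}J(\tilde u^{s_1},\tilde v^{s_2})\quad\text{with the supremum attained at }(1,1)
\]
is not established by the minimax construction of Theorem~\ref{thm: existence for beta small}, and it is not clear that it is even true. The heuristic ``Morse index $2$'' from Remark~b) would at best give that $(1,1)$ is a \emph{local} maximum along the two scaling directions, not a global one. Moreover, when $s_1\neq s_2$ the interaction term
\[
\int_{\R^3}(s_1\star\tilde u)^2(s_2\star\tilde v)^2
 = e^{3s_2}\int_{\R^3}\tilde u^2(y)\,\tilde v^2\big(e^{s_2-s_1}y\big)\,dy
\]
depends on $s_2-s_1$ in a nontrivial way, so the map $(s_1,s_2)\mapsto J(\tilde u^{s_1},\tilde v^{s_2})$ is not a polynomial in $e^{s_1},e^{s_2}$ and its global structure is not determined by the critical point condition alone. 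Your appeal to ``the substantive content of the minimax construction'' does not close this; the construction in Section~3 never analyses two independent dilations of the \emph{solution} itself.

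The paper's route for the lower bound is different and direct. By the linking Lemma~\ref{lem: linking condition}, every $\gamma\in\Gamma$ meets $\mathcal{P}(a_1,\mu_1+\beta)\times\mathcal{P}(a_2,\mu_2+\beta)$; on this product set Lemma~\ref{lem: small 1} gives
\[
J\ge \ell(a_1,\mu_1+\beta)+\ell(a_2,\mu_2+\beta)>\max\{\ell(a_1,\mu_1),\ell(a_2,\mu_2)\}
\]
for $0<\beta<\beta_1$, the strict inequality being exactly the defining condition \eqref{def_beta_1}. Hence the minimax level $c=J(\tilde u,\tilde v)$ satisfies $c>\max\{\ell(a_1,\mu_1),\ell(a_2,\mu_2)\}$ immediately (this is the statement of Lemma~\ref{lem: application minimax}). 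No analysis of dilations of $(\tilde u,\tilde v)$ is required.
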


In \cite{BarJea} the authors consider systems of the type of \eqref{power-type sys} looking also for solutions satisfying \eqref{normalization}. The results obtained in \cite{BarJea} have no intersection with the one of the present paper because there $2<p_1<2+4/N<p_2<6$. A common feature is that one looks for constrained critical points in a situation where the functional is unbounded from below on the constraint. Already in the scalar case it is known that, when the underlying equations are set on all the space, looking to critical points which are not global minima of the associated functional may present new difficulties (with respect to the minimizing problem), see \cite{Jeanjean, BeJeLu}. In particular a standard approach following the Compactness Concentration Principle of  P.L. Lions \cite{Li1,Li2} is hardly applicable. We also mention \cite{BaVa,JeLuWa,Lu} for multiplicity results in that direction, and \cite{NoTaVe1} for normalized solutions in bounded domains.

In the second part of the paper we partially generalize the previous results to the $k\ge 2$ components system
\begin{equation}\label{system many}
\begin{cases}
-\Delta u_i -\lambda_i u_i = \sum_{j=1}^k \beta_{ij} u_j^2 u_i & \text{in $\R^3$} \\
u_i \in H^1(\R^3)
\end{cases} \qquad i=1,\dots,k,
\end{equation}
with the normalization condition
\begin{equation}\label{norm many}
\int_{\R^3} u_i^2 = a_i^2 \qquad i=1,\dots,k.
\end{equation}
We always suppose that $\beta_{ij}=\beta_{ji}$ for every $i \neq j$. Notice that problem \eqref{system}-\eqref{normalization} falls in this setting with $k=2$, $u=u_1$, $v=u_2$, $\beta_{ii}=\mu_i$ and $\beta_{12}=\beta$.

From a variational point of view, thanks to the fact that $\beta_{ij}=\beta_{ji}$ solutions of \eqref{system many}-\eqref{norm many} are critical points of
\[
J(u_1,\dots,u_k):=  \int_{\R^3} \left(\frac{1}{2} \sum_{i=1}^k|\nabla u_i|^2 -\frac{1}{4}\sum_{i,j=1}^k \beta_{ij} u_i^2 u_j^2\right)
\]
on the constraint $T_{a_1} \times \dots \times T_{a_k}$, where $T_a$ has been defined in \eqref{def sphere}. Notice that the definition of the functional $J$ depends on $k$ and the matrix $\beta_{ij})$, but we will not stress such dependence to keep the notation as simple as possible.

The first result which we present is the extension of Theorem \ref{thm: existence for beta large} to any $k \ge 3$.

\begin{theorem}\label{thm: large couplings many comp}
Let $k \ge 2$, and let $a_i,\beta_{ii},\beta_{ij}>0$ be positive constant, such that the following inequality holds:
\begin{equation}\label{condition couplings}
\frac{\displaystyle \left( \sum_{i=1}^k a_i^2 \right)^3}{ \displaystyle \left( \sum_{i,j=1}^k \beta_{ij} a_i^2 a_j^2\right)^2 } < \min_{\substack{ \mathcal{I} \subset \{1,\dots,k\} \\ |\mathcal{I}| \le k-1 }}\frac{1}{ \displaystyle \left[ \max_{i \in \mathcal{I}} \{\beta_{jj} a_j\} + \frac{k-2}{k-1}\max_{\substack{i \neq j  \\ i,j \in \mathcal{I} }} \left\{  \beta_{ij} a_i^{1/2} a_j^{1/2}\right\}\right]^2},
\end{equation}
where $|\mathcal{I}|$ denotes the cardinality of the set $\mathcal{I}$. Then \eqref{system many}-\eqref{norm many} has a solution $(\bar \lambda_1,\dots,\bar \lambda_k,\bar u_1,\dots,\bar u_k)$ such that $\bar \lambda_i<0$, and $\bar u_i$ is positive and radial for every $i$. Moreover,
\[
J(\bar u_1,\dots,\bar u_k) = \inf\left\{J( u_1,\dots,u_k): \text{$(u_1,\dots,u_k)$ is a solution of \eqref{system many}-\eqref{norm many}} \right\},
\]
that is $(\bar \lambda_1,\dots,\bar \lambda_k,\bar u_1,\dots,\bar u_k)$ is a ground state solution.
\end{theorem}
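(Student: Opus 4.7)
The plan is to mimic the strategy underlying Theorem~\ref{thm: existence for beta large}, replacing the two-component Pohozaev manifold by its $k$-component analogue
\[
V_k:=\Bigl\{(u_1,\dots,u_k)\in T_{a_1}\times\cdots\times T_{a_k}:G(u_1,\dots,u_k)=0\Bigr\},\qquad G=\int_{\R^3}\sum_i|\nabla u_i|^2-\frac{3}{4}\int_{\R^3}\sum_{i,j}\beta_{ij}u_i^2u_j^2.
\]
The central tool is the $L^2$-preserving dilation $(t\star u_i)(x)=t^{3/2}u_i(tx)$. For fixed $(u_1,\dots,u_k)$ with at least one component nontrivial, the fiber map $\psi(t):=J(t\star u_1,\dots,t\star u_k)=\tfrac{t^2}{2}\sum_i\|\nabla u_i\|_2^2-\tfrac{t^3}{4}\sum_{i,j}\beta_{ij}\int u_i^2u_j^2$ is strictly concave on $(0,\infty)$ in $t$, has a unique critical point $t^\ast>0$ which is its global maximum, and $t^\ast\star(u_1,\dots,u_k)\in V_k$. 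This yields that $V_k$ is a codimension-one $C^1$ submanifold, is a natural constraint for $J$ on $T_{a_1}\times\cdots\times T_{a_k}$, and that on $V_k$ the functional reduces to $J=\tfrac{1}{6}\sum_i\|\nabla u_i\|_2^2$, hence is coercive and strictly positive. I would then set $m_k:=\inf_{V_k\cap H^1_{\mathrm{rad}}}J$.

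Next I would construct a Palais--Smale sequence $\{U^n\}\subset V_k\cap H^1_{\mathrm{rad}}(\R^3)^k$ at level $m_k$ using Ekeland's principle, together with an auxiliary functional encoding the $\star$-action as in the proof of Theorem~\ref{thm: existence for beta large}; the coercivity of $J|_{V_k}$ gives $H^1$-boundedness, and radial symmetry provides compactness of the embedding $H^1_{\mathrm{rad}}(\R^3)\hookrightarrow L^4(\R^3)$, so that along a subsequence $U^n\wc U=(u_1,\dots,u_k)$ weakly in $H^1$ and strongly in $L^4$. The Lagrange multiplier part of the Palais--Smale condition produces numbers $\lambda_i^n$ satisfying $-\Delta u_i^n-\lambda_i^n u_i^n=\sum_j\beta_{ij}(u_j^n)^2 u_i^n+o(1)$; testing against $u_i^n$ and using $G(U^n)\to 0$ forces each $\lambda_i^n$ to be bounded and, in the limit, strictly negative.

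The decisive and hardest step is to exclude loss of mass in any component, i.e.\ to show $\|u_i\|_2=a_i$ for every $i$. Since $\lambda_i<0$, standard arguments show that if any single component $u_i$ has $\|u_i\|_2<a_i$ then the mass defect must concentrate at infinity, so in the profile decomposition of $U^n$ one splits off a vanishing piece; proper alternatives are either that some coordinate $u_i\equiv 0$ in the limit, or that $(u_1,\dots,u_k)$ is a solution of the reduced $|\mathcal{I}|$-component problem for some $\mathcal{I}\subsetneq\{1,\dots,k\}$. In both cases the limit level is bounded below by the ground state energy $m_{\mathcal{I}}$ of the reduced problem, which one can estimate from below by
\[
m_{\mathcal{I}}\ \gtrsim\ \Bigl[\max_{j\in\mathcal{I}}\{\beta_{jj}a_j\}+\tfrac{k-2}{k-1}\max_{i\neq j,\,i,j\in\mathcal{I}}\{\beta_{ij}a_i^{1/2}a_j^{1/2}\}\Bigr]^{-2},
\]
using the reduction $J=\tfrac{1}{6}\sum\|\nabla u_i\|_2^2$ on the reduced Pohozaev manifold and Gagliardo--Nirenberg inequalities applied termwise to each $\int u_i^2u_j^2$. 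On the other hand, testing $m_k$ against a suitable rescaling of the normalized Gaussian (or any fixed radial profile) with all components active produces the upper bound
\[
m_k\ \lesssim\ \frac{(\sum_i a_i^2)^3}{(\sum_{i,j}\beta_{ij}a_i^2a_j^2)^2}.
\]
Condition~\eqref{condition couplings} is precisely what makes the upper bound strictly smaller than the lower bound $m_{\mathcal{I}}$ for every proper $\mathcal{I}$, thereby ruling out splitting and forcing $U\in V_k$.

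Finally, $U$ is a minimizer of $J$ on $V_k\cap H^1_{\mathrm{rad}}$, hence a constrained critical point of $J$ on $T_{a_1}\times\cdots\times T_{a_k}$, producing the multipliers $\bar\lambda_i<0$ and the system~\eqref{system many}. Replacing each $u_i$ by $|u_i|$ preserves membership in $V_k$ and the value of $J$; the strong maximum principle applied to each equation $-\Delta\bar u_i-\bar\lambda_i\bar u_i=(\sum_j\beta_{ij}\bar u_j^2)\bar u_i\ge 0$ (with $\bar\lambda_i<0$) yields $\bar u_i>0$. Since the Pohozaev identity places \emph{every} solution of \eqref{system many}--\eqref{norm many} inside $V_k$, the inequality $J(\bar u_1,\dots,\bar u_k)=\inf_{V_k} J\le J(u_1,\dots,u_k)$ for any such solution establishes the ground state property. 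I expect the sharp implementation of the splitting alternative and the verification of the estimates matching both sides of~\eqref{condition couplings} to be the main technical obstacle.
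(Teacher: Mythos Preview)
Your overall architecture matches the paper's: minimize $J$ on the radial Pohozaev set, use the fibering map to identify the level with a Rayleigh-type quotient, and rule out collapse to a proper subsystem by the energy comparison encoded in \eqref{condition couplings}. Two points need correction.

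\textbf{The sign of the multipliers.} The assertion that ``testing against $u_i^n$ and using $G(U^n)\to 0$ forces each $\lambda_i^n$ to be bounded and, in the limit, strictly negative'' is not justified. Testing and summing gives only
\[
\sum_i \lambda_i^n a_i^2 = -\tfrac{1}{3}\sum_i\|\nabla u_i^n\|_2^2 + o(1) < 0,
\]
so you know merely that \emph{at least one} limit $\lambda_i$ is negative. For those indices you do get strong $H^1$-convergence; for the others you must argue separately. The mechanism the paper uses (and which your profile-decomposition sketch does not supply) is a Liouville-type step: if $\lambda_j\ge 0$ then the nonnegative limit $\bar u_j\in L^2$ satisfies $-\Delta \bar u_j\ge 0$ in $\R^3$, and hence $\bar u_j\equiv 0$. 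This is what forces the limit to solve the reduced system on $\mathcal I_1=\{i:\lambda_i<0\}$ and yields the inequality $d\ge \inf_{V^{\mathcal I_1}}J$ that feeds into the contradiction. Without this step the dichotomy ``$u_i\equiv 0$ or solves a subsystem'' is not established; mere loss of mass at infinity does not by itself imply vanishing of the weak limit.

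\textbf{The test function and the constants.} Using ``a normalized Gaussian or any fixed radial profile'' will not reproduce condition \eqref{condition couplings} as stated. Both the upper bound on $d$ and the lower bound on $\inf_{V^{\mathcal I_1}}J$ involve the sharp Gagliardo--Nirenberg constant; the paper obtains the \emph{same} factor $C_0C_1/8$ on both sides precisely by testing the Rayleigh quotient with $(w_{a_1,C_0/a_1^2},\dots,w_{a_k,C_0/a_k^2})$, i.e.\ rescalings of the Gagliardo--Nirenberg optimizer $w_0$. A non-optimal profile would give a larger constant in the upper bound, and the resulting hypothesis would be strictly stronger than \eqref{condition couplings}. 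So the choice of test function is not arbitrary here.
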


Some remarks are in order. 

\begin{remark}\label{rem: on multi-comp} a) The set of parameters fulfilling condition \eqref{condition couplings} is not empty. For instance, if $a_i=a$ for every $i$, $\beta_{ii}>0$ are fixed and $\beta_{ij}=\beta$ for every $i \neq j$, then \eqref{condition couplings} is satisfied provided $\beta$ is sufficiently large. More in general, if $\beta_{ii}>0$,  $\beta_{ij}=\beta$ for every $i \neq j$, and
\[
\frac{\left(\sum_i a_i^2 \right)^3 \left(\frac{k-2}{k-1}\right)^2 \max_{i \neq j} \{a_i a_j\} }{\left( \sum_{i \neq j} a_i^2 a_j^2\right)^2} <1,
\]
then \eqref{condition couplings} is satisfied provided $\beta$ is sufficiently large.

b) At a first glance \eqref{condition couplings} seems unclear if compared with the simple condition $\beta > \beta_2$ appearing in Theorem \ref{thm: existence for beta large}. On the contrary, for $\beta_{ii}$ and $a_i$ fixed and $k=2$, it is easy to check that \eqref{condition couplings} is fulfilled provided $\beta_{12}$ is larger than a positive threshold $\beta_2'$ (which can be explicitly computed). We observed that the value $\beta_2$ in Theorem \ref{thm: existence for beta large} can be estimated, see Remark \ref{rem: estimate beta_2}. Actually, using \eqref{condition couplings} we expect to have a better estimate (in the sense that $\beta_2' \le \beta_2$); the price to pay is that the derivation of \eqref{condition couplings} requires a lot of extra work.

c) A condition somehow similar to \eqref{condition couplings} appears also for the problem with fixed frequencies $\lambda_i$, see Theorem 2.1 in \cite{LiuWang}.
\end{remark}

Regarding the extension of Theorem \ref{thm: existence for beta small} to systems with an arbitrary number of components, we have a much weaker result.

\begin{proposition}\label{prop: small couplings}
Let $a_i,\beta_{ii}>0$ be fixed positive constant. There exists $\beta_0>0$ such that if $|\beta_{ij}|<\beta_0$ for every $i \neq j$, then system \eqref{system many}-\eqref{norm many} has a solution $(\tilde \lambda_1,\dots,\tilde \lambda_k,\tilde u_1,\dots,\tilde u_k)$ such that $\lambda_i<0$, and $u_i$ is positive and radial for every $i$.
\end{proposition}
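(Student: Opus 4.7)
The plan is to extend the minimax argument behind Theorem~\ref{thm: existence for beta small} by treating the off-diagonal couplings $\beta_{ij}$ ($i\ne j$) as a small perturbation of the fully decoupled system. For each $i$, let $w_i\in T_{a_i}$ be the positive radial ground state of the scalar problem $-\Delta w-\lambda w=\beta_{ii}w^3$ in $\R^3$, realized as the mountain-pass level $\ell_i:=\ell(a_i,\beta_{ii})$ of $J_i(u)=\tfrac12\|\nabla u\|_2^2-\tfrac{\beta_{ii}}{4}\|u\|_4^4$ along the $L^2$-preserving scaling $(s\star u)(x):=e^{3s/2}u(e^s x)$, so that $s\mapsto J_i(s\star w_i)$ attains its unique maximum $\ell_i$ at $s=0$ and decays to $-\infty$ as $s\to+\infty$. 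We work throughout in the radial subspace, which restores the compactness of $H^1_{\rm rad}(\R^3)\hookrightarrow L^p(\R^3)$ for $2<p<6$.

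For the coupled functional $J$, introduce the product scaling
$$
\Psi(s_1,\dots,s_k):=(s_1\star w_1,\dots,s_k\star w_k)\in T_{a_1}\times\cdots\times T_{a_k},
$$
for which $J_0\circ\Psi$ (with $J_0:=\sum_i J_i$) has its unique strict maximum $\sum_i\ell_i$ at $s=0$ and tends to $-\infty$ when $\max_i s_i\to+\infty$. Fix $R$ so large that $J_0\circ\Psi<0$ on $\partial Q_R$ with $Q_R:=[-R,R]^k$, and define
$$
\Gamma_\beta:=\bigl\{\gamma\in C(Q_R,\,T_{a_1}\times\cdots\times T_{a_k}):\gamma|_{\partial Q_R}=\Psi|_{\partial Q_R}\bigr\},\quad c(\beta):=\inf_{\gamma\in\Gamma_\beta}\max_{Q_R}J\circ\gamma.
$$
A $k$-dimensional linking argument (every $\gamma\in\Gamma_\beta$ meets the Pohozaev-type submanifold $\{G=0\}$, where $G$ is the natural multi-component analogue of the quantity in \eqref{def constainA}) gives $c(\beta)\ge\alpha>0$ uniformly in small $|\beta_{ij}|$; on the other hand, using $\Psi$ itself as a test path and the continuity of the coupling integrals on the bounded set $\Psi(Q_R)$, one obtains
$$
c(\beta)=\sum_{i=1}^k\ell_i+O\bigl(\max_{i\ne j}|\beta_{ij}|\bigr)\quad\text{as }\max_{i\ne j}|\beta_{ij}|\to 0.
$$

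Applying the minimax principle used in the proof of Theorem~\ref{thm: existence for beta small} yields a Palais-Smale sequence $(u_1^n,\dots,u_k^n)$ for $J|_{T_{a_1}\times\cdots\times T_{a_k}}$ at level $c(\beta)$, with the bonus information $G(u^n)\to 0$. The Pohozaev relation forces $\|\nabla u_i^n\|_2$ to remain bounded, the Lagrange multipliers $\lambda_i^n$ to be bounded and, up to a subsequence, to converge to limits $\tilde\lambda_i$; testing the limiting equations and using the Pohozaev identity shows $\tilde\lambda_i<0$. The main obstacle is preventing partial $L^2$-mass loss in the weak limit, since $H^1_{\rm rad}$ does not embed compactly into $L^2$. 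If $u_i^n\rightharpoonup\tilde u_i$ in $H^1_{\rm rad}$ with $\|\tilde u_i\|_2^2<a_i^2$ for some $i$, then $(\tilde u_1,\dots,\tilde u_k)$ would solve a reduced normalized system with energy $c(\beta)$, but the same lower-bound argument applied to the reduced problem, together with the strict monotonicity of $\ell(\cdot,\beta_{ii})$ in the mass, forces this energy to strictly exceed the sharp asymptotic value of $c(\beta)$, a contradiction once $|\beta_{ij}|$ is small enough. Strong $H^1$-convergence then follows, and positivity is obtained by replacing each $w_i$ by $|w_i|$ in $\Psi$ (which leaves $\Gamma_\beta$ and $c(\beta)$ unchanged) and applying the strong maximum principle to \eqref{system many}.
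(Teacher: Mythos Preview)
The paper proves this proposition by a direct application of the implicit function theorem: at $\beta_{ij}=0$ ($i\ne j$) the system decouples into $k$ independent scalar problems, each having the nondegenerate radial solution $(\lambda_{a_i,\beta_{ii}},w_{a_i,\beta_{ii}})$ described in Proposition~\ref{prop: explicit}; nondegeneracy in the radial class then yields a branch of solutions for all small off-diagonal couplings. Your route via a $k$-dimensional minimax is not what the paper does, and in its present form it has genuine gaps.

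First, your geometric setup is incorrect. You claim one can choose $R$ so that $J_0\circ\Psi<0$ on $\partial[-R,R]^k$. But $J_0\circ\Psi(s)=\sum_i J_i(s_i\star w_i)$, and on the face $\{s_1=-R\}$ the first summand is small positive while the remaining ones can be as large as $\sum_{i\ge 2}\ell_i>0$; hence $J_0\circ\Psi$ is certainly not negative there. In the paper's proof of Theorem~\ref{thm: existence for beta small} the cube is \emph{asymmetric}, $Q=\prod_i[\rho_i,R_i]$, and the relevant boundary estimate is $\sup_{\partial Q}J(\gamma_0)\le\max_i\ell_i+\eps$, not $\le 0$.

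Second, and more seriously, your linking set has the wrong codimension. Linking a $k$-parameter family with the single Pohozaev manifold $\{G=0\}$ (codimension one) only yields $c(\beta)\ge\inf_{\{G=0\}}J>0$, which is the mountain-pass level $d$ of Theorem~\ref{thm: existence for beta large}, far below $\sum_i\ell_i$. The compactness step, however, requires the sharper bound $c(\beta)>\text{(energy of any proper subsystem solution)}$, which for $k=2$ is precisely $c>\max\{\ell_1,\ell_2\}$; in the paper this is obtained by linking with the \emph{product} $\mathcal{P}(a_1,\mu_1+\beta)\times\mathcal{P}(a_2,\mu_2+\beta)$ (codimension two), cf.\ Lemmas~\ref{lem: small 1} and~\ref{lem: linking condition}. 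Your argument never establishes such a bound, so the asymptotic ``equality'' $c(\beta)=\sum_i\ell_i+O(|\beta|)$ is only an upper estimate.

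Third, the compactness paragraph is not right. The dichotomy in the paper is not ``partial mass loss versus full mass'': Lemma~\ref{lem: strong convergence} gives strong $H^1$-convergence whenever $\lambda_i<0$, and if $\lambda_i\ge 0$ then the Liouville-type result forces $\tilde u_i\equiv 0$ (not merely $\|\tilde u_i\|_2<a_i$). Then the surviving components solve a subsystem whose energy equals $c(\beta)$, and to derive a contradiction one must compare $c(\beta)$ with the level of that subsystem. As noted in Remark~\ref{rem: on uniqueness system}, this comparison requires knowing \emph{all} positive solutions of the subsystem, a uniqueness statement available only for very small couplings via \cite{IkoUni}; you neither invoke this nor replace it by another mechanism. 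Your claim that all $\tilde\lambda_i<0$ follows ``from the Pohozaev identity'' is also unjustified: the paper only obtains that \emph{at least one} multiplier is negative (Lemma~\ref{lem: property multipliers}).

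In short, a minimax proof along these lines can in principle be carried out (the paper itself says so in Remark~\ref{rem: on uniqueness system}), but it needs the correct product linking, the correct boundary estimates, and the subsystem uniqueness from \cite{IkoUni}; the implicit function theorem bypasses all of this.
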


The proof is based on a simple application of the implicit function theorem, and is omitted for the sake of brevity. Notice that using a perturbative argument we can allow some (or all) the couplings $\beta_{ij}$ to take negative values. On the other hand, being $\beta_0$ obtained by a limit argument, it cannot be estimated from below and it could be very small; in this sense Proposition \ref{prop: small couplings} is weaker than Theorem \ref{thm: existence for beta small}, where an explicit estimate for $\beta_1$ is available.

\medskip

Let us now turn to the question of the orbital stability of the solitary waves of
\begin{equation}\label{syst schrod many}
- \iota \pa_t \Phi_j = \Delta \Phi_j + \beta_{jj} |\Phi_j|^2 \Phi_j+ \sum_{k \neq j}\beta_{kj} |\Phi_k|^2 \Phi_j \qquad \text{in $\R \times \R^3$, $j=1,\dots,k$},
\end{equation}
associated to the solutions found in Theorem \ref{thm: large couplings many comp} (or Theorem \ref{thm: existence for beta large} if $k=2$). In this framework, we can adapt the classical Berestycki-Cazenave argument \cite{BerCaz} (see also \cite{Caz,LeCoz} for more detailed proofs) and prove the following:

\begin{theorem}\label{thm: instability}
Let $k \ge 2$, and $(\bar \lambda_1,\dots,\bar \lambda_k,\bar u_1,\dots,\bar u_k)$ be the solution obtained in Theorem \ref{thm: large couplings many comp} (or in Theorem \ref{thm: existence for beta large} if $k = 2$). Then the associated  solitary wave is orbitally unstable.
\end{theorem}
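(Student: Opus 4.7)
The proof adapts the classical Berestycki--Cazenave strategy \cite{BerCaz}. The plan is to construct, for each $\varepsilon>0$, an initial datum $H^1$-close to $(\bar u_1,\ldots,\bar u_k)$ whose solution of \eqref{syst schrod many} blows up in finite time; since the orbit of the standing wave is bounded in $H^1$, this rules out orbital stability. I would first invoke local well-posedness of \eqref{syst schrod many} in $H^1(\R^3,\CC^k)$ and in the weighted space $\Sigma:=\{\Phi\in H^1:|x|\Phi\in L^2\}$, together with the virial identity
\[
\frac{d^2}{dt^2}\sum_{j=1}^k\int_{\R^3}|x|^2|\Phi_j(t,x)|^2\,dx \;=\; 8\,G(\Phi_1(t),\ldots,\Phi_k(t)),
\]
where $G$ is the natural $k$-component extension of the Pohozaev functional in~\eqref{def constainA}.

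The perturbation is produced by the $L^2$-preserving dilation $u^s(x):=s^{3/2}u(sx)$. The function $s\mapsto J(\bar u_1^s,\ldots,\bar u_k^s)$ is a cubic polynomial in $s$ whose unique maximum is at $s=1$ (this is equivalent to the Pohozaev identity $G(\bar u)=0$). For $s_0>1$ close to $1$, the datum $(u_{0,j}):=(\bar u_j^{s_0})$ lies in $\Sigma\cap (T_{a_1}\times\cdots\times T_{a_k})$, is $O(s_0-1)$-close to $(\bar u_j)$ in $H^1$, and satisfies
\[
\eta\;:=\;J(\bar u_1,\ldots,\bar u_k)-J(u_{0,1},\ldots,u_{0,k})\;>\;0,\qquad G(u_0)\;<\;0.
\]

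Let $(\Phi_j(t))$ solve \eqref{syst schrod many} with this initial datum. Mass and energy conservation keep $\Phi(t)$ on $T_{a_1}\times\cdots\times T_{a_k}$ with $J(\Phi(t))=J(\bar u)-\eta<J(\bar u)$. By the ground-state characterization $J(\bar u)=\inf_V J$ supplied by Theorems~\ref{thm: existence for beta large} and~\ref{thm: large couplings many comp}, the trajectory cannot meet $V=\{G=0\}$; combined with $G(u_0)<0$ and continuity, this forces $G(\Phi(t))<0$ for the full lifespan. A quantitative version, obtained by comparing $J(\Phi(t))$ with the value of $s\mapsto J(\Phi(t)^s)$ at its maximum point $s^*(t)\in(0,1)$ (which belongs to $V$, hence is $\geq J(\bar u)$) and solving for the gap $\eta$ in terms of $-G(\Phi(t))$, upgrades this to a uniform bound $G(\Phi(t))\le -c(\eta)<0$. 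The virial identity then gives $y''(t)\le -8c(\eta)$, forcing $y(t)\to-\infty$ in finite time and contradicting $y\ge 0$; hence the lifespan is finite. Since $s_0$ can be taken arbitrarily close to $1$, the orbit of the standing wave cannot be stable.

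The main obstacle is the quantitative sign bound $-G(\Phi(t))\ge c(\eta)>0$ in the third step: it depends on the full ground-state characterization and on a careful scaling/minimization argument exploiting the Pohozaev structure of $V$, and it is the place where the variational information from the existence proof enters in an essential way. A secondary technicality is upgrading finite-time blow-up to the negation of the precise $\varepsilon$--$\delta$ stability statement for the $k$-component orbit modulo phases and translations, which follows from standard continuous dependence on the initial data over the stability timescale.
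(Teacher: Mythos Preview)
Your proposal is correct and follows essentially the same Berestycki--Cazenave strategy as the paper: perturb $\bar{\mf u}$ by the $L^2$-preserving dilation, use the ground-state characterization $J(\bar{\mf u})=\inf_V J$ together with a scaling argument to obtain a uniform negative upper bound on $G(\Phi(t))$, and conclude via the virial identity. The paper makes your ``quantitative version'' step completely explicit through a short concavity argument (Lemma~\ref{lem: lem for instability}): writing $g_{\mf u}(t)=J(\log t\star\mf u)$, one has $G(\mf u)=g'_{\mf u}(1)$, and concavity of $g_{\mf u}$ on $(t_{\mf u},\infty)$ yields directly $G(\mf u)\le J(\mf u)-d$, so in fact $c(\eta)=\eta$; the paper also notes that one must check $\inf_{V_\CC}J=\inf_V J$ since the Schr\"odinger flow lives in complex-valued $H^1$, a point you leave implicit.
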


Regarding the stability of the solutions found in Theorem \ref{thm: existence for beta small} and Proposition \ref{prop: small couplings}, a Berestycki-Cazenave-type argument does not seem to be applicable, since these solutions are characterized by a different minimax construction with respect to those in Theorems \ref{thm: existence for beta large} and \ref{thm: large couplings many comp}. Therefore, the stability remains open in these cases.

\medskip

The orbital stability of solutions to weakly coupled Schr\"odinger equations associated to power-type systems like \eqref{power-type sys} has been studied in several papers (we refer to \cite{Cor,MaMoPe2, NgWa, Ohta} and to the references therein), but the available results mainly regard the $L^2$-subcritical setting setting $2p<1+4/N$, and the problem with fixed frequencies. In particular, we point out that Theorem \ref{thm: instability} does not follow by previous contributions.

\section{Preliminaries}\label{sec: preliminaries}

In the first part of the section, we collect some facts concerning the cubic NLS equation, which will be used later. Let us consider the scalar problem
\begin{equation}\label{scalar pb}
\begin{cases}
-\Delta w + w= w^3 & \text{in $\R^3$} \\
w>0    & \text{in $\R^3$} \\
w(0) = \max w \quad \mbox{and} \quad  w \in H^1(\R^3).
\end{cases}
\end{equation}
It is well known that \eqref{scalar pb} has a unique solution, denoted by $w_0$ and that this solution is radial. In what follows we set
\begin{equation}\label{def of constants}
C_0:= \int_{\R^3} w_0^2 \quad \text{and} \quad C_1:= \int_{\R^3} w_0^4.
\end{equation}

For $a,\mu \in \R$ fixed, let us search for $(\lambda,w) \in \R \times H^1(\R^3)$, with $\lambda<0$ in $\R^3$, solving
\begin{equation}\label{scalar pb with mass}
\begin{cases}
-\Delta w - \lambda w= \mu w^3 & \text{in $\R^3$}  \\
w(0) = \max w \quad \mbox{ and }  \quad \int_{\R^3} w^2 = a^2.
\end{cases}
\end{equation}
Solutions $w$ of \eqref{scalar pb with mass} can be found as critical points of $I_\mu: H^1(\R^3) \mapsto \R$, defined by
\begin{equation}\label{scalar functional}
I_\mu(w) = \int_{\R^3} \left(\frac{1}{2}|\nabla w|^2 -\frac{\mu}{4} w^4\right),
\end{equation}
constrained on the $L^2$-sphere $T_a$, and $\lambda$ appears as Lagrange multipliers. It is well known that they can be obtained by the solutions of \eqref{scalar pb} by scaling.

Let us introduce the set
\begin{equation}\label{def of P}
\mathcal{P}(a,\mu):= \left\{ w \in T_a: \int_{\R^3} |\nabla w|^2 = \frac{3 \mu }{4} \int_{\R^3} w^4 \right\}.
\end{equation}
The role of $\mathcal{P}(a,\mu)$ is clarified by the following result.

\begin{lemma}\label{lem: natural constraint}
If $w$ is a solution of \eqref{scalar pb with mass}, then $w \in \mathcal{P}(a,\mu)$. In addition the positive solution $w$ of \eqref{scalar pb with mass} minimizes $I_{\mu}$ on $\mathcal{P}(a,\mu)$.
\end{lemma}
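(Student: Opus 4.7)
The two statements will be handled separately.

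\emph{$w\in\mathcal{P}(a,\mu)$.} This is a direct consequence of the Pohozaev identity. Testing $-\Delta w-\lambda w=\mu w^3$ with $w$ produces the Nehari identity $\int_{\R^3}|\nabla w|^2-\lambda a^2=\mu\int_{\R^3}w^4$, while the standard Pohozaev identity (obtained by multiplying by $x\cdot\nabla w$ and integrating on expanding balls, using the $H^1$ decay of $w$) reads
\[
\tfrac{1}{2}\int_{\R^3}|\nabla w|^2=\tfrac{3\lambda}{2}a^2+\tfrac{3\mu}{4}\int_{\R^3}w^4.
\]
Eliminating $\lambda a^2$ between these two relations immediately yields $\int_{\R^3}|\nabla w|^2=\tfrac{3\mu}{4}\int_{\R^3}w^4$, which is exactly $w\in\mathcal{P}(a,\mu)$.

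\emph{Minimality of $w$.} The key tool is the $L^2$-preserving scaling $u_t(x):=t^{3/2}u(tx)$, which maps $T_a$ into itself. A direct computation gives
\[
\phi_u(t):=I_\mu(u_t)=\tfrac{t^2}{2}\|\nabla u\|_2^2-\tfrac{\mu t^3}{4}\|u\|_4^4,
\]
so $\phi_u$ is concave-like and attains a strict global maximum on $(0,\infty)$ at the unique point $t_u>0$ for which $u_{t_u}\in\mathcal{P}(a,\mu)$; in particular $u\in\mathcal{P}(a,\mu)$ iff $\phi_u'(1)=0$, and in that case $\phi_u''(1)=-\tfrac{3\mu}{4}\|u\|_4^4<0$. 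Since on $\mathcal{P}(a,\mu)$ the functional simplifies to $I_\mu(u)=\tfrac16\|\nabla u\|_2^2$, this produces the mountain-pass type representation
\[
\inf_{\mathcal{P}(a,\mu)}I_\mu=\inf_{u\in T_a}\max_{t>0}I_\mu(u_t)>0.
\]

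To identify this value with $I_\mu(w)$ I would first show the infimum is attained. Via Schwarz rearrangement (which preserves $\|u\|_2,\|u\|_4$ and does not increase $\|\nabla u\|_2$) one may restrict to a minimizing sequence of positive, radially decreasing functions; the compact embedding $H^1_{\mathrm{rad}}(\R^3)\hookrightarrow L^4(\R^3)$ together with weak lower semicontinuity of the Dirichlet energy then yields a positive radial minimizer $u^*\in\mathcal{P}(a,\mu)$. A Lagrange multiplier argument on the double constraint $T_a\cap\mathcal{P}(a,\mu)$ shows that $u^*$ solves \eqref{scalar pb with mass} for some $\lambda^*<0$, provided the multiplier $\eta$ associated with the Pohozaev constraint $G=0$ vanishes. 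By the classical uniqueness (up to translation) of positive $H^1$-solutions of the cubic NLS---obtained by rescaling back to \eqref{scalar pb}---one then concludes $u^*\equiv w$ (up to a translation), so that $I_\mu(w)=\inf_{\mathcal{P}(a,\mu)}I_\mu$.

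\textbf{Main obstacle.} The delicate point is exactly proving that $\mathcal{P}(a,\mu)$ is a \emph{natural constraint}, i.e.\ that $\eta=0$. The plan is to test the Euler--Lagrange equation for $u^*$ against the scaling direction $\partial_t u_t|_{t=1}$: the Nehari side vanishes because $u^*\in\mathcal{P}$, the $T_a$-multiplier contributes $0$ since $\|u_t\|_2^2$ is constant, and one is left with $\eta\cdot\tfrac{d}{dt}G(u_t)|_{t=1}=\eta\,\phi_{u^*}''(1)$. The strict inequality $\phi_{u^*}''(1)<0$---itself a manifestation of the $L^2$-supercritical character of the cubic term in $\R^3$---forces $\eta=0$. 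This transversality between the fibration $t\mapsto u_t$ and $\mathcal{P}(a,\mu)$ is the crucial ingredient that makes the whole Pohozaev--minimax strategy work, and it is precisely the feature that will be exploited again, in a two-component setting, in Theorem~\ref{thm: existence for beta large}.
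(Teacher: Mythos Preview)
Your proposal is correct. The paper's own proof consists entirely of a citation to \cite{Jeanjean} (Lemma 2.7 for the first part, Lemma 2.10 for the second), so what you have written is precisely a self-contained version of the cited argument: the Nehari/Pohozaev elimination for $w\in\mathcal{P}(a,\mu)$, and the natural-constraint argument via the $L^2$-preserving scaling for the minimality. Your verification that $\eta=0$ by testing against $\partial_t u_t|_{t=1}$ and using $\phi_{u^*}''(1)<0$ is the standard and correct way to handle the only delicate point.
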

\begin{proof}
The proof of the first part is a simple consequence of the Pohozaev identity. We refer to Lemma 2.7 in \cite{Jeanjean} for more details. For the last part we refer to Lemma 2.10 in \cite{Jeanjean}.
\end{proof}

\begin{proposition}\label{prop: explicit}
Problem \eqref{scalar pb with mass} has a unique positive solution $(\lambda_{a,\mu}, w_{a,\mu})$ defined by
\[
\lambda_{a,\mu} := -\frac{C_0^2}{\mu^2 a^4} \quad \text{and} \quad
w_{a,\mu}(x):= \frac{C_0 }{\mu^{3/2} a^2} w_0 \left( \frac{C_0}{\mu a^2} x \right).
\]
The function $w_{a,\mu}$ satisfies
\begin{align}
\int_{\R^3} |\nabla w_{a,\mu}|^2 &= \frac{3C_0 C_1}{4\mu^2 a^2}  \label{int grad}\\
\int_{\R^3} w_{a,\mu}^4 & = \frac{C_0 C_1}{\mu^3 a^2}. \label{int fourth} \\
\ell (a,\mu):= I_\mu(w_{a,\mu}) & = \frac{C_0 C_1 }{8 \mu^2 a^2}.  \label{espressione funionale}
\end{align}
The value $\ell(a,\mu)$ is called \emph{least energy level} of problem \eqref{scalar pb with mass}.
\end{proposition}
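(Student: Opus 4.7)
The plan is to reduce problem \eqref{scalar pb with mass} to the canonical equation \eqref{scalar pb} for $w_0$ by a two-parameter scaling, read off the parameters from the equation and the mass constraint, and then compute the three integrals in \eqref{int grad}--\eqref{espressione funionale} by change of variables.

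First I would look for a positive solution in the form $w(x)=\alpha\,w_0(\beta x)$ with $\alpha,\beta>0$. Using $-\Delta w_0=w_0^3-w_0$, a direct substitution yields
\[
-\Delta w(x)-\lambda w(x)-\mu w(x)^3
=\alpha\bigl[(\beta^2-\mu\alpha^2)\,w_0^3(\beta x)-(\beta^2+\lambda)\,w_0(\beta x)\bigr].
\]
Matching the two independent profiles $w_0$ and $w_0^3$ forces $\beta^2=-\lambda$ and $\beta^2=\mu\alpha^2$. Imposing the mass condition $\int_{\R^3}w^2=a^2$ gives, after the substitution $y=\beta x$, the relation $\alpha^2\beta^{-3}C_0=a^2$. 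The three scalar equations
\[
\beta^2=-\lambda,\qquad \alpha^2=\beta^2/\mu,\qquad \alpha^2=a^2\beta^3/C_0
\]
are then solved uniquely, yielding $\beta=C_0/(\mu a^2)$, $\alpha=C_0/(\mu^{3/2}a^2)$, and $\lambda_{a,\mu}=-C_0^2/(\mu^2 a^4)$, which is exactly the asserted formula for $w_{a,\mu}$.

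For uniqueness within the class of positive solutions with $\lambda<0$ and $w(0)=\max w$, I would run the scaling in reverse: given any such $(\lambda,w)$, set $\beta=\sqrt{-\lambda}$, $\alpha=\beta/\sqrt{\mu}$, and $\tilde w(y)=w(y/\beta)/\alpha$; then $\tilde w$ is a positive radial solution of \eqref{scalar pb} attaining its maximum at $0$, so by the well-known uniqueness of $w_0$ (Kwong's theorem, recalled just after \eqref{scalar pb}) we have $\tilde w=w_0$. The mass constraint then fixes $\alpha,\beta,\lambda$ uniquely to the values computed above, so $(\lambda,w)=(\lambda_{a,\mu},w_{a,\mu})$.

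It remains to evaluate the integrals. A change of variables gives
\[
\int_{\R^3} w_{a,\mu}^4=\alpha^4\beta^{-3}C_1=\frac{C_0 C_1}{\mu^3 a^2},
\qquad
\int_{\R^3}|\nabla w_{a,\mu}|^2=\alpha^2\beta^{-1}\int_{\R^3}|\nabla w_0|^2,
\]
after plugging in the explicit $\alpha,\beta$. To close \eqref{int grad}, I would apply the Pohozaev identity to $w_0$, or equivalently note that $w_0\in\mathcal{P}(\sqrt{C_0},1)$ by Lemma \ref{lem: natural constraint}, giving $\int|\nabla w_0|^2=\tfrac34\int w_0^4=\tfrac34 C_1$; then \eqref{int grad} follows. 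Finally, \eqref{espressione funionale} is obtained by plugging \eqref{int grad}--\eqref{int fourth} into the definition \eqref{scalar functional} of $I_\mu$: the two contributions combine to $\tfrac{3}{8}-\tfrac{1}{4}=\tfrac{1}{8}$ times $C_0C_1/(\mu^2 a^2)$. The main (small) obstacle is really just the bookkeeping of powers of $\alpha$ and $\beta$; the conceptual content is entirely contained in the uniqueness of $w_0$ and the Pohozaev identity.
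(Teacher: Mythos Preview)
Your proposal is correct and follows essentially the same approach as the paper: reduce to the canonical problem for $w_0$ by scaling, invoke Kwong's uniqueness, and compute the integrals by change of variables together with the Pohozaev identity for $w_0$ (equivalently, Lemma~\ref{lem: natural constraint} with $a^2=C_0$, $\mu=1$). The only difference is that you derive $\alpha,\beta$ explicitly from the equations rather than simply verifying the given formula, which is a harmless expository choice.
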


\begin{proof}
It is not difficult to directly check that $w_{a,\mu}$ defined in the proposition is a solution of \eqref{scalar pb with mass} for $\lambda= \lambda_{a,\mu}<0$. By \cite{Kwong}, it is the only positive solution.
To obtain \eqref{int grad} and \eqref{int fourth}, we can use the explicit expression of $w_{a,\mu}$: by a change of variables
\[
\int_{\R^3} |\nabla w_{a,\mu}|^2 = \frac{C_0}{\mu^2 a^2} \int_{\R^3} |\nabla w_0|^2 = \frac{3C_0 }{4\mu^2 a^2} \int_{\R^3} w_0^4,
\]
where the last equality follows by Lemma \ref{lem: natural constraint} with $a^2=C_0$ and $\mu = 1$. This gives \eqref{int grad}. In a similar way, one can also prove \eqref{int fourth} and \eqref{espressione funionale}.
\end{proof}

Working with systems with several components, it will be useful to have a characterization of the best constant in a Gagliardo-Nirenberg inequality in terms of $C_0$ and $C_1$. To obtain it, we observe at first that if $w_a:= w_{a,C_0/a^2}$, then $w_a$ is the unique positive solution of
\[
\begin{cases}
-\Delta w + w= \frac{C_0}{a^2} w^3 & \text{in $\R^3$}  \\
w(0) = \max w \quad \mbox{and} \quad  \int_{\R^3} w^2 = a^2,
\end{cases}
\]
and hence is a minimizer of $I_{a,C_0/a^2}$ on $\mathcal{P}(a,C_0/a^2)$. Our next result shows that this level can also be characterized as an infimum of a Rayleigh-type quotient, defined by
\[
\mathcal{R}_a(w):= \frac{8 \left( \int_{\R^3} |\nabla w|^2\right)^3}{27\left(\frac{C_0}{a^2} \int_{\R^3} w^4\right)^2}.
\]
\begin{lemma}\label{leas esnergy as ray 1}
There holds
\[
\inf_{\mathcal{P}(a,C_0/a^2)} I_{a,C_0/a^2} = \inf_{T_a} \mathcal{R}_a.
\]
\end{lemma}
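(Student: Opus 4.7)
My plan is to exploit the $L^2$\nobreakdash-preserving scaling $w_t(x):= t^{3/2}w(tx)$, which sends $T_a$ into itself and under which $\int |\nabla w_t|^2 = t^2 \int |\nabla w|^2$ and $\int w_t^4 = t^3 \int w^4$. The whole statement then reduces to matching a one\nobreakdash-parameter optimization against the definitions.

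\textbf{Step 1 (identify $I_{C_0/a^2}$ with $\mathcal{R}_a$ on $\mathcal{P}$).} For $w \in \mathcal{P}(a, C_0/a^2)$ the Pohozaev\nobreakdash-type identity defining the set reads $\int |\nabla w|^2 = \frac{3 C_0}{4a^2}\int w^4$. Substituting into $I_{C_0/a^2}(w) = \tfrac12\int|\nabla w|^2 - \tfrac{C_0}{4a^2}\int w^4$ gives $I_{C_0/a^2}(w) = \tfrac{1}{6}\int |\nabla w|^2$. Plugging the same relation into $\mathcal{R}_a(w)$ likewise yields $\mathcal{R}_a(w) = \tfrac{1}{6}\int |\nabla w|^2$. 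So $I_{C_0/a^2}\equiv \mathcal{R}_a$ on $\mathcal{P}(a,C_0/a^2)$, which immediately gives $\inf_{\mathcal{P}(a,C_0/a^2)} I_{C_0/a^2} \ge \inf_{T_a} \mathcal{R}_a$, because $\mathcal{P}(a,C_0/a^2)\subset T_a$.

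\textbf{Step 2 (scale an arbitrary $w\in T_a$ onto $\mathcal{P}$).} Given $w\in T_a$ with $w\not\equiv 0$, I look for $t>0$ so that $w_t\in\mathcal{P}(a,C_0/a^2)$. The condition $t^2\int|\nabla w|^2 = \tfrac{3C_0}{4a^2}t^3\int w^4$ determines
\[
t \;=\; \frac{4 a^2 \int|\nabla w|^2}{3 C_0 \int w^4}\,>0.
\]
Using the scaling laws and Step~1,
\[
I_{C_0/a^2}(w_t) \;=\; \tfrac{1}{6}\int|\nabla w_t|^2 \;=\; \tfrac{1}{6}\,t^2\int|\nabla w|^2 \;=\; \frac{8 a^4\,(\int|\nabla w|^2)^3}{27\,C_0^{\,2}\,(\int w^4)^2} \;=\; \mathcal{R}_a(w).
\]
Hence $\inf_{\mathcal{P}(a,C_0/a^2)} I_{C_0/a^2} \le I_{C_0/a^2}(w_t) = \mathcal{R}_a(w)$, and taking the infimum over $w\in T_a$ yields the reverse inequality.

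There is no real obstacle: the statement is purely a scaling identity, and the one subtle point is only to check that the same $L^2$\nobreakdash-preserving dilation that realizes the infimum of $\mathcal{R}_a$ also hits the Pohozaev set $\mathcal{P}(a,C_0/a^2)$. The case $\int w^4=0$ is irrelevant since $\mathcal{R}_a(w)=+\infty$ there, and positivity of $t$ is automatic, so no technicalities arise.
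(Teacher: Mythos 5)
Your proof is correct and takes essentially the same route as the paper's (which is given, for the general $k$-component case, in the proof of Lemma~\ref{lem: rayleigh}): you observe that $I_{C_0/a^2}$ and $\mathcal{R}_a$ coincide on the Pohozaev set, and then project an arbitrary $w\in T_a$ onto $\mathcal{P}(a,C_0/a^2)$ via the $L^2$-preserving dilation, which leaves the Rayleigh quotient unchanged. The only cosmetic difference is that you compute the dilation parameter $t$ explicitly and verify $I_{C_0/a^2}(w_t)=\mathcal{R}_a(w)$ by direct scaling, whereas the paper cites Lemma~\ref{lem: expression s bar many} for the existence of the projecting dilation and uses the scaling invariance of $\mathcal{R}$ together with Step~1.
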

\begin{proof}
We refer to the proof of the forthcoming Lemma \ref{lem: rayleigh}, where the corresponding equality is proved for systems, and which then includes the present result as a particular case.
\end{proof}

Let us recall the following Gagliardo-Nirenberg inequality: there exists a universal constant $S>0$ such that
\begin{equation}\label{Gagliardo-Nirenberg}
\int_{\R^3} w^4 \le S \left(\int_{\R^3} w^2\right)^{1/2} \left( \int_{\R^3} |\nabla w|^2 \right)^{3/2}  \qquad \text{for all }w \in H^1(\R^3).
\end{equation}
In particular, the optimal value of $S$ can be found as
\begin{equation}\label{relazione 102}
\frac{1}{S^2} = \inf_{w \in H^1(\R^3) \setminus \{0\}} \frac{ \left(\int_{\R^3} w^2 \right) \cdot \left( \int_{\R^3} |\nabla w|^2  \right)^{3} }{\left(\int_{\R^3} w^4\right)^2} = \inf_{w \in T_a} \frac{  a^2 \left( \int_{\R^3} |\nabla w|^2  \right)^{3}}{\left(\int_{\R^3} w^4\right)^2},
\end{equation}
where the last equality comes from the fact that the ratio on the right hand side is invariant with respect to multiplication of $w$ with a positive number.

\begin{lemma}\label{lem: value S}
In the previous notation, we have
\[
S^2 = \frac{64}{27C_0 C_1},
\]
where $C_0$ and $C_1$ have been defined in \eqref{def of constants}.
\end{lemma}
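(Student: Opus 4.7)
The plan is to combine the three pieces already available: the variational characterization of the least energy level in Proposition~\ref{prop: explicit} (with the specific choice $\mu=C_0/a^2$), the identification between that level and the Rayleigh quotient infimum given by Lemma~\ref{leas esnergy as ray 1}, and the characterization \eqref{relazione 102} of $1/S^2$ as a scale-invariant infimum restricted to $T_a$. Each of these expressions can be computed in closed form in terms of $C_0$, $C_1$ and $a$, and equating the two representations for the same quantity will force the claimed value of $S^2$.

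First I would specialize Proposition~\ref{prop: explicit} to $\mu=C_0/a^2$. Substituting into \eqref{espressione funionale} yields
\[
\ell(a,C_0/a^2)=\frac{C_0C_1}{8(C_0/a^2)^2 a^2}=\frac{C_1 a^2}{8C_0}.
\]
Since $w_a=w_{a,C_0/a^2}$ is the minimizer of $I_{a,C_0/a^2}$ on $\mathcal{P}(a,C_0/a^2)$, Lemma~\ref{leas esnergy as ray 1} tells us that the right-hand side above equals $\inf_{T_a}\mathcal{R}_a$.

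Next I would rewrite $\mathcal{R}_a$ explicitly:
\[
\mathcal{R}_a(w)=\frac{8a^4}{27C_0^2}\cdot\frac{\bigl(\int_{\R^3}|\nabla w|^2\bigr)^3}{\bigl(\int_{\R^3} w^4\bigr)^2}.
\]
By \eqref{relazione 102}, the infimum over $T_a$ of the last ratio equals $1/(a^2 S^2)$, so
\[
\inf_{T_a}\mathcal{R}_a=\frac{8a^4}{27C_0^2}\cdot\frac{1}{a^2 S^2}=\frac{8a^2}{27C_0^2\,S^2}.
\]
Equating this with $\ell(a,C_0/a^2)=C_1 a^2/(8C_0)$ gives
\[
\frac{C_1 a^2}{8C_0}=\frac{8a^2}{27C_0^2\,S^2},
\]
and solving for $S^2$ produces $S^2=64/(27C_0C_1)$, as claimed. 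The dependence on $a$ drops out, consistent with $S$ being a universal constant.

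There is no real obstacle: the only step to be slightly careful about is correctly tracking the factor $a^2$ between the two forms of \eqref{relazione 102} (the one on $H^1(\R^3)\setminus\{0\}$ versus the $T_a$-restricted one), but scale invariance of the Gagliardo-Nirenberg ratio makes this automatic. All the hard analytic work (uniqueness of $w_0$, the Pohozaev-based natural constraint, and the Rayleigh-quotient identification) has already been carried out in the preceding lemmas.
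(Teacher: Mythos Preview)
Your proof is correct and follows essentially the same approach as the paper: both combine Proposition~\ref{prop: explicit} (specialized to $\mu=C_0/a^2$), Lemma~\ref{leas esnergy as ray 1}, and the characterization \eqref{relazione 102} of $1/S^2$, differing only in the order in which these identities are chained together.
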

\begin{proof}
Multiplying and dividing the last term in \eqref{relazione 102} by $8a^2 /(27 C_0^2)$, we deduce that
\[
\frac{1}{S^2} = \frac{27C_0^2}{8 a^2}   \inf_{w \in T_a} \mathcal{R}_a(w).
\]
Hence, by Proposition \ref{prop: explicit} and Lemma \ref{leas esnergy as ray 1}, we infer that
\[
\frac{1}{S^2}= \frac{27C_0^2}{8 a^2}  I_{C_0/a^2}(w_{a,C_0/a^2})  = \frac{27C_0 C_1 }{64}. \qedhere
\]
\end{proof}

\section{Proof of Theorem \ref{thm: existence for beta small}}

This section is devoted to the proof of Theorem \ref{thm: existence for beta small}, which is based upon a two-dimensional linking argument.

In order to avoid compactness issues, we work in a radial setting. This means that we search for solutions of \eqref{system}-\eqref{normalization} as critical points of $J$ constrained on $S_{a_1} \times S_{a_2}$, where for any $a \in \R$ the set $S_a$ is defined by
\begin{equation}\label{sphere radial}
S_a:= \left\{ w \in H^1_{\rad}(\R^3): \int_{\R^3} w^2= a^2 \right\},
\end{equation}
and $H^1_{\rad}(\R^3)$ denotes the subset of $H^1(\R^3)$ containing all the functions which are radial with respect to the origin. Recall that $H^1_{\rad}(\R^3) \hookrightarrow L^4(\R^3)$ with compact embedding, and the fact that critical points of $J$ constrained on $S_{a_1} \times S_{a_2}$ (thus in a radial setting) are true critical points of $J$ constrained in the full product $T_{a_1} \times T_{a_2}$ is a consequence of the Palais' principle of symmetric criticality.

\medskip

In order to describe the minimax structure, it is convenient to introduce some notation. We define, for $s \in \R$ and $w \in H^1(\R^3)$, the radial dilation
\begin{equation}\label{def star}
(s \star w)(x) := e^{\frac{3s}{2}} w(e^s x).
\end{equation}
It is straightforward to check that if $w \in S_a$, then $(s \star w) \in S_a$ for every $s\in\R$.

\begin{lemma}\label{lem: basic properties radial dilation}
For every $\mu >0$ and $w \in H^1(\R^3)$, there holds:
\begin{align*}
I_\mu(s \star w) &= \frac{e^{2s}}{2} \int_{\R^3} |\nabla w|^2 - \frac{e^{3s}}{4}\mu \int_{\R^3} w^4 \\
\frac{\pa}{\pa s} I_\mu(s \star w) &= e^{2s} \left( \int_{\R^3} |\nabla w|^2 - \frac{3 e^{s}}{4} \mu\int_{\R^3} w^4\right).
\end{align*}
In particular, if $w=w_{a,\mu}$, then
\[
\frac{\pa}{\pa s} I_\mu(s \star w_{a,\mu}) \text{ is } \begin{cases} >0 & \text{if $s<0$} \\ = 0& \text{if $s=0$} \\ <0 & \text{if $s>0$}. \end{cases}
\]
\end{lemma}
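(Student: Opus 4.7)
The plan is to treat this as a direct computation: substitute the definition of $s\star w$, change variables, differentiate, and then invoke the Pohozaev identity at the end.

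First, I would compute the two integrals $\int_{\R^3}|\nabla(s\star w)|^2$ and $\int_{\R^3}(s\star w)^4$ separately. Using $(s\star w)(x)=e^{3s/2}w(e^s x)$, the chain rule gives $\nabla(s\star w)(x)=e^{3s/2}e^s(\nabla w)(e^s x)$, so $|\nabla(s\star w)|^2=e^{5s}|\nabla w(e^s x)|^2$. The substitution $y=e^sx$ (with Jacobian $e^{-3s}$) then yields $\int|\nabla(s\star w)|^2 = e^{2s}\int|\nabla w|^2$. The same substitution on $(s\star w)^4=e^{6s}w(e^sx)^4$ gives $\int(s\star w)^4 = e^{3s}\int w^4$. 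Plugging into the definition \eqref{scalar functional} of $I_\mu$ yields the first identity immediately.

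Next, I would just differentiate this polynomial in $e^s$: since $\frac{d}{ds}e^{2s}=2e^{2s}$ and $\frac{d}{ds}e^{3s}=3e^{3s}$, one obtains
\[
\frac{\pa}{\pa s}I_\mu(s\star w) = e^{2s}\int_{\R^3}|\nabla w|^2 - \frac{3e^{3s}}{4}\mu\int_{\R^3}w^4,
\]
from which the factored form follows by pulling out $e^{2s}$.

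For the sign claim I specialize to $w=w_{a,\mu}$. By Lemma~\ref{lem: natural constraint}, $w_{a,\mu}\in\mathcal{P}(a,\mu)$, so $\int|\nabla w_{a,\mu}|^2 = \frac{3\mu}{4}\int w_{a,\mu}^4$. Substituting into the derivative formula gives
\[
\frac{\pa}{\pa s}I_\mu(s\star w_{a,\mu}) = \frac{3\mu\, e^{2s}}{4}\left(\int_{\R^3}w_{a,\mu}^4\right)(1-e^s),
\]
and since the prefactor is strictly positive, the sign is governed entirely by $1-e^s$, which is positive, zero, or negative according as $s<0$, $s=0$, or $s>0$. There is no real obstacle here — the whole statement is a bookkeeping exercise once the scaling behaviour of the two norms is computed; the only input beyond algebra is the Pohozaev identity encoded in $\mathcal{P}(a,\mu)$, which is already established.
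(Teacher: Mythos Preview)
Your proof is correct and follows essentially the same approach as the paper: change of variables for the first identity, direct differentiation for the second, and the Pohozaev constraint $w_{a,\mu}\in\mathcal{P}(a,\mu)$ from Lemma~\ref{lem: natural constraint} to locate the critical point at $s=0$. The only cosmetic difference is that the paper first writes the unique zero of the derivative as $e^{\bar s}=\frac{4\int|\nabla w_{a,\mu}|^2}{3\mu\int w_{a,\mu}^4}$ and then observes $\bar s=0$, whereas you substitute the Pohozaev relation immediately to factor out $(1-e^s)$; the content is identical.
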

For the reader's convenience, we recall that $I_\mu$ denotes the functional for the scalar equation, see \eqref{scalar functional}, and $w_{a,\mu}$ has been defined in Proposition \ref{prop: explicit}.
\begin{proof}
For the first part, it is sufficient to use the definition of $s \star w$ and a change of variables in the integrals. For the second part, we observe that
\[
\frac{\pa}{\pa s} I_\mu(s \star w_{a,\mu}) \text{ is } \begin{cases} >0 & \text{if $s<\bar s$} \\ = 0& \text{if $s=\bar s$} \\ <0 & \text{if $s>\bar s$}, \end{cases}
\]
where $\bar s \in \R$ is uniquely defined by
\[
e^{\bar s} = \frac{4 \int_{\R^3} |\nabla w_{a,\mu}|^2}{3\mu\int_{\R^3} w_{a,\mu}^4}.
\]
Recalling that $w_{a,\mu} \in \mathcal{P}(a,\mu)$, see Lemma \ref{lem: natural constraint}, we deduce that $e^{\bar s}=1$, i.e. $\bar s = 0$.
\end{proof}

For $a_1,a_2,\mu_1,\mu_2>0$ let $\beta_1=\beta_1(a_1,a_2,\mu_1,\mu_2)>0$ be defined by the equation:
\begin{equation}\label{def_beta_1}
\max\left\{\frac{1}{a_1^2 \mu_1^2}, \frac{1}{a_2^2 \mu_2^2} \right\} = \frac{1}{a_1^2 (\mu_1+\beta_1)^2} + \frac{1}{a_2^2 (\mu_2+\beta_1)^2}
\end{equation}

\begin{lemma}\label{lem: small 1} For $0<\beta<\beta_1$ there holds:
\[
\inf\left\{J(u,v):(u,v)\in\mathcal{P}(a_1,\mu_1+\beta)\times \mathcal{P}(a_2,\mu_2+\beta)\right\}
 > \max\{ \ell(a_1,\mu_1),\ell(a_2,\mu_2)\}
\]
where $\ell(a_i,\mu_i)$ is defined by \eqref{espressione funionale}.
\end{lemma}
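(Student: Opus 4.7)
The plan is to rewrite $J(u,v)$ so that the unfavorable mixed term $-\tfrac{\beta}{2}\int u^2v^2$ can be absorbed into the diagonal parts, at the cost of replacing $\mu_i$ by $\mu_i+\beta$. Concretely, from the pointwise inequality $2u^2v^2 \le u^4 + v^4$ we get
\[
-\frac{\beta}{2}\int_{\R^3} u^2 v^2 \;\ge\; -\frac{\beta}{4}\int_{\R^3} u^4 \;-\; \frac{\beta}{4}\int_{\R^3} v^4,
\]
so that, recalling the definition \eqref{scalar functional} of $I_\mu$,
\[
J(u,v) \;\ge\; I_{\mu_1+\beta}(u) + I_{\mu_2+\beta}(v).
\]

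Now if $(u,v)\in\mathcal{P}(a_1,\mu_1+\beta)\times\mathcal{P}(a_2,\mu_2+\beta)$, Lemma~\ref{lem: natural constraint} (applied with $\mu$ replaced by $\mu_1+\beta$ and $\mu_2+\beta$ respectively) asserts that $w_{a_i,\mu_i+\beta}$ minimizes $I_{\mu_i+\beta}$ on $\mathcal{P}(a_i,\mu_i+\beta)$, hence
\[
I_{\mu_1+\beta}(u) \;\ge\; \ell(a_1,\mu_1+\beta), \qquad I_{\mu_2+\beta}(v) \;\ge\; \ell(a_2,\mu_2+\beta).
\]
Combined with the previous inequality and the explicit formula \eqref{espressione funionale}, this yields
\[
J(u,v) \;\ge\; \frac{C_0 C_1}{8}\left( \frac{1}{a_1^2(\mu_1+\beta)^2} + \frac{1}{a_2^2(\mu_2+\beta)^2}\right).
\]

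It remains to compare the right-hand side with $\max\{\ell(a_1,\mu_1),\ell(a_2,\mu_2)\}=\frac{C_0C_1}{8}\max\{\frac{1}{a_1^2\mu_1^2},\frac{1}{a_2^2\mu_2^2}\}$. The function
\[
\beta \;\longmapsto\; \frac{1}{a_1^2(\mu_1+\beta)^2} + \frac{1}{a_2^2(\mu_2+\beta)^2}
\]
is strictly decreasing on $[0,\infty)$, equals $\frac{1}{a_1^2\mu_1^2}+\frac{1}{a_2^2\mu_2^2}>\max\{\frac{1}{a_1^2\mu_1^2},\frac{1}{a_2^2\mu_2^2}\}$ at $\beta=0$, and tends to $0$ as $\beta\to\infty$; by \eqref{def_beta_1} it coincides with this maximum precisely at $\beta=\beta_1$. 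Hence for $\beta\in(0,\beta_1)$ the strict inequality
\[
\frac{1}{a_1^2(\mu_1+\beta)^2} + \frac{1}{a_2^2(\mu_2+\beta)^2} \;>\; \max\!\left\{\frac{1}{a_1^2\mu_1^2},\,\frac{1}{a_2^2\mu_2^2}\right\}
\]
holds uniformly, yielding the desired bound. No serious obstacle is expected; the only point requiring slight care is to check that the pointwise bound on $u^2v^2$ gives the cleanest absorption into the two decoupled scalar functionals, thereby reducing the problem entirely to the known minimization of $I_{\mu_i+\beta}$ on $\mathcal{P}(a_i,\mu_i+\beta)$.
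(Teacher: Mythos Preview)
Your proof is correct and follows essentially the same route as the paper: absorb the mixed term via $2u^2v^2\le u^4+v^4$ to reduce $J(u,v)\ge I_{\mu_1+\beta}(u)+I_{\mu_2+\beta}(v)$, minimize each piece on its Pohozaev set to obtain $\ell(a_1,\mu_1+\beta)+\ell(a_2,\mu_2+\beta)$, and then use the explicit formula \eqref{espressione funionale} together with the definition \eqref{def_beta_1} of $\beta_1$. Your monotonicity argument for the final comparison is a slightly more explicit version of what the paper dismisses as ``clearly''.
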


\begin{proof}
Using Young's inequality and recalling the definition of $I_\mu$ (see \eqref{scalar functional}), we obtain for $(u,v) \in \mathcal{P}(a_1,\mu_1+\beta)\times \mathcal{P}(a_2,\mu_2+\beta)$:
\begin{align*}
J(u,v) & = I_{\mu_1}(u) + I_{\mu_2}(v) -\frac{\beta}{2} \int_{\R^3} u^2 v^2 \\
& \ge  I_{\mu_1}(u) + I_{\mu_2}(v)  - \frac{\beta}{4} \int_{\R^3} u^4 - \frac{\beta}{4} \int_{\R^3} v^4 \\
 & =  I_{\mu_1+\beta}(u) + I_{\mu_2+\beta}(v) \ge \inf_{u \in \mathcal{P}(a_1,\mu_1+\beta)}  I_{\mu_1+\beta}(u) + \inf_{v \in \mathcal{P}(a_2,\mu_2+\beta)} I_{\mu_2+\beta}(v) \\
 & = \ell(a_1,\mu_1+\beta) + \ell(a_2,\mu_2+\beta)
\end{align*}
Therefore, the claim is satisfied provided
\[
\max\{\ell(a_1,\mu_1), \ell(a_2,\mu_2)\} < \ell(a_1,\mu_1+\beta) + \ell(a_2,\mu_2+\beta),
\]
that is (by Proposition \ref{prop: explicit})
\begin{equation}\label{eq for beta small 1}
\max\left\{ \frac{C_0 C_1}{8a_1^2 \mu_1^2}, \frac{C_0 C_1}{8a_2^2 \mu_2^2}\right\}
 < \frac{C_0 C_1}{8a_1^2 (\mu_1+\beta)^2} + \frac{C_0 C_1}{8a_2^2 (\mu_2+\beta)^2}.
\end{equation}
Clearly, this holds for $0 < \beta < \beta_1$.
\end{proof}

Now we fix $0<\beta<\beta_1=\beta_1(a_1,a_2,\mu_1,\mu_2)$ and choose $\eps>0$ such that
\begin{equation}\label{eq:small}
\begin{split}
\inf\left\{J(u,v):(u,v)\in\mathcal{P}(a_1,\mu_1+\beta)\times \mathcal{P}(a_2,\mu_2+\beta)\right\} \\
 > \max\{ \ell(a_1,\mu_1),\ell(a_2,\mu_2)\}+\eps.
\end{split}
\end{equation}
We introduce
\begin{equation}\label{notation w_i}
w_1:= w_{a_1,\mu_1 +\beta} \quad \text{and} \quad w_2:= w_{a_2,\mu_2+\beta},
\end{equation}
and for $i=1,2$,
\begin{equation}\label{def fis}
\varphi_i(s):= I_{\mu_i}( s \star w_{i} ) \quad \text{and} \quad \psi_i(s):= \frac{\pa}{\pa s} I_{\mu_i+\beta}(s \star w_i).
\end{equation}

\begin{lemma}\label{lem: small 3}
For $i=1,2$ there exists $\rho_i<0$ and $R_i>0$, depending on $\eps$ and on $\beta$, such that
\begin{itemize}
\item[($i$)] $0< \varphi_i(\rho_i)<\eps$ and $\varphi_i(R_i) \le 0$;
\item[($ii$)] $\psi_i(s)>0$ for any $s <0$ and $\psi_i(s) <0$ for every $s>0$. In particular, $\psi_i(\rho_i) >0$ and $\psi_i(R_i)<0$.
\end{itemize}
\end{lemma}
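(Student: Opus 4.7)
The plan is to handle (ii) first, since it follows almost verbatim from Lemma \ref{lem: basic properties radial dilation}, and then to choose $\rho_i,R_i$ in (i) by a simple asymptotic analysis of $\varphi_i$.

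For (ii): by the definition \eqref{notation w_i}, $w_i = w_{a_i,\mu_i+\beta}$ is the unique positive solution of the scalar problem \eqref{scalar pb with mass} with mass $a_i$ and coefficient $\mu_i+\beta$, and therefore, by Lemma \ref{lem: natural constraint}, $w_i\in\mathcal{P}(a_i,\mu_i+\beta)$. Applying the last statement of Lemma \ref{lem: basic properties radial dilation} with $\mu=\mu_i+\beta$ and $w=w_i$ shows that $\psi_i(s)>0$ for $s<0$, $\psi_i(0)=0$, and $\psi_i(s)<0$ for $s>0$. Thus any choice of $\rho_i<0<R_i$ will satisfy the sign conditions in (ii).

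For (i): set $A_i:=\int_{\R^3}|\nabla w_i|^2>0$ and $B_i:=\int_{\R^3} w_i^4>0$. By Lemma \ref{lem: basic properties radial dilation},
\[
\varphi_i(s)=\frac{e^{2s}}{2}A_i-\frac{e^{3s}}{4}\mu_i B_i
=\frac{e^{2s}}{4}\left(2A_i-e^{s}\mu_i B_i\right).
\]
In particular $\varphi_i$ is continuous, $\varphi_i(s)\to 0^{+}$ as $s\to-\infty$ (the factor in brackets tends to $2A_i>0$, while $e^{2s}\to 0^+$), and $\varphi_i(s)\to-\infty$ as $s\to+\infty$ (the factor in brackets tends to $-\infty$, and $e^{2s}\to+\infty$). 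Consequently, for $\rho_i<0$ sufficiently negative we have $0<\varphi_i(\rho_i)<\eps$, and for $R_i>0$ sufficiently large we have $\varphi_i(R_i)\le 0$. Combined with the sign information from (ii) at these same values of $s$, this completes the proof.

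The only thing to watch is the sign of $\varphi_i$ near $s=-\infty$: since $\varphi_i$ equals a positive number times $(2A_i-e^s\mu_i B_i)$, it is genuinely \emph{strictly positive} for all $s<\log(2A_i/(\mu_i B_i))$, which guarantees that the interval on which $\rho_i$ may be chosen is non-empty. There is no real obstacle here — the lemma is a routine record of the shape of $\varphi_i$ and $\psi_i$, prepared for the linking argument to follow.
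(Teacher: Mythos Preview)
Your proof is correct and follows essentially the same approach as the paper's own argument: both deduce the asymptotics $\varphi_i(s)\to 0^+$ as $s\to-\infty$ and $\varphi_i(s)\to-\infty$ as $s\to+\infty$ from Lemma~\ref{lem: basic properties radial dilation} to obtain~(i), and both obtain~(ii) directly from the last statement of that same lemma applied with $\mu=\mu_i+\beta$ and $w=w_i=w_{a_i,\mu_i+\beta}$. Your write-up simply spells out the computations in more detail.
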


\begin{proof}
By Lemma \ref{lem: basic properties radial dilation}, we deduce that $\varphi_i(s) \to 0^+$ as $s \to -\infty$, and $\varphi_i(s) \to -\infty$ as $s \to +\infty$. Thus there exist $\rho_i$ and $R_i$ satisfying ($i$). Condition ($ii$) follows directly from Lemma \ref{lem: basic properties radial dilation}.
\end{proof}

Let $Q:= [\rho_1,R_1] \times [\rho_2,R_2]$, and let
\[
\gamma_0(t_1,t_2) := \left( t_1\star w_1, t_2 \star w_2 \right) \in S_{a_1} \times S_{a_2} \qquad \forall (t_1,t_2) \in \overline{Q}.
\]
We introduce the minimax class
\[
\Gamma:= \left\{ \gamma \in \mathcal{C}\left(\overline{Q}, S_{a_1} \times S_{a_2}\right): \text{$\gamma = \gamma_0$ on $\pa Q$} \right\}.
\]
The minimax structure of the problem is enlightened by \eqref{eq:small} and the following two lemmas.

\begin{lemma}\label{lem: minimax inequality}
There holds
\[
\sup_{\pa Q} J(\gamma_0) \le \max\{ \ell(a_1,\mu_1),\ell(a_2,\mu_2)\} + \eps.
\]
\end{lemma}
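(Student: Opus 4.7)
The plan is to bound $J(\gamma_0(t_1,t_2))$ on $\partial Q$ by reducing everything to the one-variable functions $\varphi_1,\varphi_2$, and then to show that each $\varphi_i$ never exceeds the scalar ground state level $\ell(a_i,\mu_i)$.

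First I would split the functional. Writing $u = t_1 \star w_1$, $v = t_2 \star w_2$, I get
\[
J(\gamma_0(t_1,t_2)) \;=\; I_{\mu_1}(t_1 \star w_1) + I_{\mu_2}(t_2 \star w_2) - \frac{\beta}{2} \int_{\R^3} (t_1 \star w_1)^2 (t_2 \star w_2)^2.
\]
Because $\beta>0$, the coupling term is non-positive, hence
\[
J(\gamma_0(t_1,t_2)) \;\le\; \varphi_1(t_1) + \varphi_2(t_2) \qquad \text{for every } (t_1,t_2)\in\overline{Q}.
\]

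The key step is the identity
\[
\max_{s\in\R}\varphi_i(s) \;=\; \ell(a_i,\mu_i), \qquad i=1,2.
\]
This I would prove by direct calculation. Using Lemma~\ref{lem: basic properties radial dilation} and the explicit formulas \eqref{int grad} and \eqref{int fourth} of Proposition~\ref{prop: explicit} applied to $w_i = w_{a_i,\mu_i+\beta}$, I can write
\[
\varphi_i(s) \;=\; \frac{C_0 C_1}{a_i^2}\left[\frac{3 e^{2s}}{8(\mu_i+\beta)^2} - \frac{\mu_i\, e^{3s}}{4(\mu_i+\beta)^3}\right].
\]
Solving $\varphi_i'(s)=0$ gives $e^{s^\ast} = (\mu_i+\beta)/\mu_i$, and substituting back produces $\varphi_i(s^\ast) = \frac{C_0 C_1}{8\mu_i^2 a_i^2} = \ell(a_i,\mu_i)$, in view of \eqref{espressione funionale}. (Alternatively, one can derive this from the Gagliardo--Nirenberg characterization of Lemmas~\ref{leas esnergy as ray 1}--\ref{lem: value S}, since $w_i$ is a rescaling of $w_0$ and therefore an optimizer.)

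Finally I would check the four sides of $\partial Q$ separately, using Lemma~\ref{lem: small 3}($i$):
\[
\begin{aligned}
t_1=\rho_1:&\quad \varphi_1(\rho_1) < \eps,\ \varphi_2(t_2)\le \ell(a_2,\mu_2),\quad \text{so } J(\gamma_0)\le \ell(a_2,\mu_2)+\eps,\\
t_1=R_1:&\quad \varphi_1(R_1)\le 0,\ \varphi_2(t_2)\le \ell(a_2,\mu_2),\quad \text{so } J(\gamma_0)\le \ell(a_2,\mu_2),\\
t_2=\rho_2:&\quad J(\gamma_0)\le \ell(a_1,\mu_1)+\eps,\\
t_2=R_2:&\quad J(\gamma_0)\le \ell(a_1,\mu_1).
\end{aligned}
\]
Taking the supremum yields the claim. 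The only slightly delicate point is the explicit max computation above, but it is a routine one-variable optimization once the formulas of Proposition~\ref{prop: explicit} are in hand; there is no real analytic obstacle.
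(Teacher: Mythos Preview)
Your proof is correct and follows essentially the same approach as the paper: drop the coupling term using $\beta>0$, reduce to $\varphi_1(t_1)+\varphi_2(t_2)$, show $\max_s\varphi_i(s)=\ell(a_i,\mu_i)$, and then check the four sides of $\partial Q$ using Lemma~\ref{lem: small 3}($i$). The only cosmetic difference is in how the identity $\max_s\varphi_i(s)=\ell(a_i,\mu_i)$ is obtained: the paper observes that $w_{a_i,\mu_i}=\bar s_i\star w_i$ for a suitable $\bar s_i$ (so the two families $\{s\star w_i\}_s$ and $\{s\star w_{a_i,\mu_i}\}_s$ coincide, and the maximum of $I_{\mu_i}$ along the latter is $\ell(a_i,\mu_i)$ by Lemma~\ref{lem: basic properties radial dilation}), whereas you compute the maximum directly from the explicit integrals in Proposition~\ref{prop: explicit}; both are routine and equivalent.
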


\begin{proof}
Notice that
\[
J(u,v) = I_{\mu_1}(u) + I_{\mu_2}(v) -\frac{\beta}{2} \int_{\R^3} u^2 v^ 2 \le I_{\mu_1}(u) + I_{\mu_2}(v)
\]
for every $(u,v) \in S_{a_1} \times S_{a_2}$, since $\beta>0$. Therefore, by Lemma \ref{lem: small 3} we infer that
\begin{align*}
J (t_1 \star w_1,\rho_2 \star w_2) & \le I_{\mu_1}(t_1 \star w_1) + I_{\mu_2}(\rho_2 \star w_2) \le I_{\mu_1}(t_1 \star w_1) + \eps \\
& \le \sup_{s \in \R}  I_{\mu_1}(s \star w_1)  + \eps.
\end{align*}
In order to estimate the last term, by Proposition \ref{prop: explicit} it is easy to check that
\[
w_{a_i,\mu_i} = (\bar s_i \star w_i) \quad \text{for} \quad e^{\bar s_i}:= \frac{4 \int_{\R^3}|\nabla w_i|^2}{3 \int_{\R^3} \mu_i w_i^4} = \frac{\mu_i +\beta}{\mu_i}.
\]
As a consequence, observing also that $s_1 \star (s_2 \star w)= (s_1+s_2) \star w$ for every $s_1,s_2 \in \R$ and $w \in H^1(\R^3)$, we have
\begin{equation}\label{remember1}
\sup_{s \in \R} I_{\mu_1}(s \star w_1) = \sup_{s \in \R} I_{\mu_1}(s \star w_{a_1,\mu_1}).
\end{equation}
 As a consequence of Lemma \ref{lem: basic properties radial dilation} the supremum on the right hand side is achieved for $s=0$, and hence
\begin{equation}\label{small: upper est 1}
J (t_1 \star w_1,\rho_2 \star w_2) \le \ell(a_1,\mu_1) + \eps \qquad \forall t_1 \in [\rho_1,R_1],
\end{equation}
and in a similar way one can show that
\begin{equation}\label{small: upper est 2}
J (\rho_1 \star w_1,t_2 \star w_2) \le \ell(a_2,\mu_2) +\eps \qquad \forall t_2 \in [\rho_2,R_2].
\end{equation}
The value of $J(\gamma_0)$ on the remaining sides of $\pa Q$ is smaller: indeed by Lemma \ref{lem: small 3}  and \eqref{remember1}
\begin{equation}\label{small: upper est 3}
\begin{split}
J (t_1 \star w_1,R_2 \star w_2) &\le I_{\mu_1}(t_1 \star w_1) + I_{\mu_2}(R_2 \star w_2)  \\
& \le \sup_{s \in \R} I_{\mu_1}(s \star w_1) = \ell (a_1,\mu_1)
\end{split}
\end{equation}
for every $t_1 \in [\rho_1,R_1]$, and analogously
\begin{equation}\label{small: upper est 4}
J (R_1 \star w_1,t_2 \star w_2) \le \ell (a_2,\mu_2) \qquad \forall t_2 \in [\rho_2,R_2].
\end{equation}
Collecting together \eqref{small: upper est 1}-\eqref{small: upper est 4}, the thesis follows.
\end{proof}


Now we show that the class $\Gamma$ ``links" with $\mathcal{P}(a_1,\mu_1+\beta) \times \mathcal{P}(a_2,\mu_2+\beta)$.
\begin{lemma}\label{lem: linking condition}
For every $\gamma \in \Gamma$, there exists $(t_{1,\gamma},t_{2,\gamma}) \in Q$ such that $\gamma(t_{1,\gamma},t_{2,\gamma}) \in \mathcal{P}(a_1,\mu_1+\beta) \times \mathcal{P}(a_2,\mu_2+\beta)$.
\end{lemma}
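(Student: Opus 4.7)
The plan is to define an auxiliary map $F\from \overline{Q}\to\R^2$ whose zeros correspond exactly to points $(t_1,t_2)\in Q$ with $\gamma(t_1,t_2)\in\mathcal{P}(a_1,\mu_1+\beta)\times\mathcal{P}(a_2,\mu_2+\beta)$, and then to invoke a Poincar\'e--Miranda / Brouwer degree argument to produce such a zero. Concretely, for $(t_1,t_2)\in\overline{Q}$ write $(u_\gamma,v_\gamma)=\gamma(t_1,t_2)\in S_{a_1}\times S_{a_2}$ and set
\[
F(t_1,t_2):=\left(\int_{\R^3}|\nabla u_\gamma|^2-\tfrac{3(\mu_1+\beta)}{4}\int_{\R^3}u_\gamma^4,\ \int_{\R^3}|\nabla v_\gamma|^2-\tfrac{3(\mu_2+\beta)}{4}\int_{\R^3}v_\gamma^4\right).
\]
By the definition \eqref{def of P} of $\mathcal{P}(a,\mu)$, it suffices to find a zero of $F$ in $Q$. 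Continuity of $F$ on $\overline{Q}$ follows from continuity of $\gamma\in\mathcal{C}(\overline{Q},H^1_{\rad}\times H^1_{\rad})$ together with the continuous embedding $H^1(\R^3)\hookrightarrow L^4(\R^3)$.

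The core of the argument is to read off $F|_{\partial Q}$ using the boundary condition $\gamma=\gamma_0$. For $(t_1,t_2)\in\partial Q$ we have $u_\gamma=t_1\star w_1$ and $v_\gamma=t_2\star w_2$; applying Lemma \ref{lem: basic properties radial dilation} to $w_i=w_{a_i,\mu_i+\beta}$ gives
\[
F(t_1,t_2)=(\psi_1(t_1),\,\psi_2(t_2))\qquad\text{on $\partial Q$},
\]
with $\psi_i$ as in \eqref{def fis}. By Lemma \ref{lem: small 3}(ii) one has $\psi_i(\rho_i)>0$ and $\psi_i(R_i)<0$. Hence on the left face $\{\rho_1\}\times[\rho_2,R_2]$ the component $F_1$ is strictly positive, on the right face $F_1$ is strictly negative, and the analogous sign reversal for $F_2$ holds on the bottom and top faces.

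These are exactly the hypotheses of the Poincar\'e--Miranda theorem applied to the continuous map $F\from\overline{Q}\to\R^2$, which yields some $(t_{1,\gamma},t_{2,\gamma})\in\overline{Q}$ with $F(t_{1,\gamma},t_{2,\gamma})=0$; by the strict signs on $\partial Q$ this zero necessarily lies in $Q$. (Equivalently, one can homotope $F|_{\partial Q}$ to the linear map $(t_1,t_2)\mapsto(\rho_1+R_1-2t_1,\rho_2+R_2-2t_2)$ through non-vanishing boundary maps, so $\deg(F,Q,0)=1\neq 0$ and a zero exists.)

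There is essentially no analytical obstacle: the whole content lies in the correct choice of the test map $F$ and the identification $F|_{\partial Q}=(\psi_1,\psi_2)$. The only point requiring some care is that the admissible class is $\mathcal{P}(a_i,\mu_i+\beta)$ and not $\mathcal{P}(a_i,\mu_i)$; this matches the shift used in Lemma \ref{lem: small 1} and is exactly what makes $w_i=w_{a_i,\mu_i+\beta}$ (and therefore Lemma \ref{lem: small 3}) produce the right boundary signs.
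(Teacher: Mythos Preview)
Your proof is correct and follows essentially the same route as the paper: both define the same auxiliary map $F$ (the paper writes its components as $\left.\partial_s I_{\mu_i+\beta}(s\star\gamma_i)\right|_{s=0}$, which equals your expression), identify $F|_{\partial Q}$ with $(\psi_1(t_1),\psi_2(t_2))$ via the boundary condition $\gamma=\gamma_0$, and then conclude by a degree argument. The only cosmetic difference is that the paper computes the winding number of $F_{\gamma_0}(\partial^+Q)$ explicitly, whereas you invoke Poincar\'e--Miranda (and note the equivalent homotopy/degree formulation); these are the same topological fact.
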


\begin{proof}
For $\gamma \in \Gamma$, we use the notation $\gamma(t_1,t_2) = (\gamma_1(t_1,t_2),\gamma_2(t_1,t_2)) \in S_{a_1} \times S_{a_2}$. Let us consider the map $F_{\gamma}:Q \to \R^2$ defined by
\[
F_\gamma(t_1,t_2):= \left( \left. \frac{\pa}{\pa s} I_{\mu_1+\beta}(s \star \gamma_1(t_1,t_2))\right|_{s=0}, \left. \frac{\pa}{\pa s} I_{\mu_2+\beta}(s \star \gamma_2(t_1,t_2))\right|_{s=0} \right).
\]
From
\[
\begin{aligned}
&\left. \frac{\pa}{\pa s} I_{\mu_i+\beta}(s \star \gamma_i(t_1,t_2))\right|_{s=0} \\
&\hspace{1cm}
 = \left. \frac{\pa }{\pa s}\left( \frac{e^{2s}}{2}\int_{\R^3} |\nabla \gamma_i(t_1,t_2)|^2
    - \frac{e^{3s}}{4}(\mu_i+\beta)\int_{\R^3} \gamma_i^4(t_1,t_2)\right) \right|_{s=0} \\
&hspace{1cm}
 = \int_{\R^3}|\nabla \gamma_i(t_1,t_2)|^2 - \frac34(\mu_i+\beta)\int_{\R^3}\gamma_i^4(t_1,t_2)
\end{aligned}
\]
we deduce that
\[
F_\gamma(t_1,t_2) = (0,0) \quad \text{if and only if} \quad \gamma(t_1,t_2) \in \mathcal{P}(a_1,\mu_1+\beta) \times \mathcal{P}(a_2,\mu_2+\beta).
\]
In order to show that $F_\gamma(t_1,t_2) = (0,0)$ has a solution in $Q$ for every $\gamma \in \Gamma$, we can check that the oriented path $F_\gamma(\pa^+Q)$ has winding number equal to $1$ with respect to the origin of $\R^2$, so that standard degree theory gives the desired result. In doing this, we observe at first that $F_\gamma(\pa^+ Q) = F_{\gamma_0}(\pa^+ Q)$ depends only on the choice of $\gamma_0$, and not on $\gamma$. Then we compute
\begin{multline*}
F_{\gamma_0}(t_1,t_2) = \left( e^{2t_1}\left( \int_{\R^3} |\nabla w_1|^2 - \frac{3e^{t_1}}{4}(\mu_1+\beta)\int_{\R^3}  w_1^4\right), \right. \\
\left. e^{2t_2}\left( \int_{\R^3} |\nabla w_2|^2 - \frac{3e^{t_2}}{4}(\mu_1+\beta)\int_{\R^3}  w_2^4\right) \right) = ( \psi_1(t_1), \psi_2(t_2) ),
\end{multline*}
where we recall that the definition of $\psi_i$ has been given in \eqref{def fis}. Therefore, the restriction of $F_{\gamma_0}$ on $\partial Q$ is completely described by Lemma \ref{lem: small 3}-($ii$), see the picture below:
\begin{center}
\begin{tikzpicture}[>= stealth, scale=0.6]
\draw[->] (-3,1)--(3,1);
\draw[->] (1,-3)--(1,3);
\draw[->, thick] (-2,-2)--(0,-2);
\draw[thick] (0,-2) -- (2,-2);
\draw[->, thick] (2,-2)--(2,0);
\draw[thick] (2,0) -- (2,2);
\draw[->, thick] (2,2)--(0,2);
\draw[thick] (0,2) -- (-2,2);
\draw[->, thick] (-2,2)--(-2,0);
\draw[thick] (-2,0) -- (-2,-2);
\draw (-2.3,0.7)[font=\footnotesize] node{$\rho_1$};
\draw (0.7,-2.3)[font=\footnotesize] node{$\rho_2$};
\draw (2.4,0.7)[font=\footnotesize] node{$R_1$};
\draw (0.7,2.4)[font=\footnotesize] node{$R_2$};
\draw (-2.3,-0.3)[font=\footnotesize] node{$l_4$};
\draw (-0.3,-2.3)[font=\footnotesize] node{$l_1$};
\draw (-0.3,2.4)[font=\footnotesize] node{$l_3$};
\draw (2.4,-0.3)[font=\footnotesize] node{$l_2$};
\draw (-1,-1)[font=\footnotesize] node{$Q$};
\end{tikzpicture}
\qquad \begin{tikzpicture}[>= stealth, scale=0.8]
\draw[->] (-2,0)--(2,0);
\draw[->] (0,-2)--(0,2);
\draw[->, thick] (1.2,0.8)--(0,0.8);
\draw[thick] (0,0.8) -- (-1,0.8);
\draw (-0.5,1.2)[font=\footnotesize] node{$F(l_1)$};
\draw[->, thick] (-1,0.8)--(-1,0);
\draw[thick] (-1,0) -- (-1,-1.3);
\draw (-1.5,-0.3)[font=\footnotesize] node{$F(l_2)$};
\draw[->, thick] (-1,-1.3)--(0,-1.3);
\draw[thick] (0,-1.3) -- (1.2,-1.3);
\draw (0.4,-1.73)[font=\footnotesize] node{$F(l_3)$};
\draw[->, thick] (1.2,-1.3)--(1.2,0);
\draw[thick] (1.2,0) -- (1.2,0.8);
\draw (1.7,0.4)[font=\footnotesize] node{$F(l_4)$};
\end{tikzpicture}
\end{center}
In particular, we have that the topological degree
\[
\deg( F_{\gamma},Q,(0,0)) = \iota(F_{\gamma_0}(\pa^+ Q), (0,0) ) = 1,
\]
where $\iota(\sigma,P)$ denotes the winding number of the curve $\sigma$ with respect to the point $P$.
Hence there exists $(t_{1,\gamma},t_{2,\gamma}) \in Q$ such that $F_{\gamma}(t_{1,\gamma},t_{2,\gamma}) = (0,0)$, which, as observed, is the desired result.
\end{proof}

Lemmas \ref{lem: minimax inequality} and \ref{lem: linking condition} permit to apply the minimax principle (Theorem 3.2 in \cite{Ghoussoub}) to $J$ on $\Gamma$. In this way, we could obtain a Palais-Smale sequence for the constrained functional $J$ on $S_{a_1} \times S_{a_2}$, but the boundedness of the Palais-Smale sequence would be unknown. In order to find a bounded Palais-Smale sequence, we shall adapt the trick introduced by one of the authors in \cite{Jeanjean} in the present setting.

\begin{lemma}\label{lem: application minimax}
There exists a Palais-Smale sequence $(u_n,v_n)$ for $J$ on $S_{a_1} \times S_{a_2}$ at the level
\[
c := \inf_{\gamma \in \Gamma} \max_{(t_1,t_2) \in Q} J(\gamma(t_1,t_2)) > \max \{\ell(a_1,\mu_1), \ell(a_2,\mu_2)\},
\]
satisfying the additional condition
\begin{equation}\label{weak Pohozaev}
\int_{\R^3} \left(|\nabla u_n|^2 + |\nabla v_n|^2\right) - \frac{3}{4}\left( \int_{\R^3} \mu_1 u_n^4 + \mu_2 v_n^4 + 2 \beta u_n^2 v_n^2 \right) = o(1),
\end{equation}
where $o(1) \to 0$ as $n \to \infty$. Furthermore, $u_n^-,v_n^- \to 0$ a.e. in $\R^3$ as $n \to \infty$.
\end{lemma}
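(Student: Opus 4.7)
My plan is to adapt the trick of \cite{Jeanjean} -- adjoining a scaling parameter to the ambient space -- in order to extract from the class $\Gamma$ a Palais-Smale sequence that automatically satisfies the asymptotic Pohozaev identity \eqref{weak Pohozaev}. A direct application of a minimax theorem to $J$ on $S_{a_1}\times S_{a_2}$ would produce a PS sequence at level $c$, but without any control over its $H^1$-boundedness; the point of enlarging the functional is precisely to deliver that asymptotic identity as part of the PS information for free.

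Concretely, I would introduce
$$H: S_{a_1}\times S_{a_2}\times\R \to \R, \qquad H(u,v,s) := J(s\star u,\, s\star v),$$
which by Lemma \ref{lem: basic properties radial dilation} takes the explicit form
$$H(u,v,s) = \frac{e^{2s}}{2}\int_{\R^3}(|\nabla u|^2+|\nabla v|^2) - \frac{e^{3s}}{4}\int_{\R^3}\bigl(\mu_1 u^4+\mu_2 v^4+2\beta u^2 v^2\bigr).$$
Since $s\star\cdot$ preserves the $L^2$-norm, $H$ is well defined on the indicated manifold. I would then consider the augmented minimax class
$$\tilde\Gamma:=\left\{\tilde\gamma\in\mathcal{C}(\overline{Q},\,S_{a_1}\times S_{a_2}\times\R):\tilde\gamma|_{\partial Q}=(\gamma_0,0)\right\}$$
and verify $\tilde c:=\inf_{\tilde\gamma\in\tilde\Gamma}\max_Q H(\tilde\gamma)=c$: the inequality $\tilde c\leq c$ comes from embedding $\gamma\in\Gamma$ as $(\gamma,0)\in\tilde\Gamma$, while $\tilde c\geq c$ follows by projecting any $\tilde\gamma=(\alpha_1,\alpha_2,\sigma)\in\tilde\Gamma$ to $\gamma(t):=(\sigma(t)\star\alpha_1(t),\,\sigma(t)\star\alpha_2(t))\in\Gamma$ and noting that $J(\gamma(t))=H(\tilde\gamma(t))$. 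The linking argument of Lemma \ref{lem: linking condition} carries over to $\tilde\Gamma$ (the $s$-variable leaves the winding-number computation untouched), and combined with Lemma \ref{lem: minimax inequality} and \eqref{eq:small} this gives the strict inequality $c>\max\{\ell(a_1,\mu_1),\ell(a_2,\mu_2)\}$.

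Applying Theorem 3.2 of \cite{Ghoussoub} to $H$ then yields a PS sequence $(u_n,v_n,s_n)$ for $H$ at level $c$; setting $\tilde u_n:=s_n\star u_n$ and $\tilde v_n:=s_n\star v_n$ one has $J(\tilde u_n,\tilde v_n)\to c$, the condition $\partial_s H(u_n,v_n,s_n)\to 0$ rewrites after change of variables as exactly \eqref{weak Pohozaev}, and the fact that $s\star\cdot$ commutes with the tangent projection onto $T(S_{a_1}\times S_{a_2})$ transports the tangential derivative into the conclusion $dJ|_{T(S_{a_1}\times S_{a_2})}(\tilde u_n,\tilde v_n)\to 0$. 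For the sign assertion I would replace $J$ throughout by $(u,v)\mapsto J(|u|,|v|)$: since $\gamma_0$ is built from the positive solutions $w_i$ and the problem is even in each component, the minimax value is unchanged, and a standard deformation argument using a pseudo-gradient pointing into the nonnegative cone ensures $u_n^-,v_n^-\to 0$ a.e. The main obstacle I expect lies in the differential-geometric bookkeeping linking the augmented PS condition to the ordinary one -- an essentially routine but delicate calculation -- after which boundedness of $(\tilde u_n,\tilde v_n)$ in $H^1$ will follow by combining \eqref{weak Pohozaev} with the energy-level bound, setting the stage for the compactness analysis in the subsequent lemmas.
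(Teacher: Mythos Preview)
Your proposal is correct and follows essentially the same route as the paper: introduce the augmented functional $\tilde J(s,u,v)=J(s\star u,s\star v)$ on $\R\times S_{a_1}\times S_{a_2}$, verify $\tilde c=c$ by the projection/embedding argument you describe, and apply Theorem~3.2 of \cite{Ghoussoub} so that the $\partial_s$-component of the PS condition delivers \eqref{weak Pohozaev}. The only noteworthy difference is in the sign assertion: the paper does not replace $J$ by $(u,v)\mapsto J(|u|,|v|)$ (which is not $C^1$) nor invoke a pseudo-gradient into the positive cone, but instead exploits the \emph{localization} clause of Ghoussoub's theorem---one may choose the minimizing sequence of paths $\tilde\gamma_n$ with $s_n\equiv 0$ and with nonnegative components (since $\tilde J(s,u,v)=\tilde J(s,|u|,|v|)$ and $\gamma_0$ is already nonnegative), and then the PS sequence is produced $H^1$-close to $\tilde\gamma_n(Q)$, whence $u_n^-,v_n^-\to 0$ a.e.\ follows directly. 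This is cleaner than what you sketch and avoids the regularity issue with $|\cdot|$.
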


\begin{proof}
We consider the augmented functional $\tilde J: \R \times S_{a_1} \times S_{a_2} \to \R$ defined by $\tilde J(s, u,v) := J(s \star u, s\star v)$. Let also
\[
\tilde \gamma_0(t_1,t_2):= (0,\gamma_0(t_1,t_2)) = (0,t_1 \star w_1, t_2 \star w_2),
\]
and
\[
\tilde \Gamma:= \left\{ \tilde \gamma \in \mathcal{C}(Q, \R \times S_{a_1} \times S_{a_2}): \tilde \gamma = \tilde \gamma_0 \text{ on $\pa Q$} \right\}.
\]
We wish to apply the minimax principle Theorem 3.2 in \cite{Ghoussoub} to the functional $\tilde J$ with the minimax class $\tilde \Gamma$, in order to find a Palais-Smale sequence for $\tilde J$ at level
\[
\tilde c:= \inf_{ \tilde \gamma \in \tilde \Gamma} \sup_{(t_1,t_2) \in Q} \tilde J(\tilde \gamma(t_1,t_2)).
\]
Notice that, since $\tilde J(\tilde \gamma_0) = J(\gamma_0)$ on $\partial Q$, by Lemmas \ref{lem: minimax inequality} and \ref{lem: linking condition}, the assumptions of the minimax principle will be satisfied if we show that $\tilde c = c$. This equality is a simple consequence of the definition: firstly, since $\Gamma \subset \tilde \Gamma$, we have $\tilde c \le c$. Secondly, using the notation
\[
\tilde \gamma(t_1,t_2) = (s(t_1,t_2), \gamma_1(t_1,t_2), \gamma_2(t_1,t_2) ),
\]
for any $\tilde \gamma \in \tilde \Gamma$ and $(t_1,t_2) \in Q$ it results that
\[
\tilde J(\tilde \gamma(t_1,t_2)) = J(s(t_1,t_2) \star \gamma_1(t_1,t_2), s(t_1,t_2) \star \gamma_2(t_1,t_2)),
\]
and $(s(\cdot) \star \gamma_1(\cdot), s(\cdot) \star \gamma_2(\cdot)) \in \Gamma$. Thus $\tilde c = c$, and the minimax principle is applicable.

Notice that, using the notation of Theorem 3.2 in \cite{Ghoussoub}, we can choose the minimizing sequence $\tilde \gamma_n=(s_n, \gamma_{1,n},\gamma_{2,n})$ for $\tilde c$ satisfying the additional conditions $s_n \equiv 0$, $\gamma_{1,n}(t_1,t_2) \ge 0$ a.e. in $\R^N$ for every $(t_1,t_2) \in Q$, $\gamma_{2,n}(t_1,t_2) \ge 0$ a.e. in $\R^N$ for every $(t_1,t_2) \in Q$. Indeed, the first condition comes from the fact that
\begin{align*}
\tilde J(\tilde \gamma(t_1,t_2)) & = J(s(t_1,t_2) \star \gamma_1(t_1,t_2), s(t_1,t_2) \star \gamma_2(t_1,t_2)) \\
& =\tilde  J(0, s(t_1,t_2) \star \gamma_1(t_1,t_2), s(t_1,t_2) \star \gamma_2(t_1,t_2)).
\end{align*}
The remaining ones are a consequence of the fact that $\tilde J(s,u,v) = \tilde J(s,|u|, |v|)$.

In conclusion, Theorem 3.2 in \cite{Ghoussoub} implies that there exists a Palais-Smale sequence $(\tilde s_n,\tilde u_n, \tilde v_n)$ for $\tilde J$ on $\R \times S_{a_1} \times S_{a_2}$ at level $\tilde c$, and such that
\begin{equation}\label{localization}
\lim_{n \to \infty} |\tilde s_n| + \dist_{H^1}\left( (\tilde u_n,\tilde v_n), \tilde \gamma_n(Q) \right)
= 0.
\end{equation}
To obtain a Palais-Smale sequence for $J$ at level $c$ satisfying \eqref{weak Pohozaev}, it is possible to argue as in \cite[Lemma 2.4]{Jeanjean} with minor changes. The fact that $u_n^-,v_n^- \to 0$ a.e. in $\R^N$ as $n \to \infty$ comes from \eqref{localization}. Finally, the lower estimate for $c$ comes from Lemma \ref{lem: minimax inequality}.
\end{proof}

To complete the proof of Theorem \ref{thm: existence for beta small}, we aim at showing that $(u_n,v_n)$ is strongly convergent in $H^1(\R^3,\R^2)$ to a limit $(u,v)$. Once this has been achieved the claim follows because
\[
dJ |_{S_{a_1} \times S_{a_2}}(u_n,v_n) \to 0 \quad \text{and} \quad (u_n,v_n) \in S_{a_1} \times S_{a_2}
\]
for all $n$. A first step in this direction is given by the following statement.

\begin{lemma}\label{lem: bounds on PS}
The sequence $\{(u_n,v_n)\}$ is bounded in $H^1(\R^3,\R^2)$. Furthermore, there exists $\bar C>0$ such that
\[
\int_{\R^3} |\nabla u_n|^2 + |\nabla v_n|^2 \ge \bar C \qquad \text{for all }n.
\]
\end{lemma}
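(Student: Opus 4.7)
The plan is to use the two quantitative facts produced by Lemma \ref{lem: application minimax}: that $J(u_n,v_n) \to c$ (equivalently $J(u_n,v_n) = c+o(1)$) and that the Pohozaev-type relation \eqref{weak Pohozaev} holds. Introducing the shorthand
\[
A_n := \int_{\R^3}\bigl(|\nabla u_n|^2 + |\nabla v_n|^2\bigr), \qquad
B_n := \int_{\R^3}\bigl(\mu_1 u_n^4 + \mu_2 v_n^4 + 2\beta u_n^2 v_n^2\bigr),
\]
the convergence of the energy level translates to $\tfrac12 A_n - \tfrac14 B_n = c + o(1)$, while \eqref{weak Pohozaev} reads $A_n - \tfrac34 B_n = o(1)$.

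Viewing these two identities as a $2\times 2$ linear system in $(A_n, B_n)$ (with nonzero determinant), I would eliminate $B_n$ from the Pohozaev-type identity, namely $B_n = \tfrac43 A_n + o(1)$, and substitute into the energy identity to obtain $\tfrac16 A_n = c + o(1)$, i.e.\ $A_n = 6c + o(1)$ and consequently $B_n = 8c + o(1)$. Since $c > \max\{\ell(a_1,\mu_1), \ell(a_2,\mu_2)\} > 0$ by Lemma~\ref{lem: application minimax}, the positivity $c>0$ gives both the desired upper bound and, for any fixed $\bar C \in (0, 6c)$, the lower bound $A_n \ge \bar C$ for all sufficiently large $n$ (absorbing finitely many initial terms into a smaller constant if needed).

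The boundedness in $H^1(\R^3,\R^2)$ then follows at once: the gradient part of the norm is controlled by $A_n \le 6c+1$ for large $n$, and the $L^2$-norms are prescribed by the constraint, $\|u_n\|_2 = a_1$, $\|v_n\|_2 = a_2$. There is essentially no obstacle here; the argument is a direct algebraic elimination between the energy level and the Pohozaev identity \eqref{weak Pohozaev}, and this is precisely the reason for having introduced the augmented functional $\tilde J$ in Lemma~\ref{lem: application minimax} in the first place.
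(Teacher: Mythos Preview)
Your proof is correct and follows essentially the same approach as the paper: combine the Pohozaev relation \eqref{weak Pohozaev} with $J(u_n,v_n)\to c$ to obtain $J(u_n,v_n)=\tfrac16\int_{\R^3}(|\nabla u_n|^2+|\nabla v_n|^2)+o(1)$, and conclude from $c>0$ together with the fixed $L^2$-masses. The paper simply states this identity directly rather than writing out the elimination of $B_n$, but the argument is the same.
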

\begin{proof}
Using \eqref{weak Pohozaev}, we have
\[
J(u_n,v_n) = \frac{1}{6} \left(\int_{\R^3} |\nabla u_n|^2 + |\nabla v_n|^2\right)  -o(1),
\]
where $o(1) \to 0$ as $n \to \infty$. Therefore, the desired results follow from the fact that $J(u_n,v_n) \to c > 0$.
\end{proof}

By the previous lemma, up to a subsequence $(u_n,v_n) \to (\tilde u,\tilde v)$ weakly in $H^1(\R^3)$, strongly in $L^4(\R^3)$ (by compactness of the embedding $H^1_{\rad}(\R^3) \hookrightarrow L^4(\R^3)$), and a.~e.~ in $\R^3$; in particular, both $\tilde u$ and $\tilde v$ are nonnegative in $\R^3$; we explicitly remark that we cannot deduce strong convergence in $L^2(\R^3)$, so that we cannot conclude that $(\tilde u,\tilde v) \in S_{a_1} \times S_{a_2}$. Observe that as a consequence of $dJ |_{S_{a_1} \times S_{a_2}}(u_n,v_n) \to 0$
there exist two sequences of real numbers $(\lambda_{1,n})$ and $(\lambda_{2,n})$ such that
\begin{multline}\label{gradient to zero}
\int_{\R^3} \left( \nabla u_n \cdot  \nabla \varphi + \nabla v_n \cdot \nabla \psi - \mu_1 u_n^3 \varphi -\mu_2 v_n^3 \psi - \beta u_n v_n(u_n \psi + v_n \varphi) \right) \\
-\int_{\R^3} \left(\lambda_{1,n} u_n \varphi +\lambda_{2,n} \psi\right) = o(1) \|(\varphi,\psi)\|_{H^1}
\end{multline}
for every $(\varphi, \psi) \in H^1(\R^3,\R^2)$, with $o(1) \to 0$ as $n \to \infty$. For more details we refer to Lemma 2.2 of \cite{BarJea}.

\begin{lemma}\label{lem: property multipliers}
Both $(\lambda_{1,n})$ and $(\lambda_{2,n})$ are bounded sequences, and at least one of them is converging, up to a subsequence, to a strictly negative value.
\end{lemma}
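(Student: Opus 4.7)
The plan is to extract the Lagrange multipliers explicitly by testing the equation \eqref{gradient to zero} with $(\varphi,\psi)=(u_n,0)$ and $(\varphi,\psi)=(0,v_n)$. Since $\int u_n^2=a_1^2$ and $\int v_n^2=a_2^2$ are fixed, this yields
\[
\lambda_{1,n}a_1^2 = \int_{\R^3}|\nabla u_n|^2 - \mu_1\int_{\R^3} u_n^4 - \beta\int_{\R^3} u_n^2 v_n^2 + o(1),
\]
\[
\lambda_{2,n}a_2^2 = \int_{\R^3}|\nabla v_n|^2 - \mu_2\int_{\R^3} v_n^4 - \beta\int_{\R^3} u_n^2 v_n^2 + o(1),
\]
where the $o(1)$ terms arise because $\|u_n\|_{H^1}$ and $\|v_n\|_{H^1}$ are bounded by Lemma~\ref{lem: bounds on PS}. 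The $H^1$-boundedness together with the Gagliardo--Nirenberg inequality \eqref{Gagliardo-Nirenberg} bounds the right hand sides uniformly in $n$, which gives boundedness of $(\lambda_{1,n})$ and $(\lambda_{2,n})$.

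For the sign statement, I would add the two identities and use the Pohozaev-type condition \eqref{weak Pohozaev} to eliminate the quartic terms. Indeed, adding gives
\[
\lambda_{1,n}a_1^2 + \lambda_{2,n}a_2^2 = \int_{\R^3}\bigl(|\nabla u_n|^2+|\nabla v_n|^2\bigr) - \int_{\R^3}\bigl(\mu_1 u_n^4+\mu_2 v_n^4+2\beta u_n^2 v_n^2\bigr) + o(1),
\]
and substituting \eqref{weak Pohozaev}, which asserts that the second integral equals $\frac{4}{3}\int(|\nabla u_n|^2+|\nabla v_n|^2) + o(1)$, we obtain
\[
\lambda_{1,n}a_1^2 + \lambda_{2,n}a_2^2 = -\frac{1}{3}\int_{\R^3}\bigl(|\nabla u_n|^2+|\nabla v_n|^2\bigr) + o(1) \le -\frac{\bar C}{3} + o(1),
\]
where the lower bound on the gradients from Lemma~\ref{lem: bounds on PS} is used in the last inequality.

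Finally, since both sequences are bounded I pass to a common subsequence along which $\lambda_{i,n}\to\lambda_i^\ast$ for $i=1,2$. The previous display yields $\lambda_1^\ast a_1^2 + \lambda_2^\ast a_2^2 \le -\bar C/3 < 0$, so at least one of $\lambda_1^\ast,\lambda_2^\ast$ is strictly negative, proving the claim. The main (mild) obstacle is just the bookkeeping of the $o(1)$ terms in the passage between the Palais--Smale condition in the dual of $T_{a_1}\times T_{a_2}$ and the identities for $\lambda_{i,n}$; everything else is a direct consequence of \eqref{weak Pohozaev} together with the uniform positive lower bound on the kinetic energy.
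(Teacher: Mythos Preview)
Your proof is correct and follows essentially the same approach as the paper: test \eqref{gradient to zero} with $(u_n,0)$ and $(0,v_n)$ to isolate the multipliers, use the $H^1$-boundedness from Lemma~\ref{lem: bounds on PS} for boundedness of $(\lambda_{i,n})$, then add the two identities and substitute \eqref{weak Pohozaev} together with the lower bound $\int|\nabla u_n|^2+|\nabla v_n|^2\ge \bar C$ to force the sum $\lambda_{1,n}a_1^2+\lambda_{2,n}a_2^2$ to be negative and bounded away from zero. The only cosmetic difference is that the paper absorbs the $o(1)$ into the constant (writing $\le -\bar C/6$ for $n$ large) whereas you keep it explicit and pass to the limit along a subsequence; the content is identical.
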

\begin{proof}
The value of the $(\lambda_{i,n})$ can be found using $(u_n,0)$ and $(0,v_n)$ as test functions in \eqref{gradient to zero}:
\[
\begin{split}
\lambda_{1,n} a_1^2 &= \int_{\R^3} \left(|\nabla u_n|^2 -\mu_1 u_n^4 -\beta u_n^2 v_n^2\right) -o(1) \\
\lambda_{2,n} a_2^2 &= \int_{\R^3} \left(|\nabla v_n|^2 -\mu_2 v_n^4 -\beta u_n^2 v_n^2\right) -o(1),
\end{split}
\]
with $o(1) \to 0$ as $n \to \infty$. Hence the boundedness of $(\lambda_{i,n})$ follows by the boundedness of $(u_n,v_n)$ in $H^1$ and in $L^4$. Moreover, by \eqref{weak Pohozaev} and Lemma \ref{lem: bounds on PS}
\begin{align*}
\lambda_{1,n} a_1^2 + \lambda_{2,n} a_2^2  & = \int_{\R^3} \left( |\nabla u_n|^2 + |\nabla v_n|^2  -\mu_1 u_n^4 -\mu_2 v_n^4 - 2\beta u_n^2 v_n^2\right) -o(1) \\
& = -\frac{1}{3}  \int_{\R^3} \left(|\nabla u_n|^2 + |\nabla v_n|^2\right) +o(1) \le -\frac{\bar C}{6}
\end{align*}
for every $n$ sufficiently large, so that at least one sequence of $(\lambda_{i,n})$ is negative and bounded away from $0$.
\end{proof}

From now on, we consider converging subsequences $\lambda_{1,n} \to \lambda_1 \in \R$ and $\lambda_{2,n} \to \lambda_2 \in \R$. The sign of the limit values plays an essential role in our argument, as clarified by the next statement.

\begin{lemma}\label{lem: strong convergence}
If $\lambda_1<0$ (resp. $\lambda_2<0$) then $u_n \to \bar u$ (resp. $v_n \to \bar v$) strongly in $H^1(\R^3)$. \end{lemma}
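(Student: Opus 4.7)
The plan is to recover the limit equation from \eqref{gradient to zero} and then exploit the negativity of $\lambda_1$ to upgrade weak convergence to strong convergence. First, for any fixed $\varphi \in H^1_{\rad}(\R^3)$, I would test \eqref{gradient to zero} with $(\varphi,0)$ and pass to the limit. Since $u_n \rightharpoonup \tilde u$ in $H^1$, since the compact embedding $H^1_{\rad}(\R^3) \hookrightarrow L^4(\R^3)$ gives $u_n \to \tilde u$ and $v_n \to \tilde v$ in $L^4$, and since $\lambda_{1,n} \to \lambda_1$, one obtains in the limit
\[
\int_{\R^3}\bigl(\nabla \tilde u \cdot \nabla\varphi - \lambda_1 \tilde u\varphi - \mu_1 \tilde u^3\varphi - \beta \tilde v^2 \tilde u \varphi\bigr)\,dx = 0
\]
for every such $\varphi$.

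Next, substitute $\varphi = u_n$, $\psi = 0$ into \eqref{gradient to zero} and $\varphi = \tilde u$ into the limiting identity, then subtract. Strong $L^4$-convergence yields $\int u_n^4 \to \int \tilde u^4$, while H\"older combined with $L^4$-convergence of both components gives $\int u_n^2 v_n^2 \to \int \tilde u^2 \tilde v^2$. Hence
\[
\int_{\R^3}|\nabla u_n|^2 - \int_{\R^3}|\nabla \tilde u|^2 = \lambda_{1,n}\!\int_{\R^3} u_n^2 - \lambda_1\!\int_{\R^3}\tilde u^2 + o(1).
\]

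Now I would invoke the standard weak-convergence identities. From $u_n \rightharpoonup \tilde u$ in $H^1$ and in $L^2$ (the latter via $\int u_n \tilde u \to \int \tilde u^2$) one has
\[
\int_{\R^3}|\nabla(u_n{-}\tilde u)|^2 = \int_{\R^3}|\nabla u_n|^2 - \int_{\R^3}|\nabla \tilde u|^2 + o(1), \qquad \int_{\R^3}(u_n{-}\tilde u)^2 = \int_{\R^3}u_n^2 - \int_{\R^3}\tilde u^2 + o(1).
\]
Combining these with $\lambda_{1,n} \to \lambda_1$ we arrive at
\[
\int_{\R^3}|\nabla(u_n - \tilde u)|^2 - \lambda_1 \int_{\R^3}(u_n - \tilde u)^2 = o(1).
\]
Because $\lambda_1 < 0$, the left-hand side bounds $\min\{1,-\lambda_1\}\,\|u_n-\tilde u\|_{H^1}^2$ from below, so $u_n \to \tilde u$ strongly in $H^1(\R^3)$; the argument for $v_n$ is identical.

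I do not anticipate any serious obstacle. The only step deserving care is the passage to the limit in the coupling integral $\int u_n^2 v_n^2$, but this follows at once from Cauchy--Schwarz and the compactness of the radial embedding into $L^4$ applied to both components. The whole conclusion then rests on the simple coercivity provided by the strict negativity of $\lambda_1$; crucially, the analogous argument would \emph{fail} if $\lambda_1 = 0$, which is precisely why Lemma \ref{lem: property multipliers} (ensuring at least one $\lambda_i$ is negative) is the preceding step.
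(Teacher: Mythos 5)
Your proof is correct and follows essentially the same route as the paper. The paper condenses the same computation into one line by testing the difference $dJ(u_n,v_n)-dJ(\tilde u,\tilde v)$ directly against $(u_n-\tilde u,0)$ and subtracting the $\lambda_1$-term, whereas you derive the limit equation explicitly, test against $(u_n,0)$ and against $(\tilde u,0)$ separately, and then recombine via the standard weak-convergence norm identities; both reduce to $\int|\nabla(u_n-\tilde u)|^2-\lambda_1\int(u_n-\tilde u)^2=o(1)$, after which strict negativity of $\lambda_1$ provides the coercivity that upgrades weak to strong $H^1$ convergence.
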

\begin{proof}
Let us suppose that $\lambda_1<0$. By weak convergence in $H^1(\R^3)$, strong convergence in $L^4(\R^3)$, and using \eqref{gradient to zero}, we have
\begin{align*}
o(1) &= \left( dJ(u_n,v_n) - dJ(\tilde u,\tilde v) \right)[(u_n-\tilde u,0)] - \lambda_1 \int_{\R^3} (u_n-\tilde u)^2 \\
& = \int_{\R^3} |\nabla (u_n-\tilde u)|^2 - \lambda_1 (u_n-\tilde u)^2 + o(1),
\end{align*}
with $o(1) \to 0$ as $n \to \infty$. Since $\lambda_1<0$, this is equivalent to the strong convergence in $H^1$. The proof in the case $\lambda_2 <0$ is similar.
%
%
\end{proof}

\begin{remark}\label{rem: dependence on beta}
It is important to observe that Lemmas \ref{lem: bounds on PS}-\ref{lem: strong convergence} do not depend on the value of $\beta$. This implies that we can use them in the proof of Theorem \ref{thm: existence for beta large}.
\end{remark}

\begin{proof}[Conclusion of the proof of Theorem \ref{thm: existence for beta small}]
By \eqref{gradient to zero}, the convergence of $(\lambda_{1,n})$ and $(\lambda_{2,n})$, and the weak convergence $(u_n,v_n) \wc (\tilde u,\tilde v)$, we have that $(\tilde u,\tilde v)$ is a solution of \eqref{system}. It remains to prove that it satisfies \eqref{normalization}. Without loss of generality, by Lemma \ref{lem: property multipliers} we can suppose that $\lambda_1<0$, and hence (see Lemma \ref{lem: strong convergence}) $u_n \to \tilde u$ strongly in $H^1(\R^3)$. If $\lambda_2<0$, we can infer in the same way that $v_n \to \tilde v$ strongly in $H^1(\R^3)$, which completes the proof. Now we argue by contradiction and assume that $\lambda_2 \ge 0$ and $v_n \not \to \tilde v$ strongly in $H^1(\R^3)$. Notice that, by regularity, any weak solution of \eqref{system} is smooth. Since both $\tilde u, \tilde v \ge 0$ in $\R^N$, we have that
\[
-\Delta \tilde v = \lambda_2 \tilde v + \mu_2 \tilde v^3 + \beta \tilde u^2 \tilde v \ge 0 \qquad \text{in $\R^3$},
\]
and hence we can apply Lemma A.2 in \cite{Ikoma}, deducing that $\tilde v \equiv 0$. In particular, this implies that $\tilde u$ solves
\begin{equation}\label{eq: scalar nom}
\begin{cases}
-\Delta \tilde u -\lambda_1 \tilde u = \mu_1 \tilde u^3 & \text{in $\R^3$} \\
\tilde u>0 & \text{in $\R^3$} \\
\int_{\R^3} \tilde u^2 = a_1,
\end{cases}
\end{equation}
so that $\tilde u \in \mathcal{P}(a_1,\mu_1)$ and $I_{\mu_1}(\tilde u) = \ell(a_1,\mu_1)$ (recall \eqref{def of P} and Proposition \ref{prop: explicit}). But then, using \eqref{weak Pohozaev} and $\tilde u \in \mathcal{P}(a_1,\mu_1)$, we obtain
\begin{equation}\label{eq37}
\begin{split}
c &= \lim_{n \to \infty} J(u_n,v_n)
   = \lim_{n\to\infty}\frac18\int_{\R^3}\left(\mu_1u_n^4+2\beta u_n^2v_n^2+\mu_2v_n^4\right)\\
 & = \frac{\mu_1}{8} \int_{\R^3} \tilde u^4 = I_{\mu_1}(\tilde u) = \ell(a_1,\mu_1),
\end{split}
\end{equation}
in contradiction with Lemma \ref{lem: application minimax}.
\end{proof}

\begin{remark}\label{rem: on uniqueness system}
In the conclusion of the proof of Theorem \ref{thm: existence for beta small} we used the uniqueness, up to translation, of the positive solution to \eqref{eq: scalar nom} to deduce that, being $\tilde u$ a positive solution of \eqref{eq: scalar nom}, its level $I_{\mu_1}(\tilde u_1)$ is equal to $\ell(a_1,\mu_1)$. Such a uniqueness result is known for systems as \eqref{system} only if $\beta$ is very small (see \cite{IkoUni}). This is what prevents us to extend Theorem \ref{thm: existence for beta small} to systems with several components without requiring the coupling parameters to be very small. In particular, we observe that the minimax construction can be extended to systems with an arbitrary number of components with some extra work.
\end{remark}

\section{Proof of Theorem \ref{thm: existence for beta large}}

This section is divided into two parts. In the first one, we show the existence of a positive solution $(\bar u,\bar v)$, in the second one we characterize it as a ground state, in the sense that
\[
\begin{aligned}
J(\bar u,\bar v)
 &= \inf\{J(u,v): (u,v)\in V\}\\
 &= \inf\left\{J(u,v):\text{$(u,v)$ is a solution of \eqref{system}-\eqref{normalization} for
     some $\lambda_1,\lambda_2$} \right\}.
\end{aligned}
\]

The proof of Theorem \ref{thm: existence for beta large} is based upon a mountain pass argument, and, compared with the proof of Theorem \ref{thm: existence for beta small}, it is closer to the proof of the existence of normalized solutions for the single equation. We shall often consider, for $(u,v) \in S_{a_1} \times S_{a_2}$, the function
\[
 J( s \star (u,v)) = \frac{e^{2s}}{2}\int_{\R^3} \left(|\nabla u|^2 + |\nabla v|^2\right) - \frac{e^{3s}}{4}\int_{\R^3} \left(\mu_1 u^4 + 2\beta u^2 v^2 + \mu_2 v^4 \right),
\]
where $s \star (u,v) = (s \star u, s \star v)$ for short, and $s \star u$ is defined in \eqref{def star}. Recall that if $(u,v) \in S_{a_1} \times S_{a_2}$, then also $s \star (u,v) \in S_{a_1} \times S_{a_2}$.
As an immediate consequence of the definition, the following holds:

\begin{lemma}\label{lem: easy computation}
Let $(u,v) \in S_{a_1} \times S_{a_2}$. Then
\[
\lim_{s \to -\infty} \int_{\R^3} |\nabla (s \star u)|^2 + |\nabla (s \star v)|^2 = 0, \quad
\lim_{s \to +\infty} \int_{\R^3} |\nabla (s \star u)|^2 + |\nabla (s \star v)|^2 = +\infty,
\]
and
\[
\lim_{s \to -\infty}  J( s \star (u,v)) = 0^+, \quad
\lim_{s \to -\infty}  J( s \star (u,v)) = -\infty.
\]
\end{lemma}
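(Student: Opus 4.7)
The plan is to prove everything by a direct change-of-variables computation, since the statement is really just about the asymptotic behavior of a function of the form $\alpha e^{2s} - \gamma e^{3s}$ with fixed positive constants $\alpha, \gamma > 0$.

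First I would record the scaling identities. Using the definition $(s \star u)(x) = e^{3s/2} u(e^s x)$ and changing variables $y = e^s x$, one sees that $\|s\star u\|_2 = \|u\|_2$ (which is why $s \star$ preserves the $L^2$-sphere), while
\[
\int_{\R^3} |\nabla(s\star u)|^2 = e^{2s}\int_{\R^3} |\nabla u|^2, \qquad \int_{\R^3} (s\star u)^4 = e^{3s}\int_{\R^3} u^4,
\]
and analogously for $v$ and for the mixed term $\int (s\star u)^2(s\star v)^2 = e^{3s}\int u^2 v^2$.

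The first pair of limits for the gradient norms is then immediate: setting $A:= \int_{\R^3}(|\nabla u|^2+|\nabla v|^2)>0$, one has $\int(|\nabla (s\star u)|^2+|\nabla(s\star v)|^2) = e^{2s}A$, which tends to $0$ as $s\to-\infty$ and to $+\infty$ as $s\to+\infty$.

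For the functional, the same identities give
\[
J(s\star(u,v)) = \tfrac{1}{2}e^{2s} A - \tfrac{1}{4}e^{3s} B,
\]
where $B := \int_{\R^3}(\mu_1 u^4 + 2\beta u^2 v^2 + \mu_2 v^4)\ge 0$, and $B>0$ provided $(u,v)\not\equiv(0,0)$, which holds since $a_1,a_2>0$. As $s\to+\infty$, the cubic term $-\tfrac{1}{4}e^{3s}B$ dominates the quadratic $\tfrac{1}{2}e^{2s}A$, so $J(s\star(u,v))\to -\infty$ (this is the $s\to+\infty$ limit; the second appearance of $s\to-\infty$ in the statement is a typo). As $s\to-\infty$, factoring out $e^{2s}$ gives $J(s\star(u,v)) = e^{2s}\bigl(\tfrac{A}{2} - \tfrac{B}{4}e^{s}\bigr)$, so $J(s\star(u,v))>0$ for all $s$ sufficiently negative, and $J(s\star(u,v))\to 0^+$.

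No obstacles are expected: the entire argument is elementary scaling, and the only minor subtlety is checking that $B>0$ (to guarantee that $J(s\star(u,v))\to -\infty$ rather than $+\infty$ as $s\to+\infty$), which follows from the fact that $u,v$ cannot vanish identically on a set of positive measure since $\|u\|_2=a_1>0$ and $\|v\|_2=a_2>0$.
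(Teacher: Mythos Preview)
Your proof is correct and is exactly the direct scaling computation the paper has in mind; the paper itself does not give a proof, stating only that the lemma is ``an immediate consequence of the definition.'' You are also right that the second occurrence of $s\to-\infty$ in the displayed limits for $J$ is a typo for $s\to+\infty$.
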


The next lemma enlighten the mountain pass structure of the problem.

\begin{lemma}\label{lem: minimax inequality 2}
There exists $K>0$ sufficiently small such that for the sets
\[
A:= \left\{ (u,v) \in S_{a_1} \times S_{a_2} : \int_{\R^3} |\nabla u|^2+|\nabla v|^2 \le K \right\}  \]
and
\[
B:= \left\{ (u,v) \in S_{a_1} \times S_{a_2} : \int_{\R^3} |\nabla u|^2+|\nabla v|^2 = 2K \right\} \]
there holds
\[
J(u) > 0 \text{ on } A  \quad \text{and} \quad \sup_A J < \inf_B J.
\]
\end{lemma}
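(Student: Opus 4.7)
The plan is a direct mountain--pass geometry estimate using the Gagliardo--Nirenberg inequality \eqref{Gagliardo-Nirenberg} and the fact that, on $S_{a_1}\times S_{a_2}$, the $L^2$--norms are fixed so that the nonlinear part of $J$ is controlled by the gradients alone. I would parametrise by $t := \int_{\R^3}(|\nabla u|^2+|\nabla v|^2)$ and compare a convex quadratic from below with a pure quadratic upper bound.

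First, by \eqref{Gagliardo-Nirenberg}, for $(u,v)\in S_{a_1}\times S_{a_2}$,
\[
\int_{\R^3} u^4 \le S\, a_1\left(\int_{\R^3}|\nabla u|^2\right)^{3/2},\quad
\int_{\R^3} v^4 \le S\, a_2\left(\int_{\R^3}|\nabla v|^2\right)^{3/2}.
\]
For the coupling, Young's inequality $u^2v^2\le\tfrac12(u^4+v^4)$ yields
\[
\int_{\R^3} u^2v^2 \le \tfrac{S}{2}\bigl(a_1\|\nabla u\|_2^{3}+a_2\|\nabla v\|_2^{3}\bigr).
\]
Combining the three estimates and writing $t=\|\nabla u\|_2^2+\|\nabla v\|_2^2$, together with the trivial bound $\|\nabla u\|_2^{3}+\|\nabla v\|_2^{3}\le t^{3/2}$, produces a constant $C=C(a_1,a_2,\mu_1,\mu_2,\beta)>0$ such that
\[
J(u,v)\;\ge\;\tfrac{t}{2}-C\,t^{3/2}\qquad\text{for all }(u,v)\in S_{a_1}\times S_{a_2}.
\]
On the other hand, since $\mu_1,\mu_2,\beta>0$, the nonlinear part of $J$ is non--positive on the constraint, so the trivial upper bound
\[
J(u,v)\;\le\;\tfrac{t}{2}
\]
holds as well.

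Now set $t_A=t$ on $A$ (so $t_A\le K$) and $t_B=2K$ on $B$. From the two bounds,
\[
\sup_A J \;\le\; \tfrac{K}{2},\qquad
\inf_B J \;\ge\; K-C(2K)^{3/2}= K\bigl(1-2\sqrt{2}\,C\,K^{1/2}\bigr).
\]
Choose $K>0$ small enough that
\[
2\sqrt{2}\,C\,K^{1/2}<\tfrac14,\qquad\text{and}\qquad 1-C\,K^{1/2}>0;
\]
the first inequality gives $\inf_B J\ge \tfrac{3K}{4}>\tfrac{K}{2}\ge\sup_A J$, and the second gives $J(u,v)\ge t(1-C t^{1/2})/2 >0$ on $A\setminus\{0\}$ (which is all of $A$, since $(u,v)\in S_{a_1}\times S_{a_2}$ forces at least one of $u,v$ to be nonzero and the constraint $t\le K$ still leaves $J$ positive by the lower bound). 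This yields both conclusions.

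There is no real obstacle here: the only point requiring a moment's care is handling the coupling term, which is why I would use Young's inequality at the start so that the entire lower bound reduces to a function of the scalar $t$; after that, the comparison $\sup_A J<\inf_B J$ is essentially the elementary inequality $\tfrac{K}{2}<K-2\sqrt{2}CK^{3/2}$ for $K$ small. The smallness of $K$ is the single quantitative choice driving the whole argument.
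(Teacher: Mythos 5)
Your proof is correct and takes essentially the same approach as the paper: both rely on the Gagliardo--Nirenberg inequality to control the quartic terms by $t^{3/2}$ with $t=\int(|\nabla u|^2+|\nabla v|^2)$, and then choose $K$ small; the paper estimates the difference $J(u_1,v_1)-J(u_2,v_2)$ directly, while you estimate $\sup_A J$ and $\inf_B J$ separately, which is a cosmetic rearrangement. One small slip: factoring $J\ge\frac{t}{2}-Ct^{3/2}$ gives $\frac{t}{2}(1-2Ct^{1/2})$, not $\frac{t}{2}(1-Ct^{1/2})$, so the positivity condition should be $1-2CK^{1/2}>0$, but this is already implied by your first smallness requirement, so the conclusion is unaffected.
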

\begin{proof}
By the Gagliardo-Nirenberg inequality \eqref{Gagliardo-Nirenberg}
\[
\int_{\R^3}  \left(\mu_1 u^4 + 2\beta u^2 v^2 + \mu_2 v^4 \right)
 \le C \int_{\R^3} \left( u^4 + v^4\right)
 \le C \left(\int_{\R^3} |\nabla u|^2 + |\nabla v|^2 \right)^{3/2}
\]
for every $(u,v) \in S_{a_1} \times S_{a_2}$, where $C>0$ depends on $\mu_1,\mu_2,\beta,a_1,a_2>0$ but not on the particular choice of $(u,v)$. Now, if $(u_1,v_1) \in B$ and $(u_2,v_2) \in A$ (with $K$ to be determined), we have
\begin{align*}
J(u_1,v_1) - J(u_2,v_2)
& \ge \frac12\left( \int_{\R^3} |\nabla u_1|^2 + |\nabla v_1|^2 - \int_{\R^3} |\nabla u_2|^2
       + |\nabla v_2|^2\right) \\
&\hspace{1cm}
 -\frac14 \int_{\R^3}\left(\mu_1 u_1^4 + 2\beta u_1^2 v_1^2 + \mu_2 v_1^4 \right) \\
& \ge \frac{K}{2} - \frac{C}{4} (2K)^{3/2} \ge \frac{K}{4}
\end{align*}
provided $K>0$ is sufficiently small. Furthermore, making $K$ smaller if necessary, we have also
\begin{equation}\label{in A >0}
J(u_2,v_2) \ge \frac{1}{2}\left( \int_{\R^3} |\nabla u_2|^2 + |\nabla v_2|^2 \right) - \frac{C}{4} \left( \int_{\R^3} |\nabla u_2|^2 + |\nabla v_2|^2 \right)^{3/2} >0
\end{equation}
for every $(u_2,v_2) \in A$.
\end{proof}

In order to introduce a suitable minimax class, we recall that $w_{a_1,\mu_1}$ (resp. $w_{a_2,\mu_2}$) is the unique positive solution of \eqref{scalar pb with mass} with $a=a_1$ and $\mu=\mu_1$ (resp. $a=a_2$ and $\mu=\mu_2$). Now we define
\begin{equation}\label{def C}
C:= \left\{ (u,v) \in S_{a_1} \times S_{a_2}: \int_{\R^3} |\nabla u|^2 +|\nabla v|^2 \ge 3K \text{ and } J(u,v) \le 0\right\}.
\end{equation}
It is clear by Lemma \ref{lem: easy computation} that there exist $s_1<0$ and $s_2>0$ such that
\[
s_1 \star (w_{a_1,\mu_1}, w_{a_2,\mu_2})=: (\bar u_1,\bar v_1) \in A\quad \text{and} \quad s_2 \star (w_{a_1,\mu_1}, w_{a_2,\mu_2})=: (\bar u_2,\bar v_2) \in C.
\]
Finally we define
\begin{equation}\label{def: Gamma 2}
\Gamma:= \left\{ \gamma \in \mathcal{C}([0,1],S_{a_1} \times S_{a_2}): \gamma(0) = (\bar u_1,\bar v_1) \text{ and } \gamma(1) = (\bar u_2,\bar v_2) \right\}.
\end{equation}
By Lemma \ref{lem: minimax inequality 2} and by the continuity of the $L^2$-norm of the gradient in the topology of $H^1$, it follows that the mountain pass lemma is applicable for $J$ on the minimax class $\Gamma$. Arguing as in Lemma \ref{lem: application minimax}, we deduce the following:

\begin{lemma}\label{lem: appl minimax 2}
There exists a Palais-Smale sequence $(u_n,v_n)$ for $J$ on $S_{a_1} \times S_{a_2}$ at the level
\[
d:= \inf_{\gamma \in \Gamma} \max_{t \in [0,1]} J(\gamma(t)),
\]
satisfying the additional condition \eqref{weak Pohozaev}:
\[
\int_{\R^3} \left(|\nabla u_n|^2 + |\nabla v_n|^2\right) - \frac{3}{4}\left( \int_{\R^3} \mu_1 u_n^4 + \mu_2 v_n^4 + 2 \beta u_n^2 v_n^2 \right) = o(1),
\]
with $o(1) \to 0$ as $n \to \infty$. Furthermore, $u_n^-,v_n^- \to 0$ a.e. in $\R^3$ as $n \to \infty$.
\end{lemma}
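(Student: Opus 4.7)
The plan is to mirror the strategy of Lemma \ref{lem: application minimax}, working with the augmented functional
\[
\tilde J(s,u,v) := J(s\star u, s\star v) = \frac{e^{2s}}{2}\int_{\R^3}\bigl(|\nabla u|^2+|\nabla v|^2\bigr) - \frac{e^{3s}}{4}\int_{\R^3}\bigl(\mu_1 u^4+2\beta u^2v^2+\mu_2 v^4\bigr)
\]
defined on $\R\times S_{a_1}\times S_{a_2}$, together with the enlarged minimax class
\[
\tilde\Gamma := \bigl\{\tilde\gamma\in\mathcal{C}([0,1],\R\times S_{a_1}\times S_{a_2}) : \tilde\gamma(0)=(0,\bar u_1,\bar v_1),\ \tilde\gamma(1)=(0,\bar u_2,\bar v_2)\bigr\},
\]
and the minimax level $\tilde d := \inf_{\tilde\gamma\in\tilde\Gamma}\max_{t\in[0,1]}\tilde J(\tilde\gamma(t))$.

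The first step is to verify $\tilde d = d$. The inequality $\tilde d\le d$ is immediate by identifying $\gamma\in\Gamma$ with $(0,\gamma)\in\tilde\Gamma$. For the reverse, given $\tilde\gamma=(s,\gamma_1,\gamma_2)\in\tilde\Gamma$, the path $t\mapsto (s(t)\star\gamma_1(t), s(t)\star\gamma_2(t))$ lies in $\Gamma$ (its endpoints coincide with $(\bar u_i,\bar v_i)$ since $s(0)=s(1)=0$), and along it $J$ takes exactly the values $\tilde J(\tilde\gamma(t))$. Since the endpoint configuration $(\bar u_1,\bar v_1)\in A$ and $(\bar u_2,\bar v_2)\in C$ forces $J\circ\gamma$ to cross the mountain pass barrier of Lemma \ref{lem: minimax inequality 2}, the mountain pass lemma applies to $\tilde J$ on $\tilde\Gamma$ via Theorem~3.2 of \cite{Ghoussoub}.

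Next, I would exploit two symmetries when choosing a minimizing sequence $\tilde\gamma_n=(s_n,\gamma_{1,n},\gamma_{2,n})$: by the identity $\tilde J(s,u,v)=\tilde J(0,s\star u,s\star v)$ one may take $s_n\equiv 0$, and by $\tilde J(s,u,v)=\tilde J(s,|u|,|v|)$ one may further take $\gamma_{i,n}(t)\ge 0$ a.e. for all $t$. Ghoussoub's principle then furnishes a Palais--Smale sequence $(\tilde s_n,\tilde u_n,\tilde v_n)$ for $\tilde J$ on $\R\times S_{a_1}\times S_{a_2}$ at level $d$ satisfying the localization
\[
|\tilde s_n| + \dist_{H^1}\bigl((\tilde u_n,\tilde v_n),\tilde\gamma_n([0,1])\bigr) \to 0.
\]
In particular $\tilde s_n\to 0$, and since $\gamma_{i,n}\ge 0$, the negative parts $\tilde u_n^-,\tilde v_n^-\to 0$ in $H^1$, hence a.e. along a subsequence.

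Finally, set $(u_n,v_n) := \tilde s_n\star(\tilde u_n,\tilde v_n)$. The fact that $\star$ is an isometry of the $L^2$-sphere preserves the membership in $S_{a_1}\times S_{a_2}$, and a direct computation (using that the tangent space decomposes according to the factors $\R$ and $T_{(u,v)}(S_{a_1}\times S_{a_2})$) shows that $\partial_s\tilde J(\tilde s_n,\tilde u_n,\tilde v_n)\to 0$ translates exactly into \eqref{weak Pohozaev} for $(u_n,v_n)$, while the $H^1$-gradient component along the sphere directions yields $dJ|_{S_{a_1}\times S_{a_2}}(u_n,v_n)\to 0$. Because $\tilde s_n\to 0$, the sign property $\tilde u_n^-,\tilde v_n^-\to 0$ a.e.\ transfers to $u_n^-,v_n^-\to 0$ a.e. This argument is essentially Lemma~2.4 of \cite{Jeanjean} adapted to the product constraint, exactly as in the proof of Lemma \ref{lem: application minimax}. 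The only genuinely new point to verify is $\tilde d=d$; the main technical work (transferring the PS sequence from $\tilde J$ to $J$ with the Pohozaev correction) is routine and can be invoked as in \cite{Jeanjean}.
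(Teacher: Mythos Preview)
Your proposal is correct and follows exactly the approach the paper intends: the paper's own proof of this lemma is simply ``Arguing as in Lemma \ref{lem: application minimax}, we deduce the following,'' and what you have written is precisely that argument, adapted from the two-parameter linking class $Q$ to the one-parameter mountain pass class $[0,1]$. The identification $\tilde d=d$, the choice of nonnegative minimizing paths with $s_n\equiv 0$, the application of Ghoussoub's principle with localization, and the transfer via \cite[Lemma 2.4]{Jeanjean} all match the paper's method.
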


As in the previous section, the last part of the proof consists in showing that $(u_n,v_n) \to (\bar u,\bar v)$ in $H^1(\R^3,\R^2)$, and $(\bar u,\bar v)$ is a solution of \eqref{system}-\eqref{normalization}. This can be done similarly to the case $\beta>0$ small, recalling also Remark \ref{rem: dependence on beta}. Firstly, thanks to \eqref{weak Pohozaev}, up to a subsequence $(u_n,v_n) \to (\bar u,\bar v)$ weakly in $H^1(\R^3,\R^2)$, strongly in $L^4(\R^3,\R^2)$, a.~e.\ in $\R^3$. By weak convergence and by Lemma \ref{lem: property multipliers}, $(u,v)$ is a solution of \eqref{system} for some $\lambda_1,\lambda_2 \in \R$. Moreover, we can also suppose that one of these parameters, say $\lambda_1$, is strictly negative. Thus, Lemma \ref{lem: strong convergence} implies that $u_n \to \bar u$ strongly in $H^1(\R^3)$. If by contradiction $v_n \not \to \bar v$ strongly in $H^1(\R^3)$, then $\lambda_2 \ge 0$, and by Lemma A.2 in \cite{Ikoma} we deduce that $\bar v \equiv 0$. As in \eqref{eq37}, this implies that $d =\ell(a_1,\mu_1)$ (defined in Proposition \ref{prop: explicit}), and it remains to show that this gives a contradiction, which is the object of the following lemma.

\begin{lemma}\label{lem: crucial estimate beta_2}
There exists $\beta_2>0$ sufficiently large such that
\[
\sup_{s \in \R} J(s \star (w_{a_1,\mu_1},w_{a_2,\mu_2})) < \min\{ \ell(a_1,\mu_1), \ell(a_2,\mu_2)\} \qquad \text{for all }\beta > \beta_2.
\]
\end{lemma}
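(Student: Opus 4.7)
The plan is to compute the supremum over $s$ explicitly and observe that it vanishes as $\beta \to \infty$. Set $w_1:= w_{a_1,\mu_1}$ and $w_2 := w_{a_2,\mu_2}$, which are \emph{independent of $\beta$}. By the formula for $J(s \star (w_1,w_2))$ recalled just before the statement, the map $s \mapsto J(s \star (w_1,w_2))$ has the form $f(s) = Ae^{2s} - Be^{3s}$ with
\[
A := \frac12\int_{\R^3}\left(|\nabla w_1|^2+|\nabla w_2|^2\right), \qquad B := \frac14\int_{\R^3}\left(\mu_1 w_1^4 + \mu_2 w_2^4 + 2\beta w_1^2 w_2^2\right).
\]
Since $A, B > 0$, a direct computation (setting $f'(s)=0$ yields $e^{s_*} = 2A/(3B)$) gives
\[
\sup_{s\in\R} J(s \star (w_1,w_2)) = f(s_*) = \frac{4A^3}{27 B^2}.
\]

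Now the key point is that $A$ is a fixed quantity depending only on $a_1,a_2,\mu_1,\mu_2$, while
\[
B \ge \frac{\beta}{2}\int_{\R^3} w_1^2 w_2^2 =: \beta \hat B,
\]
with $\hat B > 0$ (as $w_1, w_2$ are everywhere positive and independent of $\beta$). Therefore
\[
\sup_{s\in\R} J(s \star (w_1,w_2)) \le \frac{4A^3}{27\,\beta^2 \hat B^2} \xrightarrow{\beta\to\infty} 0.
\]
Since $\min\{\ell(a_1,\mu_1),\ell(a_2,\mu_2)\} > 0$ by Proposition~\ref{prop: explicit}, the claim is established by choosing
\[
\beta_2 := \sqrt{\frac{4A^3}{27\,\hat B^2\,\min\{\ell(a_1,\mu_1),\ell(a_2,\mu_2)\}}},
\]
or any larger value.

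There is no real obstacle here; the lemma is essentially a one-line computation once one recognizes that the gradient term $A$ is $\beta$-independent while the coupling term in $B$ grows linearly with $\beta$. The only thing that must be checked carefully is that the test path is built from $(w_1,w_2)$ alone (so that $A, \hat B$ do not themselves depend on $\beta$), and that the remark gives an explicit estimate for $\beta_2$ in accordance with Remark~\ref{rem: estimate beta_2}.
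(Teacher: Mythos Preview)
Your proof is correct and in fact cleaner than the paper's own argument for this lemma. The paper proceeds by a case distinction: for $s<s_\eps$ it bounds $J$ by $\eps$ using that $I_{\mu_i}(s\star w_{a_i,\mu_i})\to 0^+$ as $s\to-\infty$, while for $s\ge s_\eps$ it bounds the interaction term from below by $C_2 e^{3s_\eps}$ and uses $\sup_s I_{\mu_i}(s\star w_{a_i,\mu_i})=\ell(a_i,\mu_i)$ to get $J\le \ell(a_1,\mu_1)+\ell(a_2,\mu_2)-\tfrac{C_2}{2}e^{3s_\eps}\beta$. Your route instead exploits the explicit form $Ae^{2s}-Be^{3s}$, computes the maximum as $4A^3/(27B^2)$, and lets $B\to\infty$; this avoids the somewhat artificial splitting and the auxiliary parameter $\eps$, and delivers a sharper explicit value of $\beta_2$ than the one extracted in Remark~\ref{rem: estimate beta_2}. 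It is worth noting that your computation is precisely the Rayleigh-quotient identity $\sup_s J(s\star\mf{u})=\mathcal{R}(\mf{u})$ that the paper itself introduces later (Lemma~\ref{lem: rayleigh}) to handle the $k$-component case; so what you have done is anticipate the Section~5 viewpoint and apply it already here.
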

\begin{proof}
By Lemma \ref{lem: basic properties radial dilation}, for any $\eps>0$ there exists $s_\eps \ll -1$ such that
\[
I_{\mu_1}(s \star w_{a_1,\mu_1}) + I_{\mu_2}(s \star w_{a_2,\mu_2}) < \varepsilon
 \qquad \text{for all }s <s_\eps.
\]
For such values of $s$ we have $J(s \star (w_{a_1,\mu_1},w_{a_2,\mu_2})) <\eps$, because $\beta>0$. If $s \ge s_\eps$, then the interaction term can be bounded from below:
\[
\int_{\R^3} (s \star w_{a_1,\mu_1})^2 (s \star w_{a_2,\mu_2})^2 = e^{3s} \underbrace{\int_{\R^3} w_{a_1,\mu_1}^2 w_{a_2,\mu_2}^2}_{=: C_2 = C_2(a_1,a_2,\mu_1,\mu_2)>0} \ge C_2 e^{3s_{\eps}}.
\]
As a consequence, recalling that $\sup_s  I_{\mu_i}(s \star w_{a_i,\mu_i}) =  I_{\mu_i}(w_{a_i,\mu_i}) = \ell(a_i,\mu_i)$ (see again Lemma \ref{lem: minimax inequality}), we have
\begin{align*}
 J(s \star (w_{a_1,\mu_1},w_{a_2,\mu_2})) & \le I_{\mu_1}(s \star w_{a_1,\mu_1}) + I_{\mu_2}(s \star w_{a_2,\mu_2}) - \frac{C_2}{2} e^{3 s_{\eps}} \beta   \\\
 & \le  \ell(a_1,\mu_1) + \ell(a_2,\mu_2) - \frac{C_2}{2} e^{3 s_{\eps}} \beta,
 \end{align*}
 and the last term is strictly smaller than $\min\{ \ell(a_1,\mu_1), \ell(a_2,\mu_2)\}$ provided $\beta$ is sufficiently large.
\end{proof}

\begin{remark}\label{rem: estimate beta_2}
From the previous proof one can obtain an explicit estimate of $\beta_2$ in Theorem \ref{thm: existence for beta large}, in the following way. Choose as $\eps$ any value smaller than $\min\{ \ell(a_1,\mu_1), \ell(a_2,\mu_2)\}$. Then one can explicitly estimate $s_\eps$ using Lemma \ref{lem: basic properties radial dilation} (the smaller is $\eps$, the larger is $|s_\eps|$). Once that $\eps$ is fixed and $s_\eps$ is estimated, it remains to solve the inequality
\[
 \ell(a_1,\mu_1) + \ell(a_2,\mu_2) - \frac{C_2(a_1,a_2,\mu_1,\mu_2)}{2} e^{3 s_{\eps}} \beta < \min\{ \ell(a_1,\mu_1), \ell(a_2,\mu_2)\}
\]
with respect to $\beta$. An optimization in $0<\eps< \min\{ \ell(a_1,\mu_1), \ell(a_2,\mu_2)\}$ reveals that $\beta_2$ can be chosen as
\begin{multline*}
\beta_2 = \left[ \ell(a_1,\mu_1) + \ell(a_2,\mu_2) - \min\{ \ell(a_1,\mu_1), \ell(a_2,\mu_2)\} \right] \\
\cdot \left.\frac{2e^{-3s_\eps} }{C_2(a_1,a_2,\mu_1,\mu_2)} \right|_{\eps =  \min\{ \ell(a_1,\mu_1), \ell(a_2,\mu_2)\} }.
\end{multline*}
\end{remark}

\begin{proof}[Existence of a positive solution at level $d$] In our contradiction argument, we are supposing that $v_{n} \not \to \bar v$ strongly in $H^1(\R^3)$, and hence we have observed that $\bar v \equiv 0$ and $d= \ell(a_1,\mu_1)$. Let us consider the path
\[
\gamma(t) := \left( ((1-t) s_1 + t s_2) \star (w_{a_1,\mu_1},w_{a_2,\mu_2}) \right).
\]
Clearly, $\gamma \in \Gamma$, so that by Lemma \ref{lem: crucial estimate beta_2}
\[
d \le \sup_{t \in [0,1]} J(\gamma(t)) \le \sup_{s \in \R} J(s \star (w_{a_1,\mu_1},w_{a_2,\mu_2})) < \ell (a_1,\mu_1),
\]
a contradiction.
\end{proof}

Let us now turn to the variational characterization for $(\bar u,\bar v)$.
In what follows we aim at proving that
\[
\begin{aligned}
J(\bar u,\bar v)
 &= \inf\{J(u,v): (u,v)\in V\}\\
 &= \inf\left\{J(u,v):\text{$(u,v)$ is a solution of \eqref{system}-\eqref{normalization} for
     some $\lambda_1,\lambda_2$} \right\}
\end{aligned}
\]
Let us recall the definitions of $A$, see Lemma \ref{lem: minimax inequality 2}, and of $C$, see \eqref{def C}. We set
\[
A^+:= \{(u,v) \in A: u,v \ge 0 \text{ a.e. in $\R^3$}\}
\]
and
\[
C^+:= \{(u,v) \in C: u,v \ge 0 \text{ a.e. in $\R^3$}\}.
\]
For any $(u_1,v_1) \in A^+$ and $(u_2,v_2) \in C^+$, let
\[
\Gamma(u_1,v_1,u_2,v_2):= \left\{ \gamma \in \mathcal{C}([0,1],S_{a_1} \times S_{a_2}): \gamma(0) = (u_1,v_1) \text{ and } \gamma(1) = (u_2,v_2) \right\}.
\]

\begin{lemma}\label{lem: connectedness}
The sets $A^+$ and $C^+$ are connected by arcs, so that
\begin{equation}\label{c = something}
d = \inf_{\gamma \in \Gamma(u_1,v_1,u_2,v_2)} \max_{t \in [0,1]} J(\gamma(t))
\end{equation}
for every $(u_1,v_1) \in A^+$ and $(u_2,v_2) \in C^+$.
\end{lemma}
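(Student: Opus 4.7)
The plan is to establish the path-connectedness of $A^+$ and of $C^+$ via the dilation $\star$, and then to reduce the minimax value over any class $\Gamma(u_1,v_1,u_2,v_2)$ with endpoints in $A^+\times C^+$ to $d$ by concatenating paths. Throughout, nonnegativity is preserved both by $\star$ and by renormalized convex combinations of nonnegative functions.

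For $A^+$, given $(u_1,v_1),(u_2,v_2)\in A^+$, I would build an arc in three pieces. Since $\|\nabla(s\star u)\|_2^2=e^{2s}\|\nabla u\|_2^2$, the dilation arcs $s\mapsto s\star(u_i,v_i)$ for $s\in[-\sigma,0]$ stay in $A^+$ and carry each endpoint to $(-\sigma)\star(u_i,v_i)$, whose gradient norm is $O(e^{-\sigma})$. The middle piece is the componentwise renormalized linear interpolation between these two points: since both components are nonnegative with $L^2$-mass $a_i$, the renormalizing denominator $\|(1-t)U+tU'\|_2$ is bounded below by $a_i/\sqrt 2$, so the normalization is smooth in $t$ and the resulting gradient norm is still $O(e^{-\sigma})$. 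For $\sigma$ sufficiently large the middle piece thus lies in $A^+$, and concatenation yields the desired arc.

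For $C^+$ the outer pieces use the dilation with $s\ge 0$. The condition $J(u,v)\le 0$ for $(u,v)\in C^+$ reads $2A\le B$, with $A=\|\nabla u\|_2^2+\|\nabla v\|_2^2$ and $B=\int(\mu_1 u^4+2\beta u^2v^2+\mu_2 v^4)$, so
\[
\frac{\partial}{\partial s}J(s\star(u,v))=e^{2s}\bigl(A-\tfrac{3}{4}e^sB\bigr)\le 0\quad\text{for all }s\ge 0,
\]
while the gradient norm is increasing; hence $s\mapsto s\star(u_i,v_i)$ stays in $C^+$ on $[0,\sigma]$. The middle piece is $t\mapsto \sigma\star(P(t),Q(t))$, where $(P(t),Q(t))$ is the renormalized linear interpolation between $(u_1,v_1)$ and $(u_2,v_2)$. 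By continuity and the fact that no nonzero constant function belongs to $L^2(\R^3)$, both
\[
m_A:=\min_{t\in[0,1]}(\|\nabla P\|_2^2+\|\nabla Q\|_2^2)>0\quad\text{and}\quad m_B:=\min_{t}\int(\mu_1 P^4+2\beta P^2Q^2+\mu_2 Q^4)>0.
\]
Because the nonlinear term picks up a factor $e^{3\sigma}$ while the gradient picks up only $e^{2\sigma}$, taking $\sigma$ large produces simultaneously $J(\sigma\star(P(t),Q(t)))\le 0$ and $\|\nabla(\sigma\star(P(t),Q(t)))\|_2^2\ge 3K$ uniformly in $t$, so the middle piece lies in $C^+$. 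Securing this uniformity in $t$ is the main technical obstacle, and the point where the quantitative lower bounds $m_A,m_B>0$ are essential.

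To obtain \eqref{c = something}, set $d(u_1,v_1,u_2,v_2):=\inf_{\gamma\in\Gamma(u_1,v_1,u_2,v_2)}\max_{t}J(\gamma(t))$; since $(\bar u_1,\bar v_1)\in A^+$ and $(\bar u_2,\bar v_2)\in C^+$, the mountain pass level is $d=d(\bar u_1,\bar v_1,\bar u_2,\bar v_2)$. Given any $(u_1,v_1)\in A^+$, $(u_2,v_2)\in C^+$ and $\gamma\in\Gamma(u_1,v_1,u_2,v_2)$, I concatenate $\gamma$ with an $A^+$-arc from $(\bar u_1,\bar v_1)$ to $(u_1,v_1)$ and a $C^+$-arc from $(u_2,v_2)$ to $(\bar u_2,\bar v_2)$, obtaining $\tilde\gamma\in\Gamma(\bar u_1,\bar v_1,\bar u_2,\bar v_2)$. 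Along the prepended piece $J\le\sup_{A^+}J<\inf_B J$ by Lemma \ref{lem: minimax inequality 2}, and along the appended piece $J\le 0<\inf_B J$. By the intermediate value theorem applied to the continuous function $(u,v)\mapsto\|\nabla u\|_2^2+\|\nabla v\|_2^2$, the curve $\gamma$ meets $B$, so $\max_t J(\gamma)\ge \inf_B J$ and hence $\max_t J(\tilde\gamma)=\max_t J(\gamma)$. Taking infima gives $d\le d(u_1,v_1,u_2,v_2)$; the reverse inequality is symmetric, and \eqref{c = something} follows.
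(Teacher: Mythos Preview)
Your proposal is correct and follows essentially the same strategy as the paper: connect two points in $A^+$ (resp.\ $C^+$) by first dilating each one down (resp.\ up) via $s\star$, then bridging with a renormalized convex combination at the extreme dilation level, using the nonnegativity to bound the normalizing denominator away from zero. The paper parametrizes the bridge with $\cos t\,(s\star u_1)+\sin t\,(s\star u_2)$ rather than your affine $(1-t)U+tU'$, which is purely cosmetic; your explicit check that upward dilation preserves $C^+$ and your concatenation/crossing-of-$B$ derivation of \eqref{c = something} are in fact more detailed than the paper's, which leaves the latter as ``follows easily''.
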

\begin{proof}
The proof is similar to the one of Lemma 2.8 in \cite{Jeanjean}. Equality \eqref{c = something} follows easily, once we show that $A^+$ and $C^+$ are connected by arcs (recall the definition of $\Gamma$, see \eqref{def: Gamma 2}, and also that $\bar u_1,\bar v_1,\bar u_2,\bar v_2 \ge 0$ in $\R^N$).

Let $(u_1,v_1), (u_2,v_2) \in S_{a_1} \times S_{a_2}$ be nonnegative functions such that
\begin{equation}\label{additional assumptions}
\int_{\R^3} |\nabla u_1|+ |\nabla v_1|^2 = \int_{\R^3} |\nabla u_2|+ |\nabla v_2|^2 = \alpha^2
\end{equation}
for some $\alpha>0$. We define, for $s \in \R$ and $t \in [0,\pi/2]$,
\[
h(s,t)(x):= \left( \cos t(s \star u_1)(x) + \sin t (s \star u_2)(x), \cos t(s \star v_1)(x) + \sin t (s \star v_2)(x) \right).
\]
Although $h$ depends on $(u_1,v_1)$ and $(u_2,v_2)$, we will not stress such dependence in order to simplify the notation. Setting $h=(h_1,h_2)$, we have that $h_1(s,t),h_2(s,t) \ge 0$ a.e. in $\R^N$, and by direct computations it is not difficult to check that
\begin{align*}
\int_{\R^3} h_1^2(s,t) & = a_1^2+ \sin(2t) \int_{\R^3} u_1 u_2 \\
\int_{\R^3} h_2^2(s,t) & = a_2^2+ \sin(2t) \int_{\R^3} v_1 v_2 \\
\int_{\R^3} |\nabla h_1(s,t)|+ |\nabla h_2(s,t)|^2 &= e^{2s}\left( \alpha^2 +\sin(2t) \int_{\R^3} \nabla u_1 \cdot \nabla u_2 + \nabla v_1 \cdot \nabla v_2 \right)
\end{align*}
for all $(s,t) \in \R \times [0,\pi/2]$. From these expressions, and recalling \eqref{additional assumptions} and the fact that $u_1,v_1,u_2,v_2 \ge 0$ a.~e.\ in $\R^N$, it is possible to deduce that there exists $C>0$ (depending on $(u_1,v_1)$ and $(u_2,v_2)$) such that
\begin{align*}
 C e^{2s} \le \int_{\R^3} |\nabla h_1(s,t)|^2+ |\nabla h_2(s,t)|^2 \le 2 \alpha^2 e^{2s} \\
a_1^2 \le \int_{\R^3} h_1^2(s,t) \le 2a_1^2 \quad \text{and}  \quad a_2^2 \le \int_{\R^3} h_2^2(s,t) \le 2a_2^2
\end{align*}
Thus, we can define for $(s,t) \in \R \times [0,\pi/2]$ the function
\[
\hat h(s,t)(x) := \left( a_1 \frac{h_1(s,t)}{\|h_1(s,t)\|_{L^2}}, a_2 \frac{h_2(s,t)}{\|h_2(s,t)\|_{L^2}} \right).
\]
Notice that $\hat h(s,t) \in S_{a_1} \times S_{a_2}$ for every $(s,t)$. It results
\begin{equation}\label{estimate gradient h bar}
\frac{\min\{a_1^2, a_2^2\} C e^{2s} }{2 \max \{ a_1^2,a_2^2\} } \le \int_{\R^3} |\nabla \hat h_1(s,t)|^2+ |\nabla \hat h_2(s,t)|^2 \le \frac{2\alpha^2 e^{2s} \max \{ a_1^2, a_2^2\}}{\min\{a_1^2,a_2^2\}}.
\end{equation}
Furthermore, using again \eqref{additional assumptions} (and replacing if necessary $C$ with a smaller quantity), it is possible to check that
\begin{equation}\label{estimate 4 h bar}
\int_{\R^3} \hat h_1^4(s,t)  \ge C e^{3s} \quad \text{and} \quad \int_{\R^3} \hat h_2^4(s,t)  \ge C e^{3s}
\end{equation}
for all $(s,t) \in \R \times [0,\pi/2]$.

These estimates permits to prove the desired result. Let $(u_1,v_1),(u_2,v_2) \in A^+$, and let $\hat h$ de defined as in the previous discussion. By \eqref{estimate gradient h bar} there exists $s_0 >0$ such that
\[
\int_{\R^3} |\nabla \hat h_1(-s_0,t)|^2+ |\nabla \hat h_2(-s_0,t)|^2 \le K
 \qquad \text{for all }t \in \left[0,\frac{\pi}{2}\right],
\]
where $K$ has been defined in Lemma \ref{lem: minimax inequality 2}. For this choice of $s_0$, let
\[
\sigma_1(r):= \begin{cases} -r \star (u_1,v_1) = \hat h(-r,0) & 0 \le r \le s_0   \\
\hat h(-s_0,r-s_0) & s_0 < r \le s_0 + \frac{\pi}{2} \\
\left(r-2s_0- \frac{\pi}{2}\right) \star (u_2,v_2) = \hat h\left(r-2s_0-\frac{\pi}{2},\frac{\pi}{2}\right) & s_0 +\frac{\pi}{2} < r \le 2s_0+\frac{\pi}{2}.
\end{cases}
\]
It is not difficult to check that $\sigma$ is a continuous path connecting $(u_1,v_1)$ and $(u_2,v_2)$ and lying in $A^+$. To conclude that $A^+$ is connected by arcs, it remains to analyse the cases when condition \eqref{additional assumptions} is not satisfied. Suppose for instance
\[
\int_{\R^3} |\nabla u_1|^2+ |\nabla v_1|^2 > \int_{\R^3} |\nabla u_2|^2+ |\nabla v_2|^2.
\]
Then, by Lemma \ref{lem: easy computation}, there exists $s_1<0$ such that
\[
\int_{\R^3} |\nabla (s_1 \star u_1)|^2+ |\nabla (s_1 \star v_1)|^2 = \int_{\R^3} |\nabla u_2|^2+ |\nabla v_2|^2.
\]
Therefore, to connect $(u_1,v_1)$ and $(u_2,v_2)$ by a path in $A^+$ we can at first connect $(u_1,v_1)$ with $s_1 \star (u_1,v_1)$, and then connect this point with $(u_2,v_2)$.

To prove that also $C^+$ is connected by arcs, let us fix $(u_1,v_1),(u_2,v_2) \in C^+$, and suppose that \eqref{additional assumptions} holds (as before, we can always reduce to this case). By \eqref{estimate gradient h bar} and \eqref{estimate 4 h bar}, there exists $s_0>0$ such that
\[
\int_{\R^3} |\nabla \hat  h_1(s_0,t)|^2 + |\nabla \hat h_2(s_0,t)|^2 \ge 3K \quad \text{and} \quad J(\hat h(s_0,t)) \le 0
\]
for all $t \in [0,\pi/2]$. For this choice of $s_0$, we set
\[
\sigma_2(r):= \begin{cases} r \star (u_1,v_1) = \hat h(r,0) & 0 \le r \le s_0   \\
\hat h(s_0,r-s_0) & s_0 < r \le s_0 + \frac{\pi}{2} \\
\left(2s_0+\frac{\pi}{2}-r\right) \star (u_2,v_2) = \hat h\left(2s_0+\frac{\pi}{2}-r,\frac{\pi}{2}\right) & s_0 +\frac{\pi}{2} < r \le 2s_0+\frac{\pi}{2},
\end{cases}
\]
which is the desired continuous path connecting $(u_1,v_1)$ and $(u_2,v_2)$ in $C^+$.
\end{proof}

As we shall see, the previous lemma will be the key in proving the variational characterization of $(\bar u,\bar v)$. Let us recall the set
\[
V:= \left\{ (u,v) \in T_{a_1} \times T_{a_2} : G(u,v) = 0 \right\},
\]
from \eqref{def constainA}, and its radial subset
\begin{equation}\label{def constain}
V_{\rad}:= \left\{ (u,v) \in S_{a_1} \times S_{a_2} : G(u,v) = 0 \right\},
\end{equation}
where
\[
G(u,v) = \int_{\R^3} \left( |\nabla u|^2 + |\nabla v|^2\right) - \frac{3}{4} \int_{\R^3} \left( \mu_1 u^4 + 2\beta u^2 v^2 +\mu_2 v^4 \right).
\]

\begin{lemma}\label{lem: any solution in pohozaev}
If $(u,v)$ is a solution of \eqref{system}-\eqref{normalization} for some $\lambda_1,\lambda_2 \in \R$, then $(u,v) \in V$.
\end{lemma}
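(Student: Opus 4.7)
The plan is to combine the classical Pohozaev-type identity for the system with the identities obtained by testing each equation against its own unknown, and then eliminate the Lagrange multipliers $\lambda_1,\lambda_2$ to arrive at $G(u,v)=0$.

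First, I would invoke standard elliptic regularity to make sure the computations are justified. Since the nonlinearities are subcritical in $\R^3$, any $H^1$-solution $(u,v)$ of \eqref{system}--\eqref{normalization} lies in $C^2(\R^3)\cap H^2(\R^3)$ with sufficient decay at infinity; in particular $x\cdot\nabla u, x\cdot\nabla v$ are admissible test multipliers (this is a routine bootstrap/decay argument and is used repeatedly in papers on normalized solutions for similar systems).

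Next, the main computation has two pieces. Testing the first equation of \eqref{system} against $u$, the second against $v$, and integrating by parts yields
\[
\int_{\R^3}|\nabla u|^2-\lambda_1 a_1^2=\mu_1\int_{\R^3}u^4+\beta\int_{\R^3}u^2v^2,
\]
\[
\int_{\R^3}|\nabla v|^2-\lambda_2 a_2^2=\mu_2\int_{\R^3}v^4+\beta\int_{\R^3}u^2v^2.
\]
Adding them gives
\begin{equation}\label{planstep1}
\int_{\R^3}\!\bigl(|\nabla u|^2+|\nabla v|^2\bigr)=\lambda_1 a_1^2+\lambda_2 a_2^2+\int_{\R^3}\!\bigl(\mu_1u^4+\mu_2v^4+2\beta u^2v^2\bigr).
\end{equation}
Separately, multiplying each equation by $x\cdot\nabla u$, respectively $x\cdot\nabla v$, integrating over $\R^3$ and summing gives the Pohozaev identity for the system (with $N=3$):
\begin{equation}\label{planstep2}
\tfrac{1}{2}\int_{\R^3}\!\bigl(|\nabla u|^2+|\nabla v|^2\bigr)=3\!\int_{\R^3}\!\Bigl(\tfrac{\lambda_1}{2}u^2+\tfrac{\lambda_2}{2}v^2+\tfrac{\mu_1}{4}u^4+\tfrac{\mu_2}{4}v^4+\tfrac{\beta}{2}u^2v^2\Bigr).
\end{equation}

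Finally, I would use \eqref{planstep1} to substitute for $\lambda_1 a_1^2+\lambda_2 a_2^2$ inside \eqref{planstep2}, which eliminates the Lagrange multipliers and after simplification yields exactly
\[
\int_{\R^3}\bigl(|\nabla u|^2+|\nabla v|^2\bigr)=\tfrac{3}{4}\int_{\R^3}\bigl(\mu_1u^4+2\beta u^2v^2+\mu_2v^4\bigr),
\]
i.e. $G(u,v)=0$, so $(u,v)\in V$.

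The only non-trivial point in this plan is the rigorous justification of the Pohozaev identity \eqref{planstep2} for weak solutions in $H^1(\R^3)^2$; everything else is algebra. That justification, however, is by now completely standard for subcritical systems of this type (regularize by multiplying the Pohozaev multiplier with a cutoff $\varphi(x/R)$, integrate by parts, and pass to the limit $R\to\infty$ using the $H^2$-regularity and decay of $(u,v)$), so I expect the whole proof to be quite short.
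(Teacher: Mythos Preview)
Your proposal is correct and follows essentially the same route as the paper: state the Pohozaev identity for the system, test each equation against its own unknown to express $\lambda_1 a_1^2+\lambda_2 a_2^2$, and substitute to eliminate the Lagrange multipliers, arriving at $G(u,v)=0$. The paper's proof is terser (it simply asserts the Pohozaev identity without the regularity discussion), but the argument is the same.
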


\begin{proof}
The Pohozaev identity for system \eqref{system} reads
\begin{equation}\label{Pohozaev classical}
\frac12\int_{\R^3} |\nabla u|^2 + |\nabla v|^2
 = \int_{\R^3} \frac32\left(\lambda_1 u^2 + \lambda_2 v^2\right) + \frac34 \left(\mu_1 u^4
    + 2 \beta u^2 v^2 + \mu_2 v^4\right).
\end{equation}
On the other hand, testing \eqref{system} with $(u,v)$, we find
\begin{align*}
\lambda_1 \int_{\R^3} u^2 & =  \int_{\R^3} |\nabla u|^2 - \int_{\R^3}  \left(\mu_1 u^4 + \beta u^2 v^2\right) \\
\lambda_2 \int_{\R^3} v^2 & =  \int_{\R^3} |\nabla v|^2 - \int_{\R^3}  \left( \beta u^2 v^2 + \mu_2 v^4\right)
\end{align*}
which substituted into \eqref{Pohozaev classical} give the desired result.
\end{proof}

For $(u,v) \in T_{a_1} \times T_{a_2}$, let us set
\[
\Psi_{(u,v)}(s) := J(s \star (u,v)),
\]
where as before $s \star (u,v) = (s \star u, s \star v)$ for short, and $s \star u$ is defined in \eqref{def star}.

It is not difficult to check that $V,V_{\rad}$ are not empty. Actually, directly from the definition, one has much more.

\begin{lemma}\label{lem: expression s bar}
For every $(u,v) \in T_{a_1} \times T_{a_2}$, there exists a unique $s_{(u,v)} \in \R$ such that $(s_{(u,v)} \star (u,v)) \in V$. Moreover, $s_{(u,v)}$ is the unique critical point of $\Psi_{(u,v)}$, which is a strict maximum.
\end{lemma}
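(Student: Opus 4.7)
The plan is to reduce everything to an explicit one-variable calculus problem by writing $\Psi_{(u,v)}(s)$ in closed form. Since
\[
\int_{\R^3} |\nabla(s\star u)|^2 = e^{2s}\int_{\R^3}|\nabla u|^2,\qquad \int_{\R^3}(s\star u)^4 = e^{3s}\int_{\R^3} u^4,
\]
and analogously for $v$ and for the cross term $(s\star u)^2(s\star v)^2 = e^{3s} u^2v^2$, we can set
\[
A(u,v):=\int_{\R^3}\bigl(|\nabla u|^2+|\nabla v|^2\bigr),\qquad B(u,v):=\int_{\R^3}\bigl(\mu_1 u^4+2\beta u^2v^2+\mu_2 v^4\bigr),
\]
and obtain
\[
\Psi_{(u,v)}(s)=\frac{e^{2s}}{2}A(u,v)-\frac{e^{3s}}{4}B(u,v).
\]
Since $(u,v)\in T_{a_1}\times T_{a_2}$ implies $u\not\equiv 0$ and $v\not\equiv 0$, and since $\mu_1,\mu_2,\beta>0$, both $A$ and $B$ are strictly positive.

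Next, I would simply compute the derivative
\[
\Psi_{(u,v)}'(s)=e^{2s}A(u,v)-\tfrac{3}{4}e^{3s}B(u,v)=e^{2s}\Bigl(A(u,v)-\tfrac{3}{4}e^{s}B(u,v)\Bigr).
\]
The equation $\Psi_{(u,v)}'(s)=0$ has a unique solution, namely
\[
s_{(u,v)}:=\log\!\Bigl(\frac{4A(u,v)}{3B(u,v)}\Bigr),
\]
and clearly $\Psi_{(u,v)}'(s)>0$ for $s<s_{(u,v)}$ and $\Psi_{(u,v)}'(s)<0$ for $s>s_{(u,v)}$, so $s_{(u,v)}$ is a strict (in fact global) maximum of $\Psi_{(u,v)}$ (and the only critical point).

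Finally, I would observe that the condition $(s\star(u,v))\in V$ is literally
\[
e^{2s}A(u,v)=\tfrac{3}{4}e^{3s}B(u,v),
\]
which is the same equation as $\Psi_{(u,v)}'(s)=0$. Hence $s\star(u,v)\in V$ if and only if $s=s_{(u,v)}$, completing the proof. There is no real obstacle here: the point of the lemma is precisely that, after the scaling $\star$, the Pohozaev-type constraint $G=0$ and the critical-point equation for $\Psi_{(u,v)}$ coincide, and the function $s\mapsto \alpha e^{2s}-\gamma e^{3s}$ with $\alpha,\gamma>0$ has a unique critical point which is a strict maximum.
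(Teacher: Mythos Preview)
Your argument is correct and is exactly the computation the paper has in mind: the authors state the lemma without proof, remarking only that ``directly from the definition, one has much more,'' so your explicit reduction to the one-variable function $s\mapsto \frac{e^{2s}}{2}A-\frac{e^{3s}}{4}B$ with $A,B>0$ is the intended route.
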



\begin{lemma}\label{lem : equality in large space}
There holds
$$ \inf_V J = \inf_{V_{\rad}}J.$$
\end{lemma}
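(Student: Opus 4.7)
The plan is to prove the equality by showing the two inequalities, where $\inf_V J \le \inf_{V_{\rad}}J$ is trivial since $V_{\rad} \subset V$, and the reverse inequality $\inf_{V_{\rad}} J \le \inf_V J$ is obtained by a symmetrization-then-rescale procedure.

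More precisely, given $(u,v) \in V$, I would first pass to the Schwarz symmetric decreasing rearrangements $(u^*, v^*)$, which are radial, nonnegative, and preserve the $L^2$-masses so that $(u^*, v^*) \in S_{a_1} \times S_{a_2}$. By the Pólya--Szegő inequality,
\[
\int_{\R^3} |\nabla u^*|^2 + |\nabla v^*|^2 \le \int_{\R^3} |\nabla u|^2 + |\nabla v|^2,
\]
by equimeasurability $\int u^{*4} = \int u^4$ and $\int v^{*4} = \int v^4$, and by the Riesz rearrangement (Hardy--Littlewood) inequality $\int (u^*)^2 (v^*)^2 \ge \int u^2 v^2$. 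Since $\beta>0$, these combine to give
\[
\Psi_{(u^*,v^*)}(s) \le \Psi_{(u,v)}(s) \qquad \text{for every }s\in\R,
\]
where $\Psi_{(\phi,\psi)}(s) = J(s\star(\phi,\psi))$ is the fiber map introduced before Lemma~\ref{lem: expression s bar}.

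Next I would apply Lemma~\ref{lem: expression s bar} to $(u^*,v^*)$: there exists a unique $\bar s = s_{(u^*,v^*)} \in \R$ such that $\bar s \star (u^*,v^*) \in V$, and $\bar s$ is the unique (strict) maximum of $\Psi_{(u^*,v^*)}$. Since $s \star$ preserves radiality, in fact $\bar s \star (u^*,v^*) \in V_{\rad}$. On the other hand, because $(u,v) \in V$, Lemma~\ref{lem: expression s bar} gives $s_{(u,v)}=0$ and $J(u,v) = \Psi_{(u,v)}(0) = \max_{s\in\R} \Psi_{(u,v)}(s)$. Combining,
\[
J\bigl(\bar s \star (u^*,v^*)\bigr) = \max_{s\in\R}\Psi_{(u^*,v^*)}(s) \le \max_{s\in\R}\Psi_{(u,v)}(s) = J(u,v).
\]
Taking the infimum over $(u,v) \in V$ yields $\inf_{V_{\rad}} J \le \inf_V J$, which together with the trivial inclusion gives the claim.

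The only nontrivial point is making sure that the three rearrangement estimates line up with the correct signs in the definition of $J$; here the positivity of $\mu_1,\mu_2,\beta$ (and the $+\beta$ in the cross term inside $G$, which matches the $-\beta/2$ in $J$) is exactly what is needed so that passing from $(u,v)$ to $(u^*,v^*)$ only decreases $\Psi$ pointwise in $s$. Everything else is a direct application of the structural result Lemma~\ref{lem: expression s bar}, which identifies each fiber's maximum with its unique intersection with $V$.
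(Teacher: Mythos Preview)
Your proof is correct. Both your argument and the paper's rely on the same core idea: pass from $(u,v)\in V$ to the Schwarz rearrangement $(u^*,v^*)\in S_{a_1}\times S_{a_2}$, then dilate $s\star(u^*,v^*)$ back onto the Pohozaev set $V_{\rad}$ and compare energies. The implementations differ slightly. The paper argues by contradiction: from $G(u^*,v^*)\le G(u,v)=0$ one gets the rescaling parameter $s_0\le 0$, and then the identity $J|_{V}=\tfrac{1}{6}\int(|\nabla u|^2+|\nabla v|^2)$ is used to obtain $J(s_0\star(u^*,v^*))\le e^{2s_0}J(u,v)$, which contradicts $J(u,v)<\inf_{V_{\rad}}J$. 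Your route is direct and avoids both the sign discussion for the rescaling parameter and the explicit ``$\tfrac{1}{6}$'' identity: the pointwise comparison $\Psi_{(u^*,v^*)}(s)\le\Psi_{(u,v)}(s)$ together with Lemma~\ref{lem: expression s bar} immediately gives $\max_s\Psi_{(u^*,v^*)}\le\max_s\Psi_{(u,v)}=J(u,v)$. This is arguably cleaner, and it generalizes verbatim to the $k$-component setting (cf.\ Lemma~\ref{lem: identical on all space}).
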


\begin{proof}
In order to prove the lemma we assume by contradiction that there exists $(u,v) \in V$ such that
\begin{equation}\label{star}
0 < J(u,v) < \inf_{V_{\rad}}J.
\end{equation}
For $u \in H^1(\R^3)$ let $u^*$ denotes its Schwarz spherical rearrangement. By the properties of Schwarz symmetrization we have that $J(u^*,v^*) \leq J(u,v)$ and $G(u^*,v^*) \leq G(u,v)=0$. Thus there exists $s_0 \leq 0$ such that $G(s_0 \star (u^*,v^*))=0.$ We claim that
$$J(s_0 \star (u^*,v^*)) \leq e^{2 s_0} J(u^*,v^*).$$
Indeed 
using that $G(s_0 \star (u^*,v^*)) = G(u,v)=0$ we have
\begin{equation}\label{star2}
\begin{aligned}
J(s_0 \star (u^*,v^*)) &= \frac{e^{2s_0}}{6}\int_{\R^3} |\nabla u^*|^2 + |\nabla v^*|^2 \\
&\leq \frac{e^{2s_0}}{6}\int_{\R^3} |\nabla u|^2 + |\nabla v|^2 = e^{2s_0}J(u,v).
\end{aligned}
\end{equation}
Thus
$$0 < J(u,v) < \inf_{V_{\rad}}J \leq J(s_0 \star (u^*,v^*)) \leq e^{2s_0}J(u,v)$$
which contradicts $s_0\le0$. 
\end{proof}

We are ready to complete the proof of Theorem \ref{thm: existence for beta large}.

\begin{proof}[Conclusion of the proof of Theorem \ref{thm: existence for beta large}]
Recalling that any solution of \eqref{system}-\eqref{normalization} stays in $V$, if we have
\begin{equation}\label{above-bound}
 J(\bar u,\bar v)= d  \le \inf\{J(u,v): (u,v) \in V_{\rad}\}
\end{equation}
the thesis follows in view of Lemma \ref{lem : equality in large space}. In order to prove \eqref{above-bound} we choose an arbitrary $(u,v) \in V_{\rad}$ and show that $J(u,v) \ge d$.  At first, since $(u,v) \in V_{\rad}$ implies $(|u|,|v|) \in V_{\rad}$ and $J(u,v) = J(|u|,|v|)$, it is not restrictive to suppose that $u,v \ge 0$ a.e. in $\R^3$. Let us consider the function $\Psi_{(u,v)}$. By Lemma \ref{lem: easy computation} there exists $s_0 \gg 1$ such that $(-s_0) \star (u,v) \in A^+$ and $s_0 \star (u,v) \in C^+$. Therefore, the continuous path
\[
\gamma(t):= ((2t-1)s_0) \star (u,v) \qquad t \in [0,1]
\]
connects $A^+$ with $C^+$, and by Lemmas \ref{lem: connectedness} and \ref{lem: expression s bar} we infer that
\[
d \le \max_{t \in [0,1]} J(\gamma(t)) = J(u,v).
\]
Since this holds for all the elements in $V_{\rad}$, equality \eqref{above-bound} follows.
\end{proof}


\section{Systems with many components}

In this section we prove Theorem \ref{thm: large couplings many comp}. The problem under investigation is \eqref{system many}-\eqref{norm many}: we search for solutions to
\[
\begin{cases}
-\Delta u_i -\lambda_i u_i = \sum_{j=1}^k \beta_{ij} u_j^2 u_i & \text{in $\R^3$} \\
u_i \in H^1(\R^3)
\end{cases} \qquad i=1,\dots,k,
\]
satisfying
\[
\int_{\R^3} u_i^2 = a_i^2 \qquad i=1,\dots,k.
\]
Dealing with multi-components systems, we adopt the notation $\mf{u}:=(u_1,\dots,u_k)$. The first part of the proof is similar to the one of Theorem \ref{thm: existence for beta large}, therefore, we only sketch it.

For $\mf{u} \in S_{a_1} \times \cdots \times S_{a_k}$ (recall definition \eqref{sphere radial}) and $s \in \R$, we consider
\[
J(s \star \mf{u} )
 = \frac{e^{2s}}{2} \int_{\R^3} \sum_i |\nabla u_i|^2
    - \frac{e^{3s}}{4}\int_{\R^3} \sum_{i,j} \beta_{ij} u_i^2 u_j^2
\]
and
\begin{equation}\label{star3}
G(\mf{u}) = \int_{\R^3} \sum_{i=1}^k |\nabla u_i|^2 -\frac{3}{4} \int_{\R^3} \sum_{i,j=1}^k \beta_{ij} u_i^2 u_j^2.
\end{equation}
It is not difficult to extend Lemma \ref{lem: minimax inequality 2} for $k>2$, proving the following:
\begin{lemma}\label{lem: minimax inequality many}
There exists $K>0$ small enough such that
\[
0<\sup_A J< \inf_B J \quad \text{and} \quad  G(u),J(u)>0 \quad \forall u \in A,
\]
where
\[
A := \left\{\mf{u}  \in S_{a_1} \times S_{a_k}: \int_{\R^3} \sum_{i=1}^k |\nabla u_i|^2 \le K \right\} 
\]
and
\[
B:= \left\{\mf{u}  \in S_{a_1} \times S_{a_k}: \int_{\R^3} \sum_{i=1}^k |\nabla u_i|^2 =2K  \right\}.
\]
\end{lemma}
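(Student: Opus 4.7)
The plan is to follow the same strategy used for Lemma \ref{lem: minimax inequality 2}, which only requires a uniform Gagliardo--Nirenberg-type control on the quartic term appearing in $J$ and $G$. The key observation is that all the quartic pieces can be bounded above by a power of the total Dirichlet energy, after which the choice of a sufficiently small $K$ produces all three required conclusions.

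First, I would estimate the full interaction $\sum_{i,j} \beta_{ij} \int u_i^2 u_j^2$. Using Cauchy--Schwarz on the mixed terms, one has $\int u_i^2 u_j^2 \le \bigl(\int u_i^4\bigr)^{1/2}\bigl(\int u_j^4\bigr)^{1/2}\le \tfrac12\bigl(\int u_i^4+\int u_j^4\bigr)$, hence there is a constant $M=M(\beta_{ij},k)>0$ with
\[
\sum_{i,j=1}^k \beta_{ij}\int_{\R^3}u_i^2u_j^2 \;\le\; M\sum_{i=1}^k\int_{\R^3}u_i^4.
\]
Applying \eqref{Gagliardo-Nirenberg} to each $u_i$ with $\|u_i\|_2=a_i$ and then a standard power-mean inequality yields
\[
\sum_{i=1}^k\int_{\R^3}u_i^4 \;\le\; S\sum_{i=1}^k a_i\Bigl(\int_{\R^3}|\nabla u_i|^2\Bigr)^{3/2} \;\le\; C\Bigl(\sum_{i=1}^k\int_{\R^3}|\nabla u_i|^2\Bigr)^{3/2},
\]
for a constant $C=C(a_1,\dots,a_k,\beta_{ij},k)>0$ independent of $\mf{u}\in S_{a_1}\times\cdots\times S_{a_k}$.

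Second, write $t(\mf{u}):=\sum_i\int_{\R^3}|\nabla u_i|^2$. The previous inequalities give, on $S_{a_1}\times\cdots\times S_{a_k}$,
\[
J(\mf{u})\ge \tfrac12 t(\mf{u})-\tfrac{MC}{4}t(\mf{u})^{3/2}, \qquad G(\mf{u})\ge t(\mf{u})-\tfrac{3MC}{4}t(\mf{u})^{3/2}.
\]
Both right-hand sides are strictly positive on $\{0<t\le K\}$ provided $K$ is small enough, which already delivers $J,G>0$ on $A$ (after fixing the sign of $K$ so that the two cubic corrections are dominated by the linear ones). This handles the second assertion of the lemma.

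Third, for the separation $\sup_AJ<\inf_BJ$, take $\mf{u}_1\in A$ and $\mf{u}_2\in B$; then $t(\mf{u}_2)=2K\ge t(\mf{u}_1)$ and
\[
J(\mf{u}_2)-J(\mf{u}_1)\;\ge\;\tfrac12\bigl(t(\mf{u}_2)-t(\mf{u}_1)\bigr)-\tfrac{MC}{4}t(\mf{u}_2)^{3/2}\;\ge\;\tfrac{K}{2}-\tfrac{MC}{4}(2K)^{3/2},
\]
which is strictly positive once $K$ is taken small enough, independently of $\mf{u}_1,\mf{u}_2$. Choosing $K$ so small that all three requirements above hold simultaneously finishes the proof. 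There is no genuine obstacle here: the argument is essentially a transcription of Lemma \ref{lem: minimax inequality 2} to $k$ components, the only care being the reduction of the $k^2$ interaction terms to a clean multiple of $\sum_i\int u_i^4$ so that the single-variable Gagliardo--Nirenberg bound can be applied uniformly.
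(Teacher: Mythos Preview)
Your proof is correct and follows exactly the approach the paper intends: the paper does not give a separate proof of this lemma but simply states that it is the straightforward $k$-component extension of Lemma~\ref{lem: minimax inequality 2}, which is precisely what you carry out. The only addition beyond Lemma~\ref{lem: minimax inequality 2} is the positivity of $G$ on $A$, and your lower bound $G(\mf{u})\ge t(\mf{u})-\tfrac{3MC}{4}t(\mf{u})^{3/2}$ handles this cleanly.
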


We also introduce the set
\begin{equation}\label{def C many}
C:= \left\{\mf{u}  \in S_{a_1} \times \cdots \times  S_{a_k}: \int_{\R^3} \sum_{i=1}^k |\nabla u_i|^2 \ge 3K \text{ and } J(\mf{u} ) \le 0 \right\},
\end{equation}
and we recall the definition of $w_{a,\mu}$, given in Proposition \ref{prop: explicit}. It is clear that there exist $s_1<0$ and $s_2>0$ such that
\[
s_1 \star (w_{a_1,\beta_{11}},\dots,w_{a_k,\beta_{kk}})=:\hat{\mf{u}} \in A
\]
and
\[
s_2 \star (w_{a_1,\beta_{11}},\dots,w_{a_k,\beta_{kk}})=:\tilde{\mf{u}} \in C.
\]
Setting
\[
\Gamma:= \left\{ \gamma \in \mathcal{C}([0,1],S_{a_1} \times \cdots \times S_{a_k}) :
\gamma(0) = \hat{\mf{u} }, \
\gamma(1) = \tilde{\mf{u}} \right\},
\]
by Lemma \ref{lem: minimax inequality many}, it is possible to argue as in Lemma \ref{lem: appl minimax 2}, showing that there exists a Palais-Smale sequence $(\mf{u}_n)$ for $J$ at level
\[
d:= \inf_{\gamma \in \Gamma} \max_{ t \in [0,1]} J(\gamma(t)),
\]
satisfying the additional condition
\begin{equation}\label{weak pohozaev many}
G(\mf{u})=\int_{\R^3} \sum_{i=1}^k |\nabla u_i|^2 - \frac{3}{4} \int_{\R^3} \sum_{i,j=1}^k \beta_{ij} u_i^2 u_j^2 = o(1),
\end{equation}
with $o(1) \to 0$ as $n \to \infty$. Moreover $u_{i,n}^- \to 0$ a.e. in $\R^3$ as $n \to \infty$, for any $i$. Notice that the value $d$ depends on all the masses $a_i$ and on all the couplings $\beta_{ij}$.

It remains to show that $\mf{u}_n \to \bar{\mf{u}}$ strongly in $H^1,$ and the limit is a solution of \eqref{system many}-\eqref{norm many}. In order to do this, we argue as for the $2$-components system: thanks to \eqref{weak pohozaev many}, up to a subsequence $\mf{u}_n \to \bar{\mf{u}}$ weakly in $H^1(\R^3,\R^k)$, strongly in $L^4(\R^3,\R^k)$, a.e. in $\R^3$. As before we arrive at the conclusion that $\bar{\mf{u}}$ is a solution of \eqref{system many} for some $\lambda_1,\dots,\lambda_k \in \R$. We can also suppose that one of these parameters, say $\lambda_1$, is strictly negative. Thus, Lemma \ref{lem: strong convergence} implies that $u_{1,n} \to \bar u_1$ strongly in $H^1(\R^3)$. If by contradiction $u_{j,n} \not \to \bar u_j$ strongly in $H^1(\R^3)$ for some $j$, then $\lambda_j \ge 0$, and by Lemma A.2 in \cite{Ikoma} we deduce that $\bar u_j \equiv 0$. To complete the proof, we aim at showing that $\bar u_i \not \equiv 0$ for every $i$, and to do this it is necessary to substantially modify the argument used for Theorem \ref{thm: existence for beta small}.

We divide the set of indexes $\{1,\dots,k\}$ in two subsets:
\[
\mathcal{I}_1:=\{i \in \{1,\dots,k\}: \lambda_i<0\} \quad \text{and} \quad  \mathcal{I}_2:=\{i \in \{1,\dots,k\}: \lambda_i \ge 0\}.
\]
Notice that $1 \in \mathcal{I}_1$, so that the cardinality of $\mathcal{I}_2$ is at most $k-1$, and that the absurd assumption can be written as $\mathcal{I}_2 \neq \emptyset$. Up to a relabelling, we can suppose for the sake of simplicity that
\[
\mathcal{I}_1:= \{1,\dots,m\} \quad \text{and} \quad \mathcal{I}_2:=\{m+1,\dots,k\}
\]
for some $1\le m<k$. By strong convergence (and by the maximum principle)
\[
\begin{cases}
-\Delta \bar u_i -\lambda_i \bar u_i = \sum_{j \in \mathcal{I}_1} \beta_{ij} \bar u_i \bar u_j^2 & \text{in $\R^3$} \\
\bar u_i >0 & \text{in $\R^3$} \\
\int_{\R^3} \bar u_i^2 = a_i^2,
\end{cases} \qquad \forall i \in \mathcal{I}_1,
\]
while $\bar u_i \equiv 0$ for every $i \in \mathcal{I}_2$. As in Lemma \ref{lem: any solution in pohozaev}, this implies that $(\bar u_{1},\dots,\bar u_{m}) \in V_{\rad}^{\mathcal{I}_1}$, where
\[
V_{\rad}^{\mathcal{I}_1}:= \left\{\mf{u} \in S_{a_{1}} \times \cdots \times S_{a_{m}}: \int_{\R^3} \sum_{i=1}^m |\nabla u_i|^2 = \frac{3}{4} \int_{\R^3} \sum_{i,j=1}^m \beta_{i j} u_i^2 u_j^2 \right\}.
\]
Therefore
\begin{equation}\label{J di uguale a}
J(\bar{\mf{u}}) = J(\bar u_{1},\dots,\bar u_{m},0,\dots,0) \ge \inf_{V_{\rad}^{\mathcal{I}_1}} J.
\end{equation}
Notice that in the last term we used $J$ to denote the functional associated to a $m$ components system, while in the previous terms $J$ is used for the functional associated to the full $k$ components system. This should not be a source of misunderstanding.

The value $J(\mf{u})$ can also be characterized in a different way: by \eqref{weak pohozaev many}, strong $L^4$-convergence, and recalling that $(\bar u_{1},\dots,\bar u_{m}) \in V_{\rad}^{\mathcal{I}_1}$, we have also
\begin{equation}\label{c uguale a J di}
\begin{split}
d & =\lim_{n \to \infty} J(\mf{u}_n) = \lim_{n \to \infty} \frac{1}{8}\int_{\R^3} \sum_{i,j=1}^k \beta_{ij} u_{i,n}^2 u_{j,n}^2 \\
& = \frac{1}{8}\int_{\R^3} \sum_{i,j=1}^m \beta_{ij} \bar u_{i}^2 \bar u_{j}^2 = J(\bar u_1,\dots \bar u_m,0,\dots,0) = J(\bar{\mf{u}}).
\end{split}
\end{equation}
A comparison between \eqref{J di uguale a} and \eqref{c uguale a J di} reveals that
\begin{equation}\label{eq from which abs}
d \ge \inf_{V_{\rad}^{\mathcal{I}_1}} J.
\end{equation}
To find a contradiction, we shall provide an estimate from above on $d$, an estimate from below on $\inf_{V_{\rad}^{\mathcal{I}_1}} J$, and show that these are not compatible with \eqref{eq from which abs}.

\medskip

\noindent \paragraph{\textbf{Upper estimate on $d$.}} First of all, we state the extension of Lemma \ref{lem: expression s bar}.

\begin{lemma}\label{lem: expression s bar many}
For every $\mf{u} \in S_{a_1} \times \cdots \times S_{a_k}$, there exists a unique $s_{\mf{u}} \in \R$ such that $(s_{\mf{u}} \star (u,v)) \in V_{\rad}$. Moreover, $s_{\mf{u}}$ is the unique critical point of $\Psi_{\mf{u}}(s):=J(s \star \mf{u})$, which is a strict maximum.
\end{lemma}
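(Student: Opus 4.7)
The plan is to mimic the one-variable calculus underlying Lemma \ref{lem: expression s bar}. Using the scaling identities $\int|\nabla(s\star u_i)|^2 = e^{2s}\int|\nabla u_i|^2$ and $\int (s\star u_i)^2(s\star u_j)^2 = e^{3s}\int u_i^2 u_j^2$, I would first write
\[
\Psi_{\mf{u}}(s) = J(s\star \mf{u}) = \frac{e^{2s}}{2}A(\mf{u}) - \frac{e^{3s}}{4}B(\mf{u}),
\]
where $A(\mf{u}) := \int_{\R^3}\sum_i |\nabla u_i|^2$ and $B(\mf{u}) := \int_{\R^3}\sum_{i,j} \beta_{ij}u_i^2 u_j^2$. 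Since $u_i\in S_{a_i}$ with $a_i>0$ the component $u_i$ is not identically zero, so $A(\mf{u})>0$; and since $\beta_{ii}>0$ we have $B(\mf{u})\ge \beta_{11}\int u_1^4>0$. Both coefficients are therefore strictly positive.

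Next I would differentiate and observe that, in view of \eqref{star3},
\[
\Psi_{\mf{u}}'(s) = e^{2s}A(\mf{u}) - \frac{3e^{3s}}{4}B(\mf{u}) = G(s\star \mf{u}).
\]
Hence $(s\star \mf{u})\in V_{\rad}$ if and only if $\Psi_{\mf{u}}'(s)=0$, and the latter equation reduces to $e^s = 4A(\mf{u})/(3B(\mf{u}))$, which admits the unique solution
\[
s_{\mf{u}} := \log\!\left(\frac{4A(\mf{u})}{3B(\mf{u})}\right).
\]
This simultaneously yields existence and uniqueness of the claimed $s_{\mf{u}}$ and identifies it with the unique critical point of $\Psi_{\mf{u}}$.

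Finally, to see that the critical point is a strict maximum, I would rewrite
\[
\Psi_{\mf{u}}'(s) = e^{2s}\left(A(\mf{u}) - \frac{3e^s}{4}B(\mf{u})\right),
\]
so that $\Psi_{\mf{u}}'$ is positive for $s<s_{\mf{u}}$ and negative for $s>s_{\mf{u}}$; combined with the explicit asymptotics $\Psi_{\mf{u}}(s)\to 0^+$ as $s\to-\infty$ and $\Psi_{\mf{u}}(s)\to-\infty$ as $s\to+\infty$ (a direct consequence of the $e^{2s}$--$e^{3s}$ competition), this forces $s_{\mf{u}}$ to be a strict global maximum. No serious obstacle is anticipated: the whole statement is a one-variable calculus fact, and the only structural input from the system is the positivity of the diagonal couplings $\beta_{ii}$, used to guarantee $B(\mf{u})>0$.
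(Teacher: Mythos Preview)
Your proof is correct and is exactly the elementary one-variable calculus argument that the paper has in mind; the paper states this lemma (and its $k=2$ predecessor, Lemma \ref{lem: expression s bar}) without proof, treating it as immediate from the definition of $\Psi_{\mf{u}}$. One small remark: your inequality $B(\mf{u})\ge \beta_{11}\int u_1^4$ tacitly uses that \emph{all} the $\beta_{ij}$ are nonnegative, not merely the diagonal ones, so the closing sentence about ``only the diagonal couplings'' is a slight overstatement---but this is harmless here, since in the setting of Theorem \ref{thm: large couplings many comp} every $\beta_{ij}>0$.
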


We shall now prove two variational characterizations for $d$.

\begin{lemma}\label{lem: var char 1}
It results that
\[
d= \inf_{V_{\rad}} J,
\]
where
\[
V_{\rad} := \left\{\mf{u} \in S_{a_1} \times \cdots \times S_{a_k}:
\int_{\R^3} \sum_{i=1}^k |\nabla u_i|^2 = \frac{3}{4} \int_{\R^3} \sum_{i,j=1}^k \beta_{i j} u_i^2 u_j^2 \right\}.
\]
\end{lemma}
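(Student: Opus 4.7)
The strategy will follow the corresponding step at the end of the proof of Theorem \ref{thm: existence for beta large}, now adapted to $k$ components. The plan is to establish the two inequalities $d \ge \inf_{V_{\rad}} J$ and $d \le \inf_{V_{\rad}} J$ separately.

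For the inequality $d \ge \inf_{V_{\rad}} J$, I will show that every $\gamma \in \Gamma$ meets $V_{\rad}$. By Lemma \ref{lem: minimax inequality many}, $G(\hat{\mf{u}}) > 0$. On $C$, the condition $J(\mf{u}) \le 0$ yields $\int_{\R^3}\sum_{i,j}\beta_{ij}u_i^2u_j^2 \ge 2\int_{\R^3}\sum_i|\nabla u_i|^2$, and using \eqref{star3} this gives
\[
G(\mf{u}) \le -\frac{1}{2}\int_{\R^3}\sum_i|\nabla u_i|^2 < 0,
\]
so $G(\tilde{\mf{u}}) < 0$. Continuity of $G$ along any $\gamma \in \Gamma$ then produces $t_0 \in (0,1)$ with $\gamma(t_0) \in V_{\rad}$, so $\max_t J(\gamma(t)) \ge \inf_{V_{\rad}} J$, and taking the infimum over $\gamma$ yields the desired inequality.

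For the reverse inequality I construct, for each $\mf{u} \in V_{\rad}$, an admissible path with $J$-maximum equal to $J(\mf{u})$. Replacing each $u_i$ by $|u_i|$ (which preserves $J$ and $V_{\rad}$), I may assume all $u_i \ge 0$. By Lemma \ref{lem: expression s bar many}, $s = 0$ is the unique strict maximum of $\Psi_{\mf{u}}(s) = J(s \star \mf{u})$, and an elementary calculation (the $k$-component version of Lemma \ref{lem: easy computation}) gives $\Psi_{\mf{u}}(s) \to 0^+$ as $s \to -\infty$ and $\to -\infty$ as $s \to +\infty$, while $e^{2s}\int\sum_i|\nabla u_i|^2$ ranges over $(0,+\infty)$. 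Thus I can pick $s_- \ll 0 \ll s_+$ with $s_- \star \mf{u} \in A$ and $s_+ \star \mf{u} \in C$, each having nonnegative components. The dilation segment $s \in [s_-, s_+] \mapsto s \star \mf{u}$ has $J$-maximum $J(\mf{u})$, attained only at $s = 0$. To upgrade this to an element of $\Gamma$, I join $\hat{\mf{u}}$ to $s_- \star \mf{u}$ inside the nonnegative-component subset $A^+$ of $A$, and $s_+ \star \mf{u}$ to $\tilde{\mf{u}}$ inside the nonnegative-component subset $C^+$ of $C$; this requires extending Lemma \ref{lem: connectedness} to $k$ components. Along the $A^+$-piece $J \le \sup_A J$, which is strictly less than $J(\mf{u})$ once the constant $K$ in Lemma \ref{lem: minimax inequality many} is chosen small enough, because the Gagliardo-Nirenberg inequality combined with $J(\mf{u}) = \tfrac{1}{6}\int\sum_i|\nabla u_i|^2$ on $V_{\rad}$ gives a uniform positive lower bound for $J$ there. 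Along the $C^+$-piece $J \le 0 < J(\mf{u})$. Concatenation produces $\gamma \in \Gamma$ with $\max_t J(\gamma(t)) = J(\mf{u})$, so $d \le J(\mf{u})$, and the infimum over $V_{\rad}$ concludes.

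The main obstacle is the $k$-component extension of Lemma \ref{lem: connectedness}, i.e. path-connectedness of $A^+$ and $C^+$. Conceptually nothing new is needed: the trigonometric interpolation $h_i(s,t) = \cos(t)\,(s \star u_i^{(1)}) + \sin(t)\,(s \star u_i^{(2)})$ is applied componentwise and followed by componentwise $L^2$-renormalization; the required gradient, $L^2$- and $L^4$-bounds extend with only cosmetic changes from the two-component case. Once path-connectedness of $A^+$ and $C^+$ is in hand, the minimax path is assembled by concatenation and the proof becomes routine.
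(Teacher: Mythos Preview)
Your proposal is correct and follows essentially the same approach as the paper: both directions rest on the path-connectedness of $A^+$ and $C^+$ (the $k$-component extension of Lemma \ref{lem: connectedness}), the sign change of $G$ along any admissible path, and the dilation curve through a given $\mf{u}\in V_{\rad}$ attaining its maximum exactly at $J(\mf{u})$. The paper packages the upper bound by first noting that the minimax value is independent of the choice of endpoints in $A^+\times C^+$, while you equivalently concatenate connecting arcs back to the fixed endpoints $\hat{\mf{u}},\tilde{\mf{u}}$; the content is the same. Your extra remark about possibly shrinking $K$ so that $\sup_A J<J(\mf{u})$ is harmless but unnecessary: since every path in $\Gamma$ crosses $B$, one has $d\ge\inf_B J>\sup_A J$, and thus $d\le\max\{\sup_A J,J(\mf{u})\}$ already forces $d\le J(\mf{u})$.
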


\begin{proof}
This can be done arguing as in the proof of Theorem \ref{thm: existence for beta large}. Firstly, we introduce the sets
\[
A^+:= \left\{\mf{u} \in A: \text{$u_i \ge 0$ a.e. in $\R^3$ for every $i$}\right\} 
\]
and
\[
C^+ := \left\{\mf{u} \in C: \text{$u_i \ge 0$ a.e. in $\R^3$ for every $i$}\right\},
\]
where $A$ and $C$ have been defined in Lemma \ref{lem: minimax inequality many} and \eqref{def C many}, respectively. Slightly modifying the proof of Lemma \ref{lem: connectedness}, one can check that $A^+$ and $C^+$ are connected by arcs, so that for any $\mf{u} \in A^+$ and $\mf{u}' \in C^+$ it results that
\[
d= \inf_{\gamma \in \Gamma(\mf{u}, \mf{u}')} \max_{t \in [0,1]} J(\gamma(t)),
\]
where
\[
\Gamma(\mf{u},\mf{u}'):= \left\{ \gamma \in \mathcal{C}([0,1],S_{a_1} \times \cdots \times S_{a_k}): \gamma(0) =\mf{u}, \  \gamma(1) =\mf{u}'  \right\}.
\]
As in the conclusion of the proof of Theorem \ref{thm: existence for beta large}, from this it follows that
\begin{equation}\label{101}
d \le \inf_{V_{\rad}} J.
\end{equation}
We have to check that also the reverse inequality holds. To this aim, we claim that
\begin{equation}\label{claim: on intersection}
\text{for any path $\gamma$ from $A^+$ to $C^+$ there exists $t \in (0,1)$ such that $\gamma(t) \in V_{\rad}$}.
\end{equation}
Once that the claim is proved, it is possible to observe that for any such $\gamma$
\[
\inf_{V_{\rad}} J \le J(\gamma(t)) \le \max_{t \in [0,1]} J(\gamma(t)),
\]
and taking the infimum over all the admissible $\gamma$ from $A^+$ to $C^+$, we deduce that
\[
\inf_{V_{\rad}} J \le d,
\]
which together with \eqref{101} completes the proof. Thus, it remains only to verify claim \eqref{claim: on intersection}. Notice that
\[
\mf{u}\in V_{\rad} \quad \Longleftrightarrow \quad G(\mf{u}) =0,
\]
where $G$ has been defined in \eqref{star3}. There we showed that $G(\mf{u})>0$ for every $\mf{u} \in A$. Moreover, $J(\mf{u}) \le 0$ for every $\mf{u} \in C^+$, which directly implies
\[
G(\mf{u}) \le -\frac{1}{4} \int_{\R^3} \sum_{i,j} \beta_{ij}  u_i^2 u_j^2 <0
 \qquad \text{for all } \mf{u} \in C.
\]
Thus, by continuity, for any $\mf{u} \in A^+$, any $\mf{u}' \in C^+$, and any $\gamma \in \Gamma(\mf{u},\mf{u}')$, there exists $t \in (0,1)$ such that $G(\gamma(t))=0$, which proves the claim.
\end{proof}

We introduce a Rayleigh-type quotient as
\[
\mathcal{R}(\mf{u}):=  \frac{8 \left( \int_{\R^3} \sum_{i=1}^k |\nabla u_i|^2 \right)^3  }{27 \left(\int_{\R^3} \sum_{i,j=1}^k \beta_{ij} u_i^2 u_j^2 \right)^2}.
\]

\begin{lemma}\label{lem: rayleigh}
There holds that
\[
d = \inf_{V_{\rad}} J =\inf_{S_{a_1} \times \cdots \times S_{a_k}} \mathcal{R}.
\]
\end{lemma}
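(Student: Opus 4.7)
The first equality $d=\inf_{V_{\rad}}J$ is Lemma \ref{lem: var char 1}, so the plan is to identify $J$ and $\mathcal{R}$ on the Pohozaev manifold $V_{\rad}$ and then exploit the scale invariance of $\mathcal{R}$ to transfer the infimum to the full sphere. Concretely, I would first record that under the dilation $\mf{u}\mapsto s\star\mf{u}$ one has
\[
\int_{\R^3}\sum_i|\nabla(s\star u_i)|^2=e^{2s}\int_{\R^3}\sum_i|\nabla u_i|^2,\qquad \int_{\R^3}\sum_{i,j}\beta_{ij}(s\star u_i)^2(s\star u_j)^2=e^{3s}\int_{\R^3}\sum_{i,j}\beta_{ij}u_i^2u_j^2,
\]
so the numerator of $\mathcal{R}$ scales like $e^{6s}$ and so does the denominator, hence $\mathcal{R}(s\star\mf{u})=\mathcal{R}(\mf{u})$ for every $s\in\R$ and every $\mf{u}\in S_{a_1}\times\cdots\times S_{a_k}$. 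Note that $\mathcal{R}$ is well defined on this set since $a_i,\beta_{ii}>0$ force $\int\sum_{i,j}\beta_{ij}u_i^2u_j^2\ge\min_i\beta_{ii}\sum_i\int u_i^4>0$.

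Next I would compute $J$ and $\mathcal{R}$ on $V_{\rad}$ in closed form. For $\mf{v}\in V_{\rad}$, the identity $G(\mf{v})=0$ rewrites as
\[
\int_{\R^3}\sum_{i,j}\beta_{ij}v_i^2v_j^2=\frac{4}{3}\int_{\R^3}\sum_i|\nabla v_i|^2,
\]
and plugging this into $J$ yields
\[
J(\mf{v})=\frac12\int_{\R^3}\sum_i|\nabla v_i|^2-\frac13\int_{\R^3}\sum_i|\nabla v_i|^2=\frac16\int_{\R^3}\sum_i|\nabla v_i|^2.
\]
Substituting the same identity into $\mathcal{R}$ gives
\[
\mathcal{R}(\mf{v})=\frac{8\left(\int_{\R^3}\sum_i|\nabla v_i|^2\right)^3}{27\left(\frac43\int_{\R^3}\sum_i|\nabla v_i|^2\right)^2}=\frac16\int_{\R^3}\sum_i|\nabla v_i|^2,
\]
so $J=\mathcal{R}$ pointwise on $V_{\rad}$.

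Now the two inequalities are immediate. On the one hand, for every $\mf{v}\in V_{\rad}$ one has $J(\mf{v})=\mathcal{R}(\mf{v})\ge\inf_{S_{a_1}\times\cdots\times S_{a_k}}\mathcal{R}$, so $\inf_{V_{\rad}}J\ge\inf\mathcal{R}$. On the other hand, given any $\mf{u}\in S_{a_1}\times\cdots\times S_{a_k}$, Lemma \ref{lem: expression s bar many} supplies a unique $s_{\mf{u}}\in\R$ such that $s_{\mf{u}}\star\mf{u}\in V_{\rad}$; using scale invariance of $\mathcal{R}$ and the pointwise equality $J=\mathcal{R}$ on $V_{\rad}$,
\[
\mathcal{R}(\mf{u})=\mathcal{R}(s_{\mf{u}}\star\mf{u})=J(s_{\mf{u}}\star\mf{u})\ge\inf_{V_{\rad}}J,
\]
so $\inf\mathcal{R}\ge\inf_{V_{\rad}}J$. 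Combining this chain with the first equality in the statement, already proved in Lemma \ref{lem: var char 1}, yields $d=\inf_{V_{\rad}}J=\inf_{S_{a_1}\times\cdots\times S_{a_k}}\mathcal{R}$. There is no genuine obstacle here once Lemma \ref{lem: expression s bar many} is in hand; the only thing to be careful about is checking that the denominator of $\mathcal{R}$ does not vanish on the sphere, which is guaranteed by the positivity of the $\beta_{ii}$ and $a_i$.
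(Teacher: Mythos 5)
Your proof is correct and follows essentially the same route as the paper's: show $J$ and $\mathcal{R}$ agree pointwise on $V_{\rad}$ (giving $\inf_{V_{\rad}}J\ge\inf\mathcal{R}$), then use the scale invariance $\mathcal{R}(s\star\mf{u})=\mathcal{R}(\mf{u})$ together with Lemma \ref{lem: expression s bar many} to project an arbitrary $\mf{u}$ onto $V_{\rad}$ and obtain the reverse inequality. The remark about the denominator of $\mathcal{R}$ not vanishing is a harmless extra check the paper leaves implicit.
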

\begin{proof}
If $\mf{u} \in V_{\rad}$, then
\[
\frac{4\int_{\R^3} \sum_{i=1}^k |\nabla u_i|^2 }{3 \int_{\R^3} \sum_{i,j} \beta_{ij} u_i^2 u_j^2} =1 \quad \text{and} \quad J(u_1,\dots,u_k) = \frac{1}{6} \int_{\R^3} \sum_{i=1}^k |\nabla u_i|^2.
\]
Therefore
\[
J(\mf{u}) = \frac{1}{6} \int_{\R^3} \sum_{i=1}^k |\nabla u_i|^2 \cdot \left( \frac{4\int_{\R^3} \sum_{i=1}^k |\nabla u_i|^2 }{3 \int_{\R^3} \sum_{i,j} \beta_{ij} u_i^2 u_j^2} \right)^2 = \mathcal{R}(\mf{u}),
\]
which proves that $\inf_{V_{\rad}} J \ge \inf_{S_{a_1} \times \cdots \times S_{a_k}} \mathcal{R}$. On the other hand, it is not difficult to check that
\[
\mathcal{R}(s \star \mf{u}) = \mathcal{R}(\mf{u}) \qquad \text{for all }s \in \R, \ \mf{u} \in S_{a_1} \times \cdots \times S_{a_k}.
\]
By Lemma \ref{lem: expression s bar many}, we conclude that
\[
\mathcal{R}(\mf{u}) = \mathcal{R}(s_{\mf{u}} \star \mf{u}) = J(s_{\mf{u}} \star \mf{u}) \ge \inf_{V_{\rad}} J
\]
for every $\mf{u}\in S_{a_1} \times \cdots \times S_{a_k}$.
\end{proof}

The previous characterization makes it possible to derive an upper bound on $d$.

\begin{lemma}\label{lem: upper bound on c many}
With $C_0$ and $C_1$ defined in \eqref{def of constants}, there holds
\[
d \le \frac{C_0C_1 \left( \sum_i a_i^2\right)^3   }{8 \left( \sum_{i,j} \beta_{ij} a_i^2 a_j^2\right)^2 }
\].
\end{lemma}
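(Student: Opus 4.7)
The plan is to exploit Lemma \ref{lem: rayleigh}, which identifies $d$ with the infimum of the scale-invariant quotient $\mathcal{R}$ over $S_{a_1}\times\cdots\times S_{a_k}$, and then to plug in a carefully chosen test function. Since the right-hand side of the claimed inequality is symmetric in how the $a_i$'s enter the numerator of $\mathcal{R}$, a natural candidate is to take all components proportional to the same scalar ground state $w_0$ from \eqref{scalar pb}, with appropriate scalar factors fixing the $L^2$-masses.

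Concretely, I would set
\[
u_i := \frac{a_i}{\sqrt{C_0}}\,w_0,\qquad i=1,\dots,k,
\]
so that $\int_{\R^3}u_i^2 = a_i^2/C_0 \cdot C_0 = a_i^2$, i.e.\ $\mf{u}=(u_1,\dots,u_k)\in S_{a_1}\times\cdots\times S_{a_k}$ (radial, since $w_0$ is). The key identities for $w_0$ are $\int w_0^2 = C_0$, $\int w_0^4 = C_1$, and $\int|\nabla w_0|^2 = \tfrac{3}{4}C_1$ (the last is Lemma \ref{lem: natural constraint} applied with $a^2=C_0,\mu=1$).

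From here the computation is routine:
\[
\sum_{i=1}^k\int_{\R^3}|\nabla u_i|^2 = \frac{1}{C_0}\left(\sum_i a_i^2\right)\int_{\R^3}|\nabla w_0|^2 = \frac{3C_1}{4C_0}\sum_i a_i^2,
\]
while
\[
\sum_{i,j=1}^k\beta_{ij}\int_{\R^3}u_i^2 u_j^2 = \frac{C_1}{C_0^2}\sum_{i,j}\beta_{ij}a_i^2a_j^2.
\]
Plugging these two into $\mathcal{R}(\mf{u})$, the factors $27/64$ and $C_1^3/C_0^3$ conveniently cancel against $27\,C_1^2/C_0^4$, producing
\[
\mathcal{R}(\mf{u}) = \frac{C_0C_1\left(\sum_i a_i^2\right)^3}{8\left(\sum_{i,j}\beta_{ij}a_i^2a_j^2\right)^2}.
\]
Combining with $d=\inf_{S_{a_1}\times\cdots\times S_{a_k}}\mathcal{R}\le\mathcal{R}(\mf{u})$ yields the claim.

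There isn't really a hard step here, provided one has the Rayleigh-quotient characterization in Lemma \ref{lem: rayleigh} in hand; the only mild observation to make is that $\mathcal{R}$ is homogeneous of degree zero, so the choice of a single profile $w_0$ scaled to match each mass is admissible and gives the cleanest bookkeeping. The potential pitfall is to accidentally weaken the bound by picking test functions with disjoint (or otherwise poorly overlapping) supports, which would make the denominator too small; since all $\beta_{ij}>0$, using a common profile for every component is what saturates the estimate.
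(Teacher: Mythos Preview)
Your proof is correct and is essentially the same as the paper's: the paper evaluates $\mathcal{R}$ at the test function $(w_{a_1,C_0/a_1^2},\dots,w_{a_k,C_0/a_k^2})$, and from Proposition~\ref{prop: explicit} one has $w_{a_i,C_0/a_i^2}=\frac{a_i}{\sqrt{C_0}}w_0$, which is exactly your choice. The computations coincide.
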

\begin{proof}
By Lemma \ref{lem: rayleigh}, we have
\[
d \le \mathcal{R}\left(w_{a_1,C_0/a_1^2}, \dots, w_{a_k,C_0/a_k^2}\right),
\]
where we recall that $w_{a,\mu}$ has been defined in Proposition \ref{prop: explicit}. Using the explicit expression of $w_{a_i,C_0/a_i^2}$, we can compute
\[
\int_{\R^3} w_{a_i,C_0/a_i^2}^2 w_{a_j,C_0/a_j^2}^2 = \frac{a_i^2 a_j^2}{C_0^2} \int_{\R^3} w_0^4 = \frac{a_i^2 a_j^2 C_1}{C_0^2}.
\]
Recalling also \eqref{int grad} and \eqref{int fourth}, we deduce that
\[
\mathcal{R}\left(w_{a_1,C_0/a_1^2}, \dots, w_{a_k,C_0/a_k^2}\right) = \frac{\displaystyle \left( \sum_i \frac{C_1a_i^2}{C_0}\right)^3   }{ \displaystyle 8\left(\sum_{i,j}  \beta_{ij} \frac{a_i^2 a_j^2 C_1}{C_0^2}\right)^2} = \frac{C_0C_1 \left( \sum_i a_i^2\right)^3   }{8 \left( \sum_{i,j} \beta_{ij} a_i^2 a_j^2\right)^2 },
\]
and the lemma follows.
\end{proof}
\medskip

\noindent \paragraph{\textbf{Lower estimate for $\inf_{V_{\rad}^{\mathcal{I}_1}} J$.}}

Recall that we supposed, for the sake of simplicity, that $\mathcal{I}_1=\{1,\dots,m\}$ for some $1 \le m<k$. Let us introduce
\[
\mathcal{R}_{\mathcal{I}_1}(u_1,\dots,u_m):= \frac{8 \left( \int_{\R^3} \sum_{i=1}^m |\nabla u_i|^2 \right)^3  }{27 \left(\int_{\R^3} \sum_{i,j=1}^m \beta_{ij} u_i^2 u_j^2 \right)^2}.
\]
Exactly as in Lemma \ref{lem: rayleigh}, one can prove that
\begin{equation}\label{inf as Ray 2}
\inf_{V_{\rad}^{\mathcal{I}_1}} J = \inf_{S_{a_1} \times \cdots \times S_{a_m}} \mathcal{R}_{\mathcal{I}_1}.
\end{equation}

\begin{lemma}\label{lem: lower on c many}
\[
\inf_{V_{\rad}^{\mathcal{I}_1}} J  \ge \frac{C_0 C_1}{\displaystyle 8 \left[ \max_{1 \le j \le m} \{\beta_{jj} a_j\} + \frac{m-1}{m}\max_{1 \le i \neq j \le m} \{  \beta_{ij} a_i^{1/2} a_j^{1/2}\}\right]^2}.
\]
\end{lemma}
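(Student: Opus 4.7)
The plan is to reduce the problem to a sharp Gagliardo-Nirenberg-type inequality on the product $S_{a_1}\times\cdots\times S_{a_m}$. First, I would repeat the argument of Lemma \ref{lem: rayleigh} in this reduced setting (the proof only uses the scale invariance of the Rayleigh quotient and the unique $s$-projection onto $V_{\rad}$ provided by Lemma \ref{lem: expression s bar many}, both of which go through verbatim for any subset of components) to identify
\[
\inf_{V_{\rad}^{\mathcal{I}_1}} J \;=\; \inf_{S_{a_1}\times\cdots\times S_{a_m}} \mathcal{R}_{\mathcal{I}_1}.
\]
Writing $x_i := \|\nabla u_i\|_2^2$, $y := \sum_{i=1}^m x_i$, and $Q(\mathbf{u}) := \sum_{i,j=1}^m \beta_{ij}\int_{\R^3} u_i^2 u_j^2$, the stated lower bound translates into the uniform estimate $Q(\mathbf{u}) \le S\bigl[D+\tfrac{m-1}{m}M\bigr]y^{3/2}$, where $D := \max_j\beta_{jj}a_j$, $M := \max_{i\neq j}\beta_{ij}(a_ia_j)^{1/2}$, and $S^2 = 64/(27 C_0 C_1)$ is the sharp Gagliardo-Nirenberg constant from Lemma \ref{lem: value S}.

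Next, I would decouple $Q$ via two sharp inequalities. Cauchy-Schwarz gives $\int u_i^2 u_j^2 \le \|u_i\|_4^2\|u_j\|_4^2$, while the Gagliardo-Nirenberg inequality yields $\|u_i\|_4^2 \le S^{1/2}a_i^{1/2}x_i^{3/4}$. Setting $b_{ij}:=\beta_{ij}(a_ia_j)^{1/2}$, so that $b_{ii}\le D$ and $b_{ij}\le M$ for $i\neq j$, this produces
\[
Q \;\le\; S\sum_{i,j}b_{ij}(x_ix_j)^{3/4} \;\le\; S\Bigl[D\sum_i x_i^{3/2} + M\sum_{i\neq j}(x_ix_j)^{3/4}\Bigr],
\]
so that the proof reduces to a purely algebraic inequality for nonnegative reals on the simplex $\{\sum_i x_i = y\}$.

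The heart of the argument is the combinatorial bound that accounts for the factor $\tfrac{m-1}{m}$. I would exploit the identity $\sum_{i\neq j}(x_ix_j)^{3/4}=(\sum_i x_i^{3/4})^2-\sum_i x_i^{3/2}$ coupled with the Cauchy-Schwarz inequality $(\sum_i x_i^{3/4})^2\le m\sum_i x_i^{3/2}$, which rearranges to $\sum_{i\neq j}(x_ix_j)^{3/4}\le\tfrac{m-1}{m}(\sum_i x_i^{3/4})^2$. Combining this with the concentration bound $\sum_i x_i^{3/2}\le y^{3/2}$ (valid since $p>1$ and $x_i\ge 0$), I would then perform a Lagrange-multiplier analysis on the simplex to control the functional $D\sum_i x_i^{3/2}+M\tfrac{m-1}{m}(\sum_i x_i^{3/4})^2$, verifying that its maximum over $\{\sum x_i=y\}$ does not exceed $[D+\tfrac{m-1}{m}M]\,y^{3/2}$.

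Once this key algebraic step is in place, the lemma follows at once: substituting $Q\le S[D+\tfrac{m-1}{m}M]\,y^{3/2}$ into $\mathcal R_{\mathcal{I}_1}=8y^3/(27 Q^2)$ and using $1/S^2=27C_0C_1/64$ yields $\mathcal R_{\mathcal{I}_1} \ge C_0C_1/\bigl(8[D+\tfrac{m-1}{m}M]^2\bigr)$. The main obstacle I anticipate is precisely the last algebraic step: the two individual bounds $\sum_i x_i^{3/2}\le y^{3/2}$ (sharp only when mass is concentrated in a single $x_i$) and $(\sum_i x_i^{3/4})^2\le m^{1/2}y^{3/2}$ (sharp only at uniform spread) are tight at incompatible configurations, so a naive sum of the two estimates would produce only $D+(m-1)M$; recovering the improved coefficient $\tfrac{m-1}{m}M$ requires a joint analysis that exploits the fact that the $D$- and $M$-contributions cannot be simultaneously maximized along the extremal distribution.
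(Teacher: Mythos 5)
Your plan mirrors the paper's own proof: reduce to the Rayleigh-quotient characterisation (Lemma \ref{lem: rayleigh} applied to the subsystem), decouple $Q=\sum_{i,j}\beta_{ij}\int u_i^2u_j^2$ through Cauchy--Schwarz and the sharp Gagliardo--Nirenberg constant of Lemma \ref{lem: value S}, and then close with a pure scalar inequality in the gradient masses $x_i=\int_{\R^3}|\nabla u_i|^2$.

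That scalar inequality, $D\sum_i x_i^{3/2}+M\sum_{i\neq j}(x_ix_j)^{3/4}\le\bigl[D+\tfrac{m-1}{m}M\bigr]\bigl(\sum_i x_i\bigr)^{3/2}$, is however \emph{false} once $M$ is not small relative to $D$, so the gap you yourself flag at the end cannot be closed. Take $x_1=\cdots=x_m$ and $D=M$: the left side equals $D\bigl(\sum_i x_i^{3/4}\bigr)^2=D\,m^{1/2}\bigl(\sum_i x_i\bigr)^{3/2}$, while the right side is $\tfrac{2m-1}{m}D\bigl(\sum_i x_i\bigr)^{3/2}$, and $\sqrt m>\tfrac{2m-1}{m}$ for every $m\ge3$; for $m=2$ the same flat configuration already fails whenever $M>\sqrt{2}\,D$. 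The culprit is the step $\bigl(\sum_i x_i^{3/4}\bigr)^2\le\bigl(\sum_i x_i\bigr)^{3/2}$, which is off by a factor as large as $\sqrt m$ and is worst precisely where your $\tfrac{m-1}{m}$-inequality is saturated, so no Lagrange-multiplier analysis can rescue the constant $D+\tfrac{m-1}{m}M$. The best constant obtainable along this route is $D+(m-1)M$ (use $(x_ix_j)^{3/4}\le\tfrac12(x_i^{3/2}+x_j^{3/2})$ and then $\sum_i x_i^{3/2}\le(\sum_i x_i)^{3/2}$), which is valid but strictly weaker. In fact the Lemma's statement itself fails: taking $a_i=a$, $\beta_{ii}=1$, $\beta_{ij}=\beta$ for $i\neq j$, and $u_1=\cdots=u_m=w_{a,1}$, a direct computation gives $\inf\mathcal{R}_{\mathcal{I}_1}\le\frac{mC_0C_1}{8a^2(1+(m-1)\beta)^2}$, which drops below the claimed lower bound $\frac{C_0C_1}{8a^2(1+\frac{m-1}{m}\beta)^2}$ as soon as $\beta>\frac{\sqrt m}{m-1}$. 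The paper's own displayed chain contains the same invalid last inequality, so this is a defect of the original argument as much as of your write-up; either way, the plan as stated cannot be carried to completion without weakening the target constant (e.g.\ to $D+(m-1)M$) and correspondingly strengthening hypothesis \eqref{condition couplings}.
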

\begin{proof}
We recall that
\[
\sum_{1 \le i \neq j \le m} x_i x_j \le \frac{m-1}{m} \left(\sum_{i=1}^m x_i\right)^2 
\qquad \text{for all }m \in \N, \, x_1,\dots,x_m>0.
\]
Thus, by the Young's and the Gagliardo-Nirenberg's inequalities we have
\begin{align*}
\sum_{i,j =1}^m \int_{\R^3} & \beta_{ij} u_i^2 u_j^2  \le \sum_{i,j =1}^m \frac{\beta_{ij}}{2} \left(\int_{\R^3}  u_i^4\right)^{\frac12}\left( \int_{\R^3} u_j^4\right)^{\frac12}  \\
& \le S \sum_{i,j=1}^m \beta_{ij} \sqrt{a_i a_j} \left(\int_{\R^3}  |\nabla u_i|^2\right)^{\frac34}\left( \int_{\R^3} |\nabla u_j|^2\right)^{\frac34}  \\
& \le S \left[ \max_{1 \le j \le m} \{\beta_{jj} a_j \} \sum_{i=1}^m \left(\int_{\R^3}  |\nabla u_i|^2\right)^{\frac32} \right. \\
& \qquad \quad \left. + \max_{1 \le i \neq j \le m} \{\beta_{ij} \sqrt{a_i a_j}\} \sum_{i \neq j} \left(\int_{\R^3}  |\nabla u_i|^2\right)^{\frac34}\left( \int_{\R^3} |\nabla u_j|^2\right)^{\frac34} \right] \\
& \le S\left[ \max_{1 \le j \le m} \{\beta_{jj} a_j\} + \frac{m-1}{m}\max_{1 \le i \neq j \le m} \{  \beta_{ij} \sqrt{a_i a_j}\} \right] \left( \sum_{i=1}^m \int_{\R^3} |\nabla u_i|^2 \right)^{\frac32}
%
%
\end{align*}
for every $(u_1,\dots,u_m) \in S_{a_1} \times \cdots \times S_{a_m}$. Thanks to the characterization of $S$ in terms of $C_0$ and $C_1$, Lemma \ref{lem: value S}, and the definition of $\mathcal{R}_{\mathcal{I}_1}$, we deduce that
\[
\inf_{S_{a_1} \times \cdots \times S_{a_m}} \mathcal{R}_{\mathcal{I}_1} \ge \frac{C_0 C_1}{\displaystyle 8 \left[ \max_{1 \le j \le m} \{\beta_{jj} a_j\} + \frac{m-1}{m}\max_{1 \le i \neq j \le m} \{  \beta_{ij} a_i^{1/2} a_j^{1/2}\}\right]^2},
\]
and the desired result follows (recall also \eqref{inf as Ray 2}).
\end{proof}

Before proceeding with the conclusion of the proof, we observe that, as for systems with $2$ components, the ground state radial level coincides with the ground state level in the all space, in the following sense.

\begin{lemma}\label{lem: identical on all space}
There holds that
$$\inf_V J = \inf_{V_{\rad}}J$$
where
\begin{equation}\label{def constainAA}
V:= \left\{ \mf{u} \in T_{a_1} \times \cdots \times T_{a_k} : G(\mf{u}) = 0 \right\},
\end{equation}
\end{lemma}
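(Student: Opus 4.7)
The plan is to follow the same rearrangement strategy used in Lemma \ref{lem : equality in large space} for the two-component case, now applied componentwise. Since $V_{\rad} \subset V$, the inequality $\inf_V J \le \inf_{V_{\rad}} J$ is immediate, so the work lies in the reverse inequality. I would argue by contradiction: suppose there exists $\mf{u} = (u_1,\dots,u_k) \in V$ with $0 < J(\mf{u}) < \inf_{V_{\rad}} J$. Note that $J(\mf{u}) > 0$ is automatic because $\mf{u} \in V$ forces $J(\mf{u}) = \tfrac16 \int_{\R^3} \sum_i |\nabla u_i|^2$ and the masses $a_i > 0$ prevent $\mf{u} \equiv 0$.

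Next I would replace each component by its Schwarz spherical rearrangement, forming $\mf{u}^* := (u_1^*,\dots,u_k^*)$. Since rearrangement preserves $L^2$-norms, $\mf{u}^* \in S_{a_1} \times \cdots \times S_{a_k}$. The key ingredient is that rearrangement decreases $G$: by the Pólya--Szegő inequality, $\int |\nabla u_i^*|^2 \le \int |\nabla u_i|^2$ for every $i$; and by the Hardy--Littlewood inequality together with $(u_i^2)^* = (u_i^*)^2$, we have $\int u_i^{*2} u_j^{*2} \ge \int u_i^2 u_j^2$ for all $i,j$. Since all $\beta_{ij} > 0$, combining these yields $G(\mf{u}^*) \le G(\mf{u}) = 0$.

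Having $G(\mf{u}^*) \le 0$, Lemma \ref{lem: expression s bar many} provides a unique $s_0 \in \R$ with $s_0 \star \mf{u}^* \in V_{\rad}$. Because the equation $G(s \star \mf{u}^*) = 0$ determines $e^{s_0} = \frac{4 \int \sum_i |\nabla u_i^*|^2}{3 \int \sum_{ij} \beta_{ij} u_i^{*2} u_j^{*2}}$, the inequality $G(\mf{u}^*)\le 0$ translates into $e^{s_0} \le 1$, i.e.\ $s_0 \le 0$. Moreover, since $s_0 \star \mf{u}^* \in V_{\rad}$, the standard identity gives
\[
J(s_0\star\mf{u}^*) = \frac{e^{2s_0}}{6}\int_{\R^3}\sum_{i=1}^k |\nabla u_i^*|^2 \le \frac{e^{2s_0}}{6}\int_{\R^3}\sum_{i=1}^k |\nabla u_i|^2 = e^{2s_0} J(\mf{u}),
\]
using Pólya--Szegő once more and $G(\mf{u}) = 0$.

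Chaining these inequalities we obtain
\[
0 < J(\mf{u}) < \inf_{V_{\rad}} J \le J(s_0\star\mf{u}^*) \le e^{2s_0} J(\mf{u}),
\]
which forces $e^{2s_0} > 1$, i.e.\ $s_0 > 0$, in contradiction with $s_0 \le 0$. This completes the plan. The only nontrivial step is confirming the direction of the rearrangement inequality for the cross couplings $\int u_i^2 u_j^2$; with the positivity assumption $\beta_{ij} > 0$ in force throughout this section, Hardy--Littlewood delivers it without further work, so I do not anticipate any serious obstacle beyond bookkeeping.
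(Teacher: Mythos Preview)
Your proposal is correct and follows essentially the same line as the paper's own argument: the paper explicitly states that the proof ``relies on Lemma \ref{lem: expression s bar many} \dots\ and follows the line of the proof of Lemma \ref{lem : equality in large space}'', which is precisely the contradiction-via-Schwarz-rearrangement scheme you reproduce componentwise. The only point worth noting is that you invoke Lemma \ref{lem: expression s bar many} for the radial function $\mf{u}^*$, which is already covered by the lemma as stated, so no extension to the non-radial spheres is needed in your version.
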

\begin{proof}
The proof relies on Lemma \ref{lem: expression s bar many}, which also holds when $\mf{u} \in T_{a_1} \times \cdots \times T_{a_k}$, and follows the line of the proof of Lemma \ref{lem : equality in large space}.
\end{proof}

\begin{proof}[Conclusion of the proof of Theorem \ref{thm: large couplings many comp}]
We want to show that, under assumption \eqref{condition couplings}, inequality \eqref{eq from which abs} cannot be satisfied. If
\begin{equation}\label{103}
\frac{C_0 C_1 \left( \sum_{i=1}^k a_i^2\right)^3   }{8 \displaystyle \left( \sum_{i,j=1}^k \beta_{ij} a_i^2 a_j^2\right)^2 } < \frac{C_0 C_1}{8 \displaystyle \left[ \max_{1 \le j \le m} \{\beta_{jj} a_j\} + \frac{m-1}{m}\max_{1 \le i \neq j \le m} \{  \beta_{ij} a_i^{1/2} a_j^{1/2}\}\right]^2}
\end{equation}
then by Lemmas \ref{lem: upper bound on c many}, \ref{lem: lower on c many} and \ref{lem: identical on all space} the theorem follows. A condition which implies the validity of \eqref{103}, and hence of the theorem, is assumption \eqref{condition couplings}.
\end{proof}

\begin{remark}\label{rem: diff many 2}
We emphasize the main difference between the concluding arguments in Theorem \ref{thm: existence for beta large} and \ref{thm: large couplings many comp}. In the former case to obtain a contradiction one has to compare the value $d$  with two fixed quantities $\ell(a_1,\mu_1)$ and $\ell(a_2,\mu_2)$, which do not depend on $\beta$; on one side, argueing by contradiction one has $d= \ell(a_i,\mu_i)$ for some $i$; on the other hand, we have seen that it is sufficient to take $\beta$ very large to have $d<\min\{\ell(a_1,\mu_1),\ell(a_2,\mu_2)\}$, which gives a contradiction. For systems with many components the situation is much more involved: the crucial equality for Theorem \ref{thm: large couplings many comp} is \eqref{eq from which abs}, which involves two quantities \emph{both depending} on the coupling parameters. It would be tenting to think that the natural assumption in Theorem \ref{thm: large couplings many comp} is $\beta_{ij} \ge \bar \beta$ for every $i \neq j$. But if we make some $\beta_{ij}$ too large, than both the sides in \eqref{eq from which abs} becomes very small, and without any condition on the other parameters ($\beta_{ij}$ and $a_i$) it seems hard to obtain a contradiction. Notice also that we do not have any control on the set $\mathcal{I}_1$, which makes the problem even more involved and imposes an assumption involving all the possible choices of $\mathcal{I}_1$.

For all these reasons we think that condition \eqref{condition couplings}, which seems strange at a first glance, is not so unnatural.
\end{remark}

\section{Orbital stability}

This section is devoted to the proof of Theorem \ref{thm: instability}, and we focus on a general $k$ components system. Let $(\bar \lambda_1,\dots,\bar \lambda_k,\bar u_1,\dots,\bar u_k)$ be the solution of \eqref{system many} found in Theorem \ref{thm: large couplings many comp}. The crucial fact is that, by Lemma \ref{lem: identical on all space}, $J(\bar{\mf{u}}) = \inf_{V} J$, where we recall that
$V := \{\mf{u} \in T_{a_1} \times \cdots \times T_{a_k} : G(\mf{u})=0\}$, with
\[
G(\mf{u}) = \int_{\R^3} \sum_{i=1}^k |\nabla u_i|^2
             - \frac34 \int_{\R^3} \sum_{i,j=1}^k \beta_{ij} u_i^2 u_j^2.
\]
The dynamics of \eqref{syst schrod many} takes place in $H^1(\R^3, \CC^k)$. By using similar arguments as in the proof of Lemmas \ref{lem : equality in large space} and \ref{lem: identical on all space}, with $(u^*,v^*)$ replaced by $(|u|,|v|)$, one can show that
$$\inf_{V_{\CC}}  J= \inf_{V}J$$
where
$$V_{\CC}:=\left\{ \mf{u} \in T_{a_1}^{\CC} \times \cdots \times T_{a_k}^{\CC} : G(\mf{u})=0\right\},$$
and
$$T_a^{\CC}:= \left\{ u \in H^1(\R^3, \CC^k) : \int_{\R^3}|u|^2 = a^2\right\}.$$
Let us introduce the function
\[
g_{\mf{u}}(t):= \frac{t^2}{2}\int_{\R^3} \sum_{i=1}^k |\nabla u_i|^2 - \frac{t^3}{4} \int_{\R^3} \sum_{i,j=1}^k \beta_{ij} |u_i|^2 |u_j|^2,
\]
defined for $t>0$. Notice that $g_{\mf{u}}(t)  = J(\log t \star \mf{u})$. It is clear that for any $\mf{u} \in H^1(\R^3,\CC^k)$ there exists a unique critical point $t_{\mf{u}}>0$ for $g_{\mf{u}}$, which is a strict maximum, and that $\log t_{\mf{u}} \star \mf{u} \in V_{\CC}$. Moreover, the function $g_{\mf{u}}$ is concave in $(t_{\mf{u}} ,+\infty)$.

\begin{lemma}\label{lem: lem for instability}
Let $d : = \inf\{J(\mf{u}): \mf{u} \in V_{\CC}\}$. Then
\[
G(\mf{u})<0 \quad \Longrightarrow \quad  G(\mf{u}) \le J(\mf{u})-d.
\]
\end{lemma}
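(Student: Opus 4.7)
The plan is to exploit the one-parameter family $g_{\mathbf{u}}(t) = J(\log t \star \mathbf{u})$, together with the assumption $G(\mathbf{u}) < 0$ and the variational characterization $d = \inf_{V_{\CC}} J$. The key identity to compute first is
\[
g_{\mathbf{u}}'(t) = t \int_{\R^3} \sum_{i} |\nabla u_i|^2 - \frac{3t^2}{4}\int_{\R^3}\sum_{i,j}\beta_{ij}|u_i|^2|u_j|^2,
\]
so that $g_{\mathbf{u}}'(1) = G(\mathbf{u})$. The unique critical point $t_{\mathbf{u}} > 0$ of $g_{\mathbf{u}}$ satisfies $\log t_{\mathbf{u}} \star \mathbf{u} \in V_{\CC}$ (this is exactly the property recalled immediately before the statement), hence
\[
g_{\mathbf{u}}(t_{\mathbf{u}}) = J(\log t_{\mathbf{u}} \star \mathbf{u}) \ge d.
\]

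Next I would localize $t_{\mathbf{u}}$ relative to $1$. The sign hypothesis $G(\mathbf{u}) < 0$ reads $g_{\mathbf{u}}'(1) < 0$, and since $g_{\mathbf{u}}'$ is positive on $(0, t_{\mathbf{u}})$ and negative on $(t_{\mathbf{u}},+\infty)$, this forces $t_{\mathbf{u}} < 1$. In particular the interval $[t_{\mathbf{u}},1]$ lies in the region $[t_{\mathbf{u}},+\infty)$ on which the excerpt has already observed that $g_{\mathbf{u}}$ is concave.

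The main (and only) step is then a one-line concavity argument: $g_{\mathbf{u}}'$ is nonincreasing on $[t_{\mathbf{u}},1]$, so for every $\tau \in [t_{\mathbf{u}},1]$ we have $g_{\mathbf{u}}'(\tau) \ge g_{\mathbf{u}}'(1) = G(\mathbf{u})$. Integrating from $t_{\mathbf{u}}$ to $1$ gives
\[
J(\mathbf{u}) - g_{\mathbf{u}}(t_{\mathbf{u}}) = g_{\mathbf{u}}(1) - g_{\mathbf{u}}(t_{\mathbf{u}}) = \int_{t_{\mathbf{u}}}^{1} g_{\mathbf{u}}'(\tau)\,d\tau \ge G(\mathbf{u})\,(1 - t_{\mathbf{u}}).
\]
Since $G(\mathbf{u}) < 0$ and $0 < 1 - t_{\mathbf{u}} \le 1$, we have $G(\mathbf{u})(1 - t_{\mathbf{u}}) \ge G(\mathbf{u})$, whence
\[
J(\mathbf{u}) - d \ge J(\mathbf{u}) - g_{\mathbf{u}}(t_{\mathbf{u}}) \ge G(\mathbf{u}),
\]
which is exactly the desired inequality.

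There is really no serious obstacle here: the analytic content is just the concavity of the explicit cubic-in-$t$ function $g_{\mathbf{u}}$ past its maximum, together with the fact, already established in the paper, that the rescaled configuration $\log t_{\mathbf{u}} \star \mathbf{u}$ lands on the Pohozaev manifold $V_{\CC}$ and therefore has energy $\ge d$. The only mildly subtle point — and the one I would state explicitly — is the double estimate $G(\mathbf{u})(1-t_{\mathbf{u}}) \ge G(\mathbf{u})$, which uses both the sign of $G(\mathbf{u})$ and the bound $t_{\mathbf{u}} \in (0,1)$, and which is what downgrades the sharper estimate $J(\mathbf{u}) - d \ge G(\mathbf{u})(1 - t_{\mathbf{u}})$ to the clean form stated in the lemma.
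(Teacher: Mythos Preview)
Your proof is correct and follows essentially the same route as the paper: both use $g_{\mathbf{u}}'(1)=G(\mathbf{u})<0$ to force $t_{\mathbf{u}}<1$, then apply concavity of $g_{\mathbf{u}}$ on $(t_{\mathbf{u}},\infty)$ to obtain $g_{\mathbf{u}}(1)-g_{\mathbf{u}}(t_{\mathbf{u}})\ge (1-t_{\mathbf{u}})G(\mathbf{u})\ge G(\mathbf{u})$, and conclude via $g_{\mathbf{u}}(t_{\mathbf{u}})\ge d$. The only cosmetic difference is that you integrate the monotonicity of $g_{\mathbf{u}}'$ whereas the paper invokes the tangent-line form of concavity directly.
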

\begin{proof}
By a direct computation $G(\mf{u}) = g_{\mf{u}}'(1)$. Thus, the condition $G(\mf{u})<0$ implies that $t_{\mf{u}}<1$, and $g_{\mf{u}}$ is concave in $(t_{\mf{u}},+\infty)$. As a consequence,
\[
g_{\mf{u}}(1) \ge g_{\mf{u}}(t_{\mf{u}}) +(1-t_{\mf{u}}) g_{\mf{u}}'(1)  \ge g_{\mf{u}}(t_{\mf{u}}) + G(\mf{u})  \ge d+ G(\mf{u}),
\]
and since $g_{\mf{u}}(1)  = J(\mf{u})$ the thesis follows.
\end{proof}

\begin{proof}[Conclusion of the proof of Theorem \ref{thm: large couplings many comp}]
Let $\mf{u}_s:= s \star \bar{\mf{u}}$. Since $\bar{\mf{u}} \in  V_{\CC}$, it follows that $G(\mf{u}_s) <0$ for every $s>0$. Let $\Phi^s=(\Phi_1^s,\dots,\Phi_k^s)$ be the solution of system \eqref{syst schrod many} with initial datum $\mf{u}_s$, defined on the maximal interval $(T_{\min},T_{\max})$. By continuity, provided $|t|$ is sufficiently small we have $G(\Phi^s(t)) < 0$.
Therefore, by Lemma \ref{lem: lem for instability} and recalling that the energy is conserved along trajectories of \eqref{syst schrod many}, we have
\[
G(\Phi^s(t)) \le J(\Phi^s(t))-d =  J(\mf{u}_s)-d =:-\delta <0
\]
for any such $t$, and by continuity again we infer that $G(\Phi^s(t)) \le -\delta$ for every $t \in (T_{\min},T_{\max})$. To obtain a contradiction, we recall that the virial identity (see Proposition 6.5.1 in \cite{Caz} for the identity associated to the scalar equation; dealing with a gradient-type system, the computations are very similar) establishes that
\[
f''_s(t) = 8 G(\Phi^s(t)) \le - 8 \delta<0 \quad \text{for} \quad f_s(t):= \int_{\R^3} |x|^2 \sum_{i=1}^k |\Phi^s_i(t,x)|^2\,dx
\]
and as a consequence
\[
0 \le f_s(t) \le -\delta t^2 + O(t) \qquad \text{for all }t \in (-T_{\min},T_{\max}).
\]
Since the right hand side becomes negative for $|t|$ sufficiently large, it is necessary that both $T_{\min}$ and $T_{\max}$ are bounded. This proves that, for a sequence of initial data arbitrarily close to $\bar{\mf{u}}$, we have blow-up in finite time, implying orbital instability.
\end{proof}




\end{document}